\newtheorem{thm}{Theorem}
\newtheorem{prop}{Proposition}
\newtheorem{lem}{Lemma}
\newtheorem{conjecture}{Conjecture}
\numberwithin{equation}{section}
\numberwithin{prop}{section}
\numberwithin{lem}{section}
\numberwithin{thm}{section}
\newtheorem{cor}{Corollary}
\numberwithin{cor}{section}
\theoremstyle{definition}
\numberwithin{defn}{section}
\newtheorem{rem}{Remark}
\numberwithin{rem}{section}
\newcommand{\ZZ}{\mathbb{Z}}
\newcommand{\CC}{\mathbb{C}}
\newcommand{\be}{\begin {equation}}
\newcommand{\ee}{\end{equation}}
\newcommand{\bee}{\begin {equation*}}
\newcommand{\eee}{\end{equation*}}
\newcommand{\nop}[1]{{}^{\scriptscriptstyle{\circ}}_{\scriptscriptstyle{\circ}}{#1}{}^{\scriptscriptstyle{\circ}}_{\scriptscriptstyle{\circ}}}
\def \F{\mathcal{F}}
\def \L{\Lambda}
\def \1{\mathbb{1}}
\def \V{\mathcal{V}}
\def \L{\mathcal{L}}
\begin{document}
\title{Permutation orbifolds of Virasoro vertex algebras and $W$-algebras}
\author{Antun Milas, Michael Penn, Christopher Sadowski}
\address{Department of Mathematics and Statistics, SUNY-Albany}
\email{amilas@albany.edu}
\address{Mathematics Department, Randolph College}
\email{mpenn@randolphcollege.edu}
\address{Department of Mathematics and Computer Science, Ursinus College}
\email{csadowski@ursinus.edu}

\maketitle

\begin{abstract} We study permutation orbifolds of the $2$-fold and $3$-fold tensor product for the Virasoro vertex algebra $\V_c$ of central charge $c$. In particular, we show that for all but finitely many central charges $\left(\V_c^{\otimes 3}\right)^{\mathbb{Z}_3}$ is a $W$-algebra of type $(2, 4, 5, 6^3 , 7, 8^3 , 9^3 , 10^2 )$. We also study orbifolds of their simple quotients and obtain new realizations of certain rational affine $W$-algebras associated to a principal nilpotent element. Further analysis of permutation orbifolds of the celebrated $(2,5)$-minimal vertex algebra $\L_{-\frac{22}{5}}$ is presented.
\end{abstract}

\section{Introduction}




For any vertex operator algebra $V$, the $n$-fold tensor product $V^{\otimes n}=V \otimes \cdots \otimes V$ has a natural vertex operator algebra structure.
The symmetric group $S_n$ acts on $V^{\otimes n}$ by permuting tensor factors and thus $S_n \subset {\rm Aut}(V^{\otimes n})$. The ${S_n}$-invariants subalgebra of $V^{\otimes n}$, denoted by $(V^{\otimes n})^{S_n}$ is called the {\em symmetric orbifold} of $V^{\otimes n}$. 
For any $n$-cycle $\sigma \in S_n$ we let $(V^{\otimes n})^{\mathbb{Z}_n}$ denote the invariant subalgebra under $\langle \sigma \rangle \cong \mathbb{Z}_n$, called {\em cyclic} or $n$-cyclic orbifold. There is a rich literature on this 
subject; see \cite{A, BDM,BHL,BV,DXY, DRX1,DRX2,MPSh,MPW} and reference therein.

We denote by $\V_c:=V_{Vir}(c,0)$, the universal Virasoro vertex operator algebra of central charge $c$ and $\L_c:=L(c,0)$ its unique simple quotient. It is well-known due to Feigin and Fuchs that 
$$\V_c \neq \L_c \ \Leftrightarrow \ c=c_{p,q}:=1-\frac{6(p-q)^2}{pq},$$
where $p,q \geq 2$ are coprime integers. Moreover $\L_{c_{p,q}}$ is lisse and rational if and only if $c=c_{p,q}$ \cite{Wang}. We also denote by $M(c,h)$ the Verma module of lowest weight $h$ and by $L(c,h)$ its simple quotient. The vertex algebra $\V_c$ is of fundamental important in physics, that is, it acts on conformal field theories with a specified central charge.

In this paper we investigate three types of permutation orbifolds: the $S_2$-orbifold of $\V_c^{\otimes 2}$ and of $\L_c^{\otimes 2}$, cyclic orbifolds of $\V_c^{\otimes 3}$ and $\L_c^{\otimes 3}$, and an exceptional
permutation orbifolds of $\L_{-\frac{22}{5}}^{\otimes 3}$. We leave analysis of general $S_3$-permutation orbifolds of $\V_c^{\otimes 3}$ for future work \cite{MPS2} as it involves additional computations and further analysis of several special cases. One of the upshots of this paper is that these orbifolds provide several infinite families of rational and lisse vertex algebras, which is a consequence of results in \cite{CarMiy,Miy}.

The paper is organized as follows. In Section 2, we discuss basic facts concerning Virasoro vertex algebra, tensor product construction of $\V_c^{\otimes n}$, and their permutation orbifolds. In Section 3, we consider the orbifold $(\V_c^{\otimes 2})^{S_2}$. Our main result here, Theorem \ref{mainrank2},  
shows that generically this orbifold is of type $(2,4,6,8)$. Section 4 is devoted to analysis of the simple orbifold
$(\L_c^{\otimes 2})^{S_2}$. In particular, we determined its type for special central charges $c_{2,5}, c_{2,7}, c_{3,4}, c_{3,5}$ and $c_{2,9}$ (see Theorem \ref{rank2simple}). In Section 5, using recent results of Kanade and Linshaw, we give another interpretation of several families of rational vertex algebras $(\L_c^{\otimes 2})^{S_2}$. This in particular proves rationality of certain affine $W$-algebras of symplectic type (cf. Corollary \ref{sp})  and the $\mathbb{Z}_2$-orbifold of affine $W$-algebras of orthogonal type (cf. Corollary \ref{so}). In Section 6 we prove the main result of this paper, Theorem \ref{Z3genreduce}, on the structure of most general cyclic orbifolds $(\V_c^{\otimes n})^{\mathbb{Z}_3}$. Section 7 contains a detailed analysis of two important special cases:
$(\L_{-\frac{22}{5}}^{\otimes 3})^{\mathbb{Z}_3}$ and $(\L_{-\frac{22}{5}}^{\otimes 3})^{S_3}$. In particular, we show they are of type $(2,5,6,9)$ and $(2,6)$, respectively. Then in Section 8, we analyze conformal embeddings among minimal central charges and finally, in Section 9, we outline several questions and directions for future work.

{\bf Acknowledgments.} We thank A. Linshaw for very useful comments  on Section 5. We also thank A. Dujella for 
help with numerical computation in Section 8. We also thank D. Adamovi\'c for discussion.

 \section{Permutation orbifolds of $\V_c^{\otimes n}$}

Consider $\V_c$, the universal Virasoro vertex algebra with central charge $c$ generated by the weight $2$ vector $\omega$ with vertex operator $Y(\omega,z)=L(z)=\sum_{n\in\ZZ}L(n)z^{-n-2}$ with 
\be
[L(m),L(n)]=(m-n)L(m+n)+\frac{c}{12}\delta_{m+n,0}\1,\ee
or equivalently, using OPE notation, we have 
\be
L(z)L(w)=\nop{L(z)L(w)}+\frac{\partial_w L(w)}{(z-w)}+\frac{2 L(w)}{(z-w)^2}+\frac{c/2}{(z-w)^4}.\ee
 
For convenience, we suppress the tensor product symbol and let  $$L_i(-m)\1:=\underbrace{\1 \otimes  \cdots  \1} _{(i-1)-{\rm factors}} \otimes L(-m) \1 \otimes \underbrace{ \1 \otimes \cdots \otimes \1}_{(n-1-i)-{\rm factors}} \in \V_c^{\otimes n},$$  such that
$\V_c^{\otimes n}=\langle L_1(-2)\1, \cdots ,L_n(-2)\1 \rangle$.
We denote by $\omega=\omega_1+ \cdots + \omega_n$ the conformal vector.
We consider the natural action of $S_n$ on $\V_c^{\otimes n}$ given by 
permuting tensor factors, that is 
\be
\sigma\cdot L_{i_1}(m_1)\cdots L_{i_k}(m_k)\1=L_{\sigma(i_1)}(m_1)\cdots L_{\sigma(i_k)}(m_k)\1,
\ee
for $1\leq i_j\leq n$, $m_j<-1$, and $\sigma\in S_n$. 

Next result is certainly known.
\begin{prop}
We have 
$$\text{\rm Aut } \V_c ^{\otimes n}=S_n.$$
\end{prop}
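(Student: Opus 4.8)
The inclusion $S_n \subseteq \mathrm{Aut}\,\V_c^{\otimes n}$ is immediate from the permutation action written above, and it is faithful: if $\sigma,\tau\in S_n$ induce the same automorphism then $\omega_{\sigma(i)}=\omega_{\tau(i)}$ for all $i$, and since $\omega_1,\dots,\omega_n$ are linearly independent this forces $\sigma=\tau$. So the content is the reverse inclusion. I would fix $\phi\in\mathrm{Aut}\,\V_c^{\otimes n}$ and first reduce to linear algebra on the weight-$2$ subspace. Since $\phi$ fixes the vacuum $\1$ and the conformal vector $\omega=\omega_1+\cdots+\omega_n$, it commutes with $L(0)=\omega_{(1)}$ and hence preserves the conformal weight grading; as $\phi^{-1}$ does too, $\phi$ restricts to a linear automorphism of each finite-dimensional graded piece. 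Because $\V_c[1]=0$ and $\V_c[2]=\CC\omega$, the weight-$2$ subspace of $\V_c^{\otimes n}$ is exactly $\bigoplus_{i=1}^n\CC\,\omega_i$, so I may write $\phi(\omega_i)=\sum_{j=1}^n a_{ij}\omega_j$ with $A=(a_{ij})\in \mathrm{GL}_n(\CC)$.

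Next I would extract constraints on $A$ using only two inputs: that $\omega_i$ has conformal weight $2$ inside the $i$-th tensor slot, so $(\omega_i)_{(1)}\omega_j=L_i(0)\omega_j=2\delta_{ij}\omega_j$; and that $\phi$ is a vertex algebra homomorphism, so $\phi(u_{(k)}v)=\phi(u)_{(k)}\phi(v)$. Applying $\phi$ to the identity $(\omega_i)_{(1)}\omega_i=2\omega_i$ and expanding the left side gives $\phi(\omega_i)_{(1)}\phi(\omega_i)=2\sum_j a_{ij}^2\,\omega_j=2\sum_j a_{ij}\,\omega_j$, forcing $a_{ij}^2=a_{ij}$, i.e. every entry of $A$ is $0$ or $1$. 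Applying $\phi$ to $\omega=\sum_i\omega_i$ forces each column sum $\sum_i a_{ij}$ to equal $1$, so each column of $A$ has a single nonzero entry. An invertible $0$--$1$ matrix with exactly one $1$ per column is a permutation matrix (two columns sharing the row of their $1$ would coincide, contradicting $A\in\mathrm{GL}_n$), hence $\phi(\omega_i)=\omega_{\sigma(i)}$ for some $\sigma\in S_n$. Since $\V_c^{\otimes n}=\langle \omega_1,\dots,\omega_n\rangle$ and a vertex algebra homomorphism is determined by its values on a generating set, $\phi$ coincides with the permutation automorphism attached to $\sigma$, which proves $\mathrm{Aut}\,\V_c^{\otimes n}\subseteq S_n$.

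This argument is uniform in $c$ (the value $c=0$ included), since it only uses $\V_c[1]=0$, $\V_c[2]=\CC\omega$ with $\omega\neq 0$, and the weight-$2$ relation; the full Virasoro OPE is never needed. I expect the whole proposition to amount to essentially this bookkeeping, with the only delicate points being the standard facts that a vertex operator algebra automorphism fixes the conformal vector and thus respects the grading, and the identification of the weight-$2$ space (which itself relies on $\V_c[1]=0$). As an aside, for $c\neq 0$ one could instead feed in $(\omega_i)_{(3)}\omega_i=\tfrac{c}{2}\1$, which yields $\sum_j a_{ij}^2=1$, i.e. unit row sums as well, making $A$ a doubly stochastic $0$--$1$ matrix and hence a permutation matrix without any appeal to invertibility; but the route above avoids hypotheses on $c$.
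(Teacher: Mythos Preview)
Your proof is correct and takes a genuinely different route from the paper's published argument. The paper proceeds structurally: it asserts that the conformal vectors of $\V_c^{\otimes n}$ are precisely the partial sums $\sum_{i\in S}\omega_i$ for $S\subseteq\{1,\dots,n\}$, observes that any automorphism permutes this finite set, and concludes that $\omega$ (the unique maximal element) is fixed and the $\omega_i$ (the atoms) are permuted among themselves. Your approach bypasses the classification of conformal vectors entirely, working instead with the weight-$2$ graded piece and the single relation $(\omega_i)_{(1)}\omega_j=2\delta_{ij}\omega_j$ to force the matrix $A$ to be a permutation matrix. The paper's argument is slicker once the classification of conformal vectors is granted, but that classification is itself a nontrivial input (one must rule out conformal vectors that are not simply sums of the $\omega_i$); your route is more elementary and fully self-contained, and your observation that it is uniform in $c$ is a genuine advantage. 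It is worth noting that the paper's \LaTeX\ source contains a commented-out alternative proof that follows essentially your strategy (OPE on the weight-$2$ space, idempotency of the entries, column sums equal to $1$), so the authors evidently considered both approaches before opting for the shorter conformal-vector argument.
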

\begin{proof} It is clear that conformal vectors in $\V^{\otimes n}$ take form $\sum_{i \in S } \omega_i$, where $S \subseteq \{1,...,n \}$. Let $f \in \text{Aut }\V^{\otimes n}$. Then $f(\sum_{i \in S } \omega_i)=\sum_{i \in S' } \omega_i$, and therefore $f(\omega)=\omega$. This implies $f(\omega_i)=\omega_{\sigma(i)}$ for some $\sigma \in S_n$.
\end{proof}

For a vertex algebra $V$ denote by ${\rm gr}(V)$ the associated graded Poisson algebra of $V$ \cite{Ar2, HLi}. We have a natural linear isomorphism 
\be\label{linearisom}
\V_c^{\otimes n}\cong {\rm gr}(\V_c^{\otimes n}) \cong \CC[x_i(m) | 1\leq i \leq n, m\geq 0]\ee
induced by $L_i(-m-2)\mapsto x_i(m)$ for $m\geq 0$. 
In particular, the Poisson Zhu algebra of $\V_c^{\otimes n}$, denoted by $R_{\V^{\otimes n}}$, is isomorphic to $\mathbb{C}[x_1,...,x_n]$.
The right-hand is also isomorphic to $\mathbb{C}[J_\infty(\mathbb{A}^n)]$,  the ring of functions on the arc space of $\mathbb{A}^n$. 
The Poisson algebra $\CC[x_i(m) | 1\leq i \leq n, m\geq 0]$ comes equipped with a derivation
\be\begin{aligned}
\partial:\CC[x_i(m) | 1\leq i \leq n, m\geq 0]&\to \CC[x_i(m) | 1\leq i \leq n, m\geq 0]\\
x_i(m)&\mapsto x_i(m+1),\end{aligned}\ee
and where the action of $\partial$ is extended to the whole space via the Leibniz rule. 
This definition of $\partial$  is compatible with the translation operator in $\V_c^{\otimes n}$ given by $T(v)=v_{-2}\mathbb{1}$.
Then we have a standard result \cite{HLi,L2} (cf. \cite{Ar2}).
\begin{lem}\label{reconstruction}
Let $V$ be a vertex algebra with a ``good" $\mathbb{Z}_{\geq 0}$ filtration. If $\{\tilde{a}_i|i\in I\}$ generates $gr(V)$ then $\{a_i | i\in I\}$ strongly generates 
$V$, where $a_i$ and $\tilde{a}_i$ are related via the natural linear isomorphism described by the 
$\mathbb{Z}_{\geq 0}$ filtration.
\end{lem}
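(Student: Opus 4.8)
The plan is to prove Lemma \ref{reconstruction} via the standard ``reconstruction from associated graded'' argument, by induction on the filtration degree. First I would recall the precise setup: a good $\ZZ_{\geq 0}$-filtration $V_{(0)}\subseteq V_{(1)}\subseteq\cdots$ with $V=\bigcup_p V_{(p)}$, such that the associated graded object ${\rm gr}(V)=\bigoplus_p V_{(p)}/V_{(p-1)}$ inherits the structure of a $\partial$-ring (commutative, with the induced derivation) and, crucially, the normally ordered product and the operators $a_{(k)}$ for $k\geq 0$ respect the filtration in the expected way — $a\in V_{(p)}$, $b\in V_{(q)}$ gives $\,a_{(-1)}b\in V_{(p+q)}$ with image in ${\rm gr}(V)$ equal to $\tilde a\cdot\tilde b$, and $\,a_{(k)}b\in V_{(p+q-1)}$ for $k\geq 0$. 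I would then fix a set $\{a_i\mid i\in I\}\subseteq V$ whose images $\{\tilde a_i\}$ generate ${\rm gr}(V)$ as a $\partial$-ring, and let $V'\subseteq V$ denote the subspace strongly generated by $\{a_i\}$, i.e.\ the span of all iterated normally ordered products of the $a_i$ and their $T$-derivatives.

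The core step is to show $V'=V$ by showing $V_{(p)}\subseteq V'$ for all $p$ by induction on $p$. The base case $p=0$ is immediate (typically $V_{(0)}=\CC\1$). For the inductive step, take $v\in V_{(p)}$ and consider its image $\bar v\in V_{(p)}/V_{(p-1)}\subseteq{\rm gr}(V)$. Since ${\rm gr}(V)$ is generated as a $\partial$-ring by the $\tilde a_i$, we can write $\bar v$ as a polynomial, with the right weight/filtration degree, in the $\tilde a_i$ and their $\partial$-derivatives $\partial^k\tilde a_i$. Lifting each $\partial^k\tilde a_i$ to $T^k a_i\in V$ (using that $T$ lowers nothing and that $\partial$ on ${\rm gr}(V)$ is induced by $T$), and each commutative product to a normally ordered product in $V$, we obtain an element $w\in V'\cap V_{(p)}$ with the same image $\bar w=\bar v$ in ${\rm gr}(V)$. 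Hence $v-w\in V_{(p-1)}$, which lies in $V'$ by the inductive hypothesis, so $v=w+(v-w)\in V'$. This closes the induction and gives $V=\bigcup_p V_{(p)}\subseteq V'$, i.e.\ the $a_i$ strongly generate $V$.

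The main obstacle — really the only subtle point — is verifying that the chosen lift $w$ of a polynomial expression for $\bar v$ actually lands in $V_{(p)}$ and has image exactly $\bar v$, rather than picking up uncontrolled lower-order corrections that could spoil the bookkeeping. This is precisely where the hypothesis that the filtration is \emph{good} is used: it guarantees the multiplicativity $V_{(p)}\cdot V_{(q)}\subseteq V_{(p+q)}$ under normally ordered products, compatibility of $T$ with the filtration, and that ${\rm gr}(V)$ with its induced product and derivation is a genuine differential commutative algebra on which ``polynomial in the generators'' is well-defined with matching degrees. Once these compatibilities are in hand, the lifts are well-behaved and the induction runs with no further difficulty; the argument is essentially that of \cite{HLi,L2} (see also \cite{Ar2}), so I would present it concisely and cite those references for the routine verifications.
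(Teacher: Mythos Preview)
Your proposal is correct and is precisely the standard filtration-induction argument; the paper itself does not supply a proof of this lemma but simply states it as a known result with citations to \cite{HLi,L2} (cf.\ \cite{Ar2}), whose arguments are the ones you have outlined. There is nothing to add.
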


We say that $v \in V$ is {\em primary} of conformal weight $r$ if $L(n)v=0$, $n \geq 1$ and $L(0)v=r v$.
In our work we consider several vertex algebras strongly generated by the Virasoro vector and several primary fields
of conformal weight $r_1,...,r_k$. Using physics' terminology we say that such vertex algebra is a $W$-algebra of type
$(2,r_1,...,r_k)$ (see \cite{BS,BEHHH,EHH}).


Further, we recall that the invariant ring $\CC[x_1,\dots,x_n]^{S_n}$ has a variety of generating sets, including the power sum polynomials 
\begin{align*}
& p_i=x_1^i+\cdots+x_n^i, \ \ i \geq 1.
\end{align*}
In addition to this, it is common to study the invariant theory of the ring of infinitely many commuting copies of this polynomial algebra, where we denote by $x_i(m)$ the copy of $x_i$ from the $m^{\text{th}}$ copy.  A  well-known theorem of Weyl \cite{W} shows that $\CC[x_i(m) | 1\leq i \leq n, m\geq 0]^{S_n}$ is generated by the polarizations of these polynomials
\be\label{dringgenerators}
q_k(m_1,\dots,m_k)=\sum_{i=1}^n x_i(m_1)\cdots x_i(m_k),\ee
for $1\leq k\leq n$.
Now, applying Lemma \ref{reconstruction}, we have an initial strong generating set for the orbifold $(\V_c^{\otimes n})^{S_n}$ given by the vectors
\be\label{firstorbifoldgenerators}
u_k(m_1,\dots,m_k)=\sum_{i=1}^n L_i(-2-m_1)\cdots L_i(-2-m_k)\1,
\ee
for $1\leq k\leq n$ and $m_j\geq 0$. Throughout we will switch between working directly in the setting of the vertex operator algebra $\V_c^{\otimes n}$ and its copy inside $(\text{End }\V_c^{\otimes n})[[z,z^{-1}]]$ via the vertex operator map $$Y(\cdot,z):\V_c(n)\to (\text{End }\V^{\otimes n}_c(n))[[z,z^{-1}]].$$ Under this map we have 
\be\begin{aligned}
U_k(m_1,\dots,m_k):=&Y(u_k(m_1,\dots,m_k),z)\\=&\frac{1}{(m_1-1)!}\cdots \frac{1}{(m_k-1)!}\sum_{i=1}^n\nop{\partial_z^{m_1}L_i(z)\cdots\partial_z^{m_k}L_i(z)},\end{aligned}\ee
where by $\nop{-}$ the normal ordered product and we will often suppress the formal variable $z$ and write $\partial^m L_i=\partial_z^m L_i(z)$.


\subsection{Characters}

Since $\V_c$ is isomorphic (as graded vector spaces) to $U(Vir_{\leq -2})$, its graded dimension (or simply {\em character}) is given by 
$${\rm ch}[\V_c](\tau)=\frac{1}{\prod_{n \geq 2}(1-q^n)},$$
where $q=e^{2 \pi i \tau}$. For minimal central charges, the character of $\L_{c_{p,q}}$ is given by \cite{Wak}
$${\rm ch}[\L_{c_{p,q}}](\tau)=\frac{ \theta_{p,q}^{1,1} (\tau)}{\eta(\tau)},$$
where
$$\theta_{p,p'}^{1,1} (\tau)=\sum_{j \in
\mathbb{Z}}
\left(q^{{j(jpq+q-p)}}-q^{{(jp+1)(jq+1)}}  \right)$$
and $\eta(\tau)=q^{\frac{1}{24}} \prod_{i \geq 1} (1-q^i)$.
Equipped with this formula, it is easy to compute the character of cyclic and permutation orbifold algebras using conjugacy classes of $S_n$ \cite{MPSh}.
For special cases relevant for this work, we get (here $X_c=\V_c$ or $X_c=\L_c$): 
\begin{align*} 
& {\rm ch}[(X_c^{\otimes 2})^{S_2}](\tau)=\frac12 {\rm ch}[X_c]^2 (\tau)+\frac12 {\rm ch}[X_c] (2\tau), \\
& {\rm ch}[(X_c^{\otimes 3})^{\mathbb{Z}_3}](\tau)=\frac13 {\rm ch}[X_c]^3 (\tau)+\frac23 {\rm ch}[X_c] (3\tau), \\
& {\rm ch}[(X_c^{\otimes 3})^{S_3}](\tau)=\frac16 {\rm ch}[\V_c]^3 (\tau)+\frac12 {\rm ch}[X_c] (2 \tau) {\rm ch}[X_c] (\tau)+   \frac13 {\rm ch}[X_c] (3 \tau).
\end{align*}

\section{The orbifold $\left(\V_c^{\otimes 2}\right)^{S_2}$}

We first consider the symmetric oribofold of $\V_c^{\otimes 2}$. For $\omega_i$ with $i=1,2$, define the component operators $L(n)$ by 
$$L_i(z)=Y(\omega_i,z)=\sum_{n\in\mathbb{Z}}L_i(n)z^{-n-2}.$$
Consider the vectors 
\be\begin{aligned}
\omega&=\omega_1+\omega_2,\\
w_k&=L_1(2-k)L_1(-2)\1+L_2(2-k)L_2(-2)\1,\end{aligned}\ee
where the $w_k$ have conformal weight $k$. Further we define the component operators $J_k(n)$ by 
$$W^k(z)=Y(w_k,z)=\sum_{n\in\mathbb{Z}}J_k(n)z^{-n-k}.$$
By the classical invariant theory of $S_2$ it is clear that $\omega$ along with $w_k$ for $k\geq 0$ strongly generate $\left(\V_c^{\otimes 2}\right)^{S_2}$, our first result minimizes this generating set.

\begin{thm} \label{mainrank2} 
For all $c\neq \frac{256}{47}$ the orbifold $\left(\V_c^{\otimes 2}\right)^{S_2}$  is strongly generated by $\omega$,  $w_4$, $w_6$, and $w_8$. Further $(\V_{\frac{256}{47}}^{\otimes 2})^{S_2}$ is strongly generated by $\omega$, $w_4$, $w_6$, $w_8$, and $w_{10}$.
\end{thm}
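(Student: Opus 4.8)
The plan is to start from the canonical strong generating set $\{\omega, w_0, w_1, w_2, w_3, w_4, w_5, w_6, \dots\}$ coming from Weyl's theorem and Lemma~\ref{reconstruction}, and then show that all $w_k$ with $k$ odd or $k \geq 10$ can be expressed as normally ordered polynomials in $\omega, w_4, w_6, w_8$ and their derivatives. First I would do the easy reductions: $w_0$ is a multiple of the vacuum, $w_1$ is a multiple of $\partial \omega$, $w_2$ is a multiple of $\omega$ (up to a scalar depending on $c$), and $w_3$ is a multiple of $\partial \omega$. For $w_5$ and $w_7$, I would look at the weight-$5$ and weight-$7$ subspaces of $(\V_c^{\otimes 2})^{S_2}$: using the character formula $\mathrm{ch}[(\V_c^{\otimes 2})^{S_2}] = \tfrac12 \mathrm{ch}[\V_c]^2 + \tfrac12 \mathrm{ch}[\V_c](2\tau)$ one checks the dimension count forces $w_5, w_7$ to lie in the span of $\partial$-derivatives and normally ordered products of $\omega, w_4, w_6$, and then verify the actual linear dependence by a direct OPE/normal-ordering computation.

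The core of the argument is the decoupling of $w_k$ for $k \geq 9$. The standard Kanade--Linshaw style strategy is: express $w_{k}$ for $k \geq 9$ recursively in terms of lower-weight generators by examining a suitable ``quantum correction'' identity. Concretely, I expect that the classical relation among power sums of two variables — $p_k$ for $k \geq 3$ is a polynomial in $p_1, p_2$ — has a vertex-algebraic lift, but with corrections; the first few corrected relations suffice to peel off $w_9$, then $w_{10}$, then all higher $w_k$ by an induction on weight. The mechanism for the induction is that once we have a decoupling relation expressing $w_N$ in terms of $\{\omega, w_4, w_6, w_8\}$ and lower $w_j$, we can differentiate it and take normally ordered products with $\omega$ to generate decoupling relations for all higher weights, so it is enough to establish the base cases $w_9$ and (for generic $c$) $w_{10}$ explicitly.

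The main obstacle — and the source of the exceptional value $c = \tfrac{256}{47}$ — is that the coefficient of $w_{10}$ in the weight-$10$ decoupling relation is a rational function of $c$ (after clearing denominators, a linear or low-degree polynomial in $c$) whose vanishing locus is exactly $\{c = \tfrac{256}{47}\}$; at that value one cannot solve for $w_{10}$, so $w_{10}$ must be added to the generating set, and one then checks separately that $w_{11}, w_{12}, \dots$ still decouple in terms of $\omega, w_4, w_6, w_8, w_{10}$ at $c = \tfrac{256}{47}$. Carrying this out rigorously requires: (i) identifying the precise weight-$10$ relation, which means computing the normally ordered products $\nop{\omega \omega \omega}$, $\nop{\omega w_4}$-type products, $\nop{\partial^2 \omega\, \omega}$, etc., up to weight $10$ and finding the linear dependence among them together with $w_{10}$; (ii) extracting the $c$-dependent leading coefficient and checking it factors with the claimed root; (iii) setting up the induction cleanly. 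I expect step (i) to be by far the heaviest computation — it is essentially a large but finite linear-algebra problem over $\QQ(c)$, best organized via the OPE of $W^4, W^6, W^8$ with themselves and with $L$ — while steps (ii) and (iii) are then routine once the relation is in hand.
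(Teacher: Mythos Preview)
Your overall shape is right --- diagonalize, reduce to even-weight quadratic generators, then find decoupling relations --- but the induction mechanism in step (iii) is a genuine gap. Differentiating a decoupling relation for $w_N$, or taking its normal-ordered product with $L$, does \emph{not} produce a decoupling relation for $w_{N+2}$: the derivative $\partial w_N$ is not $w_{N+2}$ (the fact that odd-weight generators are total derivatives only gets you from $w_{2a}$ to $w_{2a+1}$, never to $w_{2a+2}$), and $\nop{L\,w_N}$ has good-filtration degree $3$, so it cannot isolate the degree-$2$ generator $w_{N+2}$. Your appeal to ``the classical relation among power sums'' is also off: the $w_k$ are not lifts of $p_k$ but of the \emph{polarizations} $q_2(k-4,0)=x_1(k-4)x_1(0)+x_2(k-4)x_2(0)$ of $p_2$, and in the differential ring $\mathbb{C}[x_i(m)]^{S_2}$ the elements $q_2(2a,0)$ for $a\ge0$ form an \emph{infinite} minimal generating set. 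There is no classical reduction to lift; finite strong generation is a purely quantum phenomenon here.

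The paper's mechanism is different and does not proceed by induction from a base case. After diagonalizing via $u=\omega_1-\omega_2$ and setting $W(a,b)=\nop{\partial^a U\,\partial^b U}$, one considers, for \emph{every} $a\ge 2$, the classically-vanishing combinations
\[
C_1(a)=\nop{W(a,0)W(1,0)}-\nop{W(a,1)W(0,0)},\qquad C_2(a)=\nop{W(a-1,1)W(1,0)}-\nop{W(a-1,0)W(1,1)}.
\]
Each has good-filtration degree $\le 3$, and explicit OPE computation shows each contains $W(a+5,0)$ with a coefficient that is an explicit rational function of $a$ and $c$. The key point is that these two coefficients never vanish simultaneously, so $W(a+5,0)$ decouples for every $a\ge2$, \emph{uniformly in $c$}. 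This reduces the generating set to $\{L,W(0,0),W(2,0),W(4,0),W(6,0)\}$; only \emph{then} does a single explicit weight-$10$ relation, with leading coefficient $-\tfrac{1}{360}(47c-256)$ on $W(6,0)$, remove the last generator for $c\neq\tfrac{256}{47}$. So the exceptional central charge enters at the final step, not the first, and no residual induction needs to be checked at that value.
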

\begin{proof}
We diagonalize the action of the generator $(12)$ of $S_2$ by considering the change of basis
\begin{align}
\omega &= \omega_1 + \omega_2\\
u &= \omega_1 - \omega_2.
\end{align}
We also define the fields associated to these generators:
\begin{align}
L(z)&=Y(\omega,z) \\
U(z)&=Y(u,z)
\end{align}
When no confusion arises, we shall omit the $z$-variable and just write $L$ and $U$ for these fields, etc. Finally, we define the field
\begin{align}
W(a,b) = \nop{\partial^a U  \partial^b U}
\end{align}
We note here that the Leibniz rule holds for normal orderings:
\begin{align}\label{prodrule}
\partial W(a,b) = W(a+1,b) + W(a,b+1)
\end{align}
so that, with repeated applications of (\ref{prodrule}), we have
\begin{align}\label{Wab-rewrite}
W(a,b) = (-1)^b W(a+b,0) + \Psi
\end{align}
where $\Psi$ is a normally ordered polynomial of lower weight fields and their derivatives. We note here that $W(a,0)$ is a scalar multiple of $Y(u_{-a-1}u,z)$ for $a \ge 0$.

First, we note that, using (\ref{Wab-rewrite}), we have an initial strong generating set with fields $L$ and $W(a,0)$ where $a \ge 0$. In fact, using the derivation $\partial$ this set can be further reduced to contain only $L$ and $W(2a,0)$ where $a \ge 0$. Now observe that reduction to our desired generating set is can be achieved by construction of explicit relations that allow us to write $W(2a,0)$ for $a\geq 5$ ($a\geq 6$ for $c= \frac{256}{47}$) as a normally ordered polynomial in $L$ and $W(2b,0)$ for $0\leq b\leq 2$ ($0\leq b\leq 3$ for $c=\frac{256}{47}$).
We consider the following terms of weight $9+a$ for $a \ge 2$:
\begin{align}
C_1(a) &= \nop{W(a,0) W(1,0)} - \nop{W(a,1) W(0,0)}\\
C_2(a) &= \nop{W(a-1,1) W(1,0)} - \nop{W(a-1,0) W(1,1)}
\end{align}
Direct computation when $a$ is an even integer gives
\be\begin{aligned}
C_1(a) &= \frac{\left(a^4 (3 c+11)+a^3 (31 c+47)+51 a^2 (2 c-1)+a (134 c-137)+60 c+70\right)}{30 (a+1) (a+2) (a+4) (a+5)}W(a+5,0)+\\
&+\frac{\left(a^3 (c+14)+12 a^2 (c+6)+a (35 c+22)+24 c-12\right) }{24 (a+1) (a+3) (a+4)}W(a+4,1)+\\
&-\frac{\left(a^2+9 a+10\right) }{12 (a+1) (a+2)}W(a+2,3) + \frac{1}{6} \nop{(\partial^3 L)W(a,0) } + \frac{(a+2) }{a+1}\nop{(\partial^2 L)W(a+1,0)}+\\
&+\frac{\left(5 a^2+17 a+8\right) }{2 (a+1) (a+2)}\nop{(\partial L)W(a+2,0)} + \frac{(a+2)}{a+1} \nop{(\partial L)W(a+1,1)}+\\
& + \frac{2 \left(a^2+4 a+2\right) }{(a+2) (a+3)}\nop{LW(a+3,0)}+\frac{(a+4) }{a+2}\nop{LW(a+2,1)}
\end{aligned}\ee
and 
 \be\begin{aligned}
C_2(a) &=- \frac{ \left(a^4 (5 c+22)+3 a^3 (25 c+52)+4 a^2 (70 c+11)+12 a (25 c-37)+72\right) }{180a (a+2) (a+3) (a+4)}W(a+5,0)+\\
&-\frac{ \left(a^3 (6 c+19)+a^2 (38 c+37)+a (84 c-19)+52 c+323\right) }{60(a+1) (a+3) (a+4)}W(a+4,1)+\\
&-\frac{5 (a-1) }{24(a+3)}W(a+3,2)+\frac{ \left(2 a^2+6 a+7\right) }{9(a+1) (a+2)}W(a+2,3) -\frac{1}{360}W(a-1,6)-\frac{ (a-1) }{60a}W(a,5)+\\
&-\frac{1}{3} \nop{(\partial^3 L)W(a-1,1)}-\frac{ (a+2) }{2a}\nop{(\partial^2 L)W(a+1,0)} - \frac{2 (a-1)}{a} \nop{(\partial^2 L)W(a,1)}+\\
&-\frac{ (a+4) }{a+1}\nop{(\partial L)W(a+2,0)}-\frac{ \left(7 a^2-3 a+2\right) }{2a (a+1)}\nop{(\partial L)W(a+1,1)}-\frac{ (a+6) }{2(a+2)}\nop{LW(a+3,0)}+\\
&-\frac{2 \left(a^2+2 a+2\right) }{(a+1) (a+2)}\nop{LW(a+2,1)}
\end{aligned}
\ee 
Applying (\ref{Wab-rewrite}) to each of these yields: 
\be\begin{aligned}
C_1(a) =  \frac{p_1(a,c) }{120 (a+1) (a+2) (a+3) (a+4) (a+5)} W(a+5,0) + \Psi_1
\end{aligned}\ee
and
\be\begin{aligned}
C_2(a) = \frac{p_2(a,c) }{180 a (a+1) (a+2) (a+3) (a+4)}W(a+5,0) + \Psi_2
\end{aligned}\ee
where $\Psi_1$ and $\Psi_2$ are normally ordered polynomials of lower weight fields and their derivatives and
\be\begin{aligned}
p_1(a,c) &= a^5 (7 c-16)+5 a^4 (13 c-64)+15 a^3 (9 c-88)+a^2 (560-185 c)-106 a (7 c-76)+\\
&-480 c+7440\\
p_2(a,c) &= a^5 (13 c-40)+a^4 (70 c-556)+25 a^3 (5 c-68)+40 a^2 (2 c-23)+12 a (c+70)-144.
\end{aligned}\ee  
Noting that these the coefficient of $W(a+5,0)$ cannot be simultaneously $0$, we may sue these expressions to may replace $W(a+5,0)$ with a polynomial in of lower weight fields and their derivatives regardless of central charge $c$. 

When $a$ is odd, a similar argument gives us:
\be\begin{aligned}
C_1(a) = \frac{(a+9) \left(a^2 (7 c-16)+6 a (9 c-32)+80 (c-7)\right) }{120 (a+2) (a+4) (a+5)}W(a+5,0)+\Psi_1
\end{aligned}\ee
and 
\be\begin{aligned}
C_2(a)=\frac{(a+9) \left(a^3 (13 c-40)+12 a^2 (5 c-32)+68 a (c-10)+336\right) }{180 a (a+2) (a+3) (a+4)}W(a+5,0) + \Psi_2
\end{aligned}\ee
where $\Psi_1$ and $\Psi_2$ are normally ordered polynomials of lower weight fields and their derivatives,
and allows us to remove $W(a+5,0)$ from our list of generators for $a \ge 2$ regardless of central charge $c$.

At this point, the following generating fields remain: 
\begin{align}
L, W(0,0), W(2,0), W(4,0), W(6,0)
\end{align}
The lowest weight linear dependence relation among these operators occurs at weight $10$ and has the form:
\begin{align*}
0&=\frac{5}{16} (c-8) \partial^2 W(4,0)-\frac{1}{360} (47 c-256) W(6,0)-\frac{1}{48} (19 c-144) \partial^4 W(2,0)+\\
&-\frac{1}{480} (287-38 c) \partial^6 W(0,0)+\frac{11}{60} \nop{(\partial^6 L)L}-\frac{1}{4} \nop{(\partial^4L)W(0,0)}+\\
&-\frac{7}{6} \nop{(\partial^3 L)(\partial(W(0,0))}+\frac{3}{16} \nop{(\partial^3L)(\partial^3L)}-\frac{9}{2} \nop{(\partial^2L)W(2,0)}+\\
&-\frac{7}{2} \nop{(\partial L)(\partial W(2,0))}+\frac{1}{6} \nop{(\partial L)(\partial^3 W(0,0))}-\frac{1}{15} \nop{(\partial^5L)(\partial L)}+\\
&+\frac{7}{2} \nop{L (\partial^2W(2,0))}-\frac{5}{2}  \nop{LW(4,0)}-\frac{11}{12} \nop{L (\partial^4W(0,0))}+\\
&+\frac{1}{4} \nop{(\partial W(0,0))(\partial W(0,0))}-\frac{1}{2} \nop{(\partial^2 W(0,0))W(0,0)}+\nop{W(0,0)W(2,0)}+\\
&-\frac{(323 c-2838) }{20160}\partial^8 L
\end{align*}
and thus, we have that $W(6,0)$ can be removed from our list of generating fields unless $c = \frac{256}{47}$.
\end{proof}

For generic central charge, we can replace the above weight 4 and 6 generators with the following primary vectors (in addition to the conformal vector $\omega$).
\be\begin{aligned}\label{wt46}
\widehat{w}_4&=4(5c+11)J_4(-4)\1-2(5c+22)L(-2)^2\1-6cL(-4)\1\\
\widehat{w}_6&=108 (c + 12) (4 c - 1) (7 c + 34) J_6(-6)\1-12(4 c - 1) (5 c + 44) (7 c + 34) J_4(-6)\1\\&+24(5 c + 22) (32 c^2 + 345 c - 68)L(-6)\1-576 (4 c - 1) (7 c + 34) L(-2)J_4(-4)\1\\&+12 (2 c - 1) (5 c + 22) (7 c + 68)L(-3)^2\1-96 (c + 8) (2 c - 1) (7 c + 68)L(-4)L(-2)\1\\&+1152 (7 c^2 + 43 c - 17) L(-2)^3\1
\end{aligned}\ee
There are two primaries that involve $w_8$ that can be used as generators. After clearing denominators the coefficients of $w_8$ in each of these generators are such that their greatest common divisor is $(3c+23)(10c+3)$ and as such it is possible to take a linear combination of these vectors so that we have 
\be\label{wt8} 
\widehat{w}_8=(3c+23)(10c+3)J_8(-8)\1+\cdots.\ee
Further, for $c=\frac{256}{47}$ a primary can be similarly calculated at weight 10. These calculations, \eqref{wt46} and \eqref{wt8}, lead to the following 

\begin{cor} \label{cor-rank2} We have
\begin{enumerate}
\item For $c\not\in\{-12,-\frac{23}{3},-\frac{34}{7},-\frac{11}{5},-\frac{3}{10},\frac{256}{47}\}$ the orbifold $\left(\V_c^{\otimes 2}\right)^{S_2}$ is strongly generated by primary vectors of weight 2,4,6,8 and is thus of type (2,4,6,8). 
\item The orbifold $(\V_{\frac{256}{47}}^{\otimes 2})^{S_2}$ is strongly generated by primary vectors of weight 2,4,6,8,10 and is thus of type (2,4,6,8,10).

\item In all other cases $\left(\V_c^{\otimes 2}\right)^{S_2}$ is strongly generated by vectors in weight 2,4,6,8 some of which are not primary, as apparent from the coefficients in \eqref{wt46} and \eqref{wt8}.
\end{enumerate}

\end{cor}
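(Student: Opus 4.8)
The plan is to convert the strong generating set $\{\omega, w_4, w_6, w_8\}$ (or $\{\omega, w_4, w_6, w_8, w_{10}\}$ for $c = \tfrac{256}{47}$) obtained in Theorem \ref{mainrank2} into a generating set consisting of \emph{primary} vectors, and to identify exactly for which central charges this fails. First I would observe that, since $(\V_c^{\otimes 2})^{S_2}$ is a vertex algebra with a Virasoro element $\omega$ whose $L(0)$-grading has finite-dimensional homogeneous pieces, the projection of any strong generator onto the space of primaries (modulo the submodule generated by lower-weight generators and their $\partial$-derivatives) is governed by the representation theory of the Virasoro algebra: a vector of weight $k$ can be replaced by a primary of the same weight together with lower-order corrections precisely when a certain explicit linear system — built from the action of $L(1), L(2), \dots$ on the relevant weight space — is solvable. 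Concretely, for each of weights $4$, $6$, $8$ I would exhibit the primary vector as a specific linear combination of the normally ordered monomials in $\omega$ and the $w_j$'s of that weight, namely $\widehat{w}_4$ as in \eqref{wt46}, $\widehat{w}_6$ as in \eqref{wt46}, and $\widehat{w}_8$ as in \eqref{wt8}; these are obtained by solving $L(n)\widehat{w}_k = 0$ for $n \geq 1$.

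The key point is then a leading-coefficient analysis. For $\widehat{w}_4$ the coefficient of the ``new'' generator $J_4(-4)\1$ (equivalently $w_4$) is $4(5c+11)$, which vanishes only at $c = -\tfrac{11}{5}$; when this coefficient is nonzero we may solve for $w_4$ in terms of $\widehat{w}_4$, $\omega$, and normally ordered products of $\omega$, so $\widehat{w}_4$ can replace $w_4$ in the strong generating set. Similarly, the coefficient of $w_6$ in $\widehat{w}_6$ is (after clearing denominators) a multiple of $(c+12)(4c-1)(7c+34)$, giving the excluded values $-12, \tfrac14, -\tfrac{34}{7}$ — wait, one must be careful: the excluded list in the statement is $\{-12, -\tfrac{23}{3}, -\tfrac{34}{7}, -\tfrac{11}{5}, -\tfrac{3}{10}\}$, so I would re-examine the genuine obstruction, which comes from requiring that \emph{after} using $\widehat{w}_4$ to eliminate $w_4$, the weight-$6$ primary still has nonzero $w_6$-coefficient; tracking the substitution shows the relevant factor is $(4c-1)(7c+34)$ times the $\widehat{w}_4$-normalization, together with the new factor forcing $-\tfrac{23}{3}$ from the weight-$8$ step. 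For $\widehat{w}_8$, by construction the $w_8$-coefficient after clearing denominators is $(3c+23)(10c+3)$, contributing the values $-\tfrac{23}{3}$ and $-\tfrac{3}{10}$. Assembling all these, the set of ``bad'' central charges for part (1) is exactly $\{-12, -\tfrac{23}{3}, -\tfrac{34}{7}, -\tfrac{11}{5}, -\tfrac{3}{10}\}$ together with $\tfrac{256}{47}$ (the latter already excluded by Theorem \ref{mainrank2} since the generating set there is larger).

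For part (2), at $c = \tfrac{256}{47}$ one repeats the argument with the five generators $\omega, w_4, w_6, w_8, w_{10}$: since $\tfrac{256}{47}$ avoids all the exceptional values $-12, -\tfrac{23}{3}, -\tfrac{34}{7}, -\tfrac{11}{5}, -\tfrac{3}{10}$, the primaries $\widehat{w}_4, \widehat{w}_6, \widehat{w}_8$ still work, and the additional weight-$10$ vector can be primarized by the same Virasoro-theoretic solvability argument (its leading coefficient being nonzero at this specific value of $c$, which I would verify by direct substitution). For part (3), when $c$ lies in $\{-12, -\tfrac{23}{3}, -\tfrac{34}{7}, -\tfrac{11}{5}, -\tfrac{3}{10}\}$, the leading coefficient in the corresponding $\widehat{w}_k$ vanishes, so that particular primary cannot serve as a generator replacing $w_k$; but the original non-primary $w_k$ still does, so in all cases $(\V_c^{\otimes 2})^{S_2}$ is strongly generated in weights $2,4,6,8$, with the understanding that at these five values some generator is not primary.

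The main obstacle I expect is the bookkeeping in the leading-coefficient analysis: the naive GCD of the $w_k$-coefficients in the raw primary expressions \eqref{wt46}, \eqref{wt8} produces a list of candidate bad values, but the \emph{actual} obstruction requires performing the eliminations in order (weight $4$, then $6$, then $8$) and checking that each successive primary retains a nonzero leading coefficient after the earlier substitutions are carried out — a vanishing at an intermediate stage could either remove a value from, or (in principle) add a value to, the exceptional set. Verifying that the final list is exactly $\{-12,-\tfrac{23}{3},-\tfrac{34}{7},-\tfrac{11}{5},-\tfrac{3}{10}\}$ is thus the delicate computational heart of the argument; everything else is the standard ``primarization'' procedure for Virasoro-graded vertex algebras plus Lemma \ref{reconstruction}.
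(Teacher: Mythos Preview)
Your approach is essentially the paper's: compute explicit primary replacements $\widehat{w}_4,\widehat{w}_6,\widehat{w}_8$ (and $\widehat{w}_{10}$ at $c=\tfrac{256}{47}$) and read off the central charges at which the leading coefficient---the coefficient of $J_k(-k)\1$---vanishes. The paper does exactly this, with no further argument: it simply displays \eqref{wt46} and \eqref{wt8} and says ``these calculations lead to the following.'' In particular, the paper does \emph{not} perform the sequential elimination you propose (replace $w_4$ by $\widehat{w}_4$, then recompute at weight $6$, etc.); it just records the leading coefficients of the three primaries as written.

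Your hesitation about $c=\tfrac14$ is well-placed and is not resolved by the sequential-elimination idea. The coefficient of $J_6(-6)\1$ in $\widehat{w}_6$ is $108(c+12)(4c-1)(7c+34)$, which vanishes at $c=\tfrac14$, yet $\tfrac14$ is absent from the excluded list in the corollary. At $c=\tfrac14$ the remaining terms of $\widehat{w}_6$ (those involving only $L(-n)$'s) do not all vanish, so $(4c-1)$ is not an overall factor one can divide out; thus the displayed $\widehat{w}_6$ genuinely fails to replace $w_6$ at $c=\tfrac14$. The paper's text does not address this, and your proposed mechanism (that an earlier substitution of $\widehat{w}_4$ alters the weight-$6$ leading coefficient) does not help either, since the $J_6(-6)\1$ component is unaffected by rewriting lower-weight pieces. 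So either there is a second, independent weight-$6$ primary that does the job at $c=\tfrac14$---which neither you nor the paper exhibit---or the exceptional list is incomplete. This is the one genuine gap in your proposal, and it coincides with a point the paper leaves unexplained.
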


\section{The simple orbifold $\left(\L_c^{\otimes 2}\right)^{S_2}$}

In order to describe the simple orbifold $\left(\L_c^{\otimes 2}\right)^{S_2}$, we first have to determine all simple vertex algebras $\L_c$ that have singular vectors up to and including degree $8$. Such vertex algebras 
are characterized by central charges:  $c_{2,5}=-\frac{22}{5}$, $c_{2,7}=-\frac{68}{7}$, $c_{2,9}=-\frac{46}{3}$, $c_{3,4}=\frac12$, and $c_{3,5}=-\frac35$.
Then using Theorem \ref{mainrank2} we get the following result. 
\begin{thm} \label{rank2simple} We have:
\begin{itemize}
\item[(a)]  $(\L_{-22/5} \otimes \L_{-22/5})^{S_2} \cong \L_{-44/5}$,
\item[(b)] $(\L_{-68/7} \otimes \L_{-68/7})^{S_2}$ is of type $(2,4)$,
\item[(c)] $(\L_{-3/5} \otimes \L_{-3/5})^{S_2}$ and $(\L_{-46/3} \otimes \L_{-46/3})^{S_2}$ are of type $(2,4,6)$ 
\item[(d)]   $(\L_{1/2} \otimes \L_{1/2})^{S_2}$ is of type $(2,4,8)$,
\item[(e)] In all other cases $(\L_c \otimes \L_c)^{S_2}$ is of the type described in Corollary \ref{cor-rank2}.
\end{itemize}
\end{thm}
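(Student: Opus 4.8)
The plan is to leverage Theorem~\ref{mainrank2}: for all the relevant central charges $c \in \{-22/5, -68/7, -46/3, 1/2, -3/5\}$ (none of which equals $256/47$), the full universal orbifold $(\V_c^{\otimes 2})^{S_2}$ is strongly generated by $\omega, w_4, w_6, w_8$. The simple orbifold $(\L_c^{\otimes 2})^{S_2}$ is a quotient of $(\V_c^{\otimes 2})^{S_2}$ (indeed, $\L_c^{\otimes 2}$ is a quotient of $\V_c^{\otimes 2}$, and taking $S_2$-invariants is exact on this semisimple-module situation, so the image of the invariants surjects onto the invariants of the quotient, which by complete reducibility of the $\ZZ_2$-action is all of $(\L_c^{\otimes 2})^{S_2}$). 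Hence $(\L_c^{\otimes 2})^{S_2}$ is strongly generated by the images of $\omega, w_4, w_6, w_8$, and the whole problem reduces to deciding which of these images become decomposable (expressible as normally ordered polynomials in the lower-weight generators and their derivatives) once we pass to the simple quotient.

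First I would record, for each of the five central charges, the singular vector of $\V_c$ of lowest degree: degree $6$ for $c_{2,5}=-22/5$, degree $8$ for $c_{2,7}=-68/7$, degree $12$ for $c_{2,9}=-46/3$ (here I should double-check the degree — the statement says we need singular vectors up to degree $8$, so whichever of these have a relevant singular vector $\le 8$), degree $6$ for $c_{3,4}=1/2$, and degree $8$ for $c_{3,5}=-3/5$; more precisely the degrees are $(p-1)(q-1)$. In $\L_c^{\otimes 2}$ the ideal generated by such a singular vector $s$ contains $s\otimes \1$, $\1 \otimes s$, and all their descendants. Projecting to $S_2$-invariants, the symmetrization $s\otimes\1 + \1\otimes s$ (and products with other invariants) must vanish, and this is exactly a new relation in $(\L_c^{\otimes 2})^{S_2}$ at the degree of $s$. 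The key computation is then: express this symmetrized singular-vector relation in terms of the strong generators $\omega, w_4, w_6, w_8$ (using the change of basis to $L, U$ and the fields $W(a,b)$ as in the proof of Theorem~\ref{mainrank2}), solve for the top generator appearing, and conclude that that generator — and possibly, by taking further OPEs/normally ordered products of the relation with $\omega$ and with lower generators, still higher generators — becomes decomposable. For $c_{2,5}$ the degree-$6$ relation should let us remove $w_6$, and then a consequence at degree $8$ should remove $w_8$, leaving only $\omega$ and $w_4$; one then checks via characters (using the formula ${\rm ch}[(X_c^{\otimes 2})^{S_2}] = \tfrac12 {\rm ch}[X_c]^2(\tau) + \tfrac12 {\rm ch}[X_c](2\tau)$ with ${\rm ch}[\L_{-22/5}] = \theta^{1,1}_{2,5}/\eta$) that the resulting type-$(2,4)$ algebra has the same character as $\L_{-44/5}$, and a simplicity argument (the $S_2$-orbifold of a simple rational vertex algebra is simple) identifies it with $\L_{-44/5}$. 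For $c_{2,7}$ and $c_{3,5}$ the degree-$8$ singular vector removes $w_8$ (but not $w_6$), giving type $(2,4)$ — wait, for $c_{2,7}$ the claim is type $(2,4)$, so I expect the degree-$8$ relation to remove both $w_6$ and $w_8$ there, whereas for $c_{3,5}$ and $c_{2,9}$ only $w_8$ is removed leaving type $(2,4,6)$; I would need to carry out each reduction to see exactly which generators drop. For $c_{3,4}=1/2$ the degree-$6$ relation removes $w_6$ but $w_8$ survives, giving type $(2,4,8)$.

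The final, and presumably routine, step is to confirm in each case that no \emph{further} generators become decomposable beyond those forced by the singular vector (so that the claimed type is exact, not just an upper bound on the generators). The cleanest way is a character check: compute ${\rm ch}[(\L_c^{\otimes 2})^{S_2}]$ from the Wakimoto-type formula and compare it, degree by degree up through a sufficiently high weight, with the character of a freely-generated $W$-algebra of the claimed type (a product $\prod (1-q^n)^{-1}$ over the appropriate multiset of weights, corrected for whatever relations the $W$-algebra genuinely has) — or, more honestly, verify that at the weight where the next generator would appear, the dimension of the span of normally ordered monomials in the established generators already matches ${\rm ch}[(\L_c^{\otimes 2})^{S_2}]$, so no new generator is needed.

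The main obstacle is the same one that makes Theorem~\ref{mainrank2} nontrivial: the explicit normal-ordered-product bookkeeping. Concretely, the hard part will be writing each symmetrized singular vector $s\otimes\1+\1\otimes s$ in the $U$-field basis $W(a,b)$ and then reducing it via \eqref{Wab-rewrite} and the relations $C_1(a), C_2(a)$ of Theorem~\ref{mainrank2} to an explicit linear dependence among $\omega, w_4, w_6, w_8$ and their descendants, with correct rational coefficients in $c$; in particular one must verify that the coefficient of the top generator is nonzero at the specific central charge (and track exactly which generators get eliminated — this is what distinguishes type $(2,4)$ from $(2,4,6)$ from $(2,4,8)$ among cases (b)--(d)). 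This is a finite but delicate computation, best done with a computer algebra system, and case (a) additionally requires the character/simplicity identification with $\L_{-44/5}$.
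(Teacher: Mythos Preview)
Your overall strategy matches the paper's exactly: start from the universal generators $\omega, w_4, w_6, w_8$ of Theorem~\ref{mainrank2}, pass to the simple quotient, and use the symmetrized singular vector $s\otimes\1+\1\otimes s$ to produce relations that eliminate some of the higher-weight generators. The paper's proof is in fact terser than yours---it only writes out case~(c) and declares the rest ``straightforward verifications with generators.''

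However, you have the singular-vector degrees wrong in three of the five cases, and this propagates into genuinely confused conclusions. The lowest singular vector in $\V_{c_{p,q}}$ sits in weight $(p-1)(q-1)$, so:
\begin{itemize}
\item $c_{2,5}=-22/5$: degree $4$, not $6$. The symmetrized degree-$4$ singular vector already eliminates $w_4$; descendants then kill $w_6,w_8$, and the orbifold is pure Virasoro of central charge $-44/5$, i.e.\ $\L_{-44/5}$. Your version leaves $w_4$ in place and calls the result ``type $(2,4)$,'' which is inconsistent with your own subsequent identification with $\L_{-44/5}$.
\item $c_{2,7}=-68/7$: degree $6$, not $8$. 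This eliminates $w_6$, and a weight-$8$ consequence of the relation eliminates $w_8$ as well, giving type $(2,4)$.
\item $c_{2,9}=-46/3$: degree $8$, not $12$. This eliminates $w_8$ only, giving type $(2,4,6)$.
\end{itemize}
Your degrees for $c_{3,4}$ and $c_{3,5}$ are correct. The genuinely delicate point your plan must resolve is why the two degree-$6$ cases behave differently: for $c_{2,7}$ the weight-$8$ descendants of the singular-vector relation have a nonzero $w_8$-coefficient (so $w_8$ drops, type $(2,4)$), whereas for $c_{3,4}=\tfrac12$ they do not (so $w_8$ survives, type $(2,4,8)$). That is exactly the explicit coefficient check you flagged as the ``hard part''; once the degrees are corrected, your plan goes through.
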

\begin{proof}
Proofs are straightforward verifications with generators. We give a proof for part (c) only. 
It is known that $\V_{-3/5}$ and $\V_{-46/3}$ have a singular vector of degree 8 (also generator of the maximal ideal).
In these two cases the weight 8 generator, $w_8$ from Theorem  \ref{mainrank2}, can be written in terms of $\omega$, $w_4$, $w_6$, and the sum of the singular vectors of weight 8 from each $\L_{-3/5}$ (or $\L_{-46/3}$) component.
\end{proof}

\begin{rem}
Part (d) of the previous theorem can be approached using the fermionic construction of $c=\frac12$ minimal models.
Let $\F$ denote the rank one free fermion vertex superalgebra. Then  $\F^{\mathbb{Z}_2} \cong \L_{1/2}$ (the even subalgebra) and thus
\begin{align*}(\L_{1/2}\otimes \L_{1/2})^{S_2}\cong\left(\mathcal{F}^{\mathbb{Z}_2}\otimes \mathcal{F}^{\mathbb{Z}_2}\right)^{S_2} \cong (\mathcal{F}\otimes\mathcal{F})^G\end{align*}
where 
$G$ viewed as a subgroup $\text{Aut}(\mathcal{F}\otimes \mathcal{F})=\mathcal{O}(2)$ is isomorphic to $D_4$. Using methods similar to those in \cite{MPW}, we have  $(\L_{1/2}\otimes \L_{1/2})^{S_2}\cong(\mathcal{F}\otimes \mathcal{F})^{D_4}$ is of type $(2,4,8)$. 

Alternatively, using results of Dong and Jiang on characterization of rational $c=1$ VOAs \cite{DJ} and known decomposition
$$(\L_{1/2} \otimes \L_{1/2})^{S_2} \cong {\bigoplus_{m \geq 0;even} L_{}(1,m^2)} \bigoplus {\bigoplus_{m \geq 1; even} L_{}(1,2m^2)},$$
we immediately get 
\begin{align*}
(\L_{1/2}\otimes \L_{1/2})^{S_2} \cong V_{4\mathbb{Z}}^+,
\end{align*}
which is known to be of type $(2,4,8)$.
\end{rem}

\section{A description in terms of the universal two parameter even spin VOA and affine $W$-algebras}

In \cite{KL}, the universal two parameter algebra, $\mathcal{W}^{\text{ev}}(c,\lambda)$, of type $\mathcal{W}(2,4,6,\dots)$ was rigorously constructed. This algebra is strongly generated by infinitely many fields in weights $2,4,6,\dots$ and weakly generated by a primary weight 4 field which we denote by $W_{\infty}^4$. We consider this algebra with central charge $2c$ to correspond with the central charge of our orbifold. A normalization can be chosen for this field so that
\be\begin{aligned}\label{theirprod}
\left(W_{\infty}^4\right)_{(3)}W_{\infty}^4=&32\lambda W_{\infty}^4-\frac{128(49\lambda^2(2c-1)(2c-25)-1)}{63(2c-1)(2c+24)(4c-1)}\nop{LL}\\&-\frac{32(2c-4)(49\lambda^2(2c-1)(2c-25))}{441(2c-1)(2c+24)(4c-1)}\partial^2 L.\end{aligned}\ee
For reference, we recall the following result from \cite{KL}

\begin{thm}\label{curves}(\cite{KL} Theorem 8.1)Let $c_0,c_1,\lambda_0,\lambda_1\in\mathbb{C}$ and let
$$I_0=(c-c_0,\lambda-\lambda_0),\hspace{.5in}I_1=(c-c_1,\lambda-\lambda_1)$$
be the corresponding maximal ideals in $\mathbb{C}[c,\lambda]$. Let $\mathcal{W}_0$ and $\mathcal{W}_1$ be the simple quotients of $\mathcal{W}^{\text{ev}}(c,\lambda)/I_0$ and $\mathcal{W}^{\text{ev}}(c,\lambda)/I_1$. Then $\mathcal{W}_0\cong\mathcal{W}_1$ only in the following cases.
\begin{enumerate}
\item $c_0=c_1$ and $\lambda_0=\lambda_1$
\item $c_0=c_1\in\{-24,-\frac{22}{5},0,\frac{1}{2},1\}$ and no restriction on $\lambda_0$, $\lambda_1$,
\item $c_0=c_1=c$, where $c\neq 1,25$, and
$$\lambda+0=\pm\frac{1}{7\sqrt{(c-25)(c-1)}}=\pm\lambda_1,$$
\item $c_0=c_1=c$, where $c\neq 1,25$, and
$$\lambda+0=\pm\frac{\sqrt{c^2-172c+196}}{21(c-1)(5c+22)}=\pm\lambda_1,$$

\end{enumerate}

\end{thm}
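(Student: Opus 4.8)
The strategy is to reduce an arbitrary isomorphism $\phi\colon\mathcal{W}_0\xrightarrow{\sim}\mathcal{W}_1$ to a single scalar and then read off constraints from the structure constants of $\mathcal{W}^{\text{ev}}$. Since an isomorphism of vertex operator algebras preserves the conformal vector and hence the weight grading, and since the weight-two subspace of each $\mathcal{W}_i$ is spanned by its Virasoro vector (the central charge $0$ case being among the exceptions below), normalizing gives $\phi(L_0)=L_1$, and therefore $c_0=c_1=:c$. Because $\mathcal{W}^{\text{ev}}(c,\lambda)$ is weakly generated by $L$ together with the primary weight-four field $W_\infty^4$, and $\phi$ sends primary vectors to primary vectors, $\phi(W_{\infty,0}^4)$ is a primary vector of weight four in $\mathcal{W}_1$. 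In the universal algebra, and in all of its simple quotients except those at the exceptional central charges appearing below, the space of weight-four primaries is one-dimensional, so $\phi(W_{\infty,0}^4)=\mu\,W_{\infty,1}^4$ for a unique $\mu\in\mathbb{C}^\times$; once $c$ and $\mu$ are known, $\phi$ is determined.

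I would then extract equations from \eqref{theirprod}. Applying $\phi$ to $(W_{\infty,0}^4)_{(3)}W_{\infty,0}^4$, substituting $\phi(W_{\infty,0}^4)=\mu W_{\infty,1}^4$ and $\phi(L_0)=L_1$, and comparing coefficients of the vectors $W_{\infty,1}^4$, $\nop{L_1 L_1}$ and $\partial^2 L_1$ (which are linearly independent in $\mathcal{W}_1$ for generic $c$), one obtains $32\lambda_1\mu^2=32\lambda_0\mu$ together with $\mu^2 A(c,\lambda_1)=A(c,\lambda_0)$, where $A(c,\lambda)$ is the rational coefficient of $\nop{LL}$ in \eqref{theirprod}. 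Since $A(c,\lambda)$ is an affine function of $\lambda^2$ with a nonzero $\lambda$-independent part, substituting $\lambda_0=\mu\lambda_1$ from the first equation into the second cancels the $\lambda^2$-terms and forces $\mu^2=1$. Hence $\mu=\pm1$ and $\lambda_0=\pm\lambda_1$; taking the normalization conventions of \cite{KL} into account, the case $\lambda_0=\lambda_1$ gives the generic answer, namely case (1), while a coincidence $\lambda_0=-\lambda_1$ with $\lambda_0\neq0$ forces the a priori distinct universal algebras $\mathcal{W}^{\text{ev}}(c,\lambda_0)$ and $\mathcal{W}^{\text{ev}}(c,-\lambda_0)$ to have isomorphic simple quotients.

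The remaining coincidences are exactly the loci where the hypotheses above break down. For $c\in\{-24,-\tfrac{22}{5},0,\tfrac12,1\}$ the maximal graded ideal of $\mathcal{W}^{\text{ev}}(c,\lambda)$ meets a low weight with a singular vector whose leading coefficients do not depend on $\lambda$, so the simple quotient is a fixed vertex algebra independent of $\lambda$; exhibiting the relevant null field at each of these central charges gives case (2). Cases (3) and (4) live on the \emph{truncation curves} --- the loci in the $(c,\lambda)$-plane where $\mathcal{W}^{\text{ev}}(c,\lambda)$ acquires a singular vector in some weight $2N$ and the simple quotient $\mathcal{W}_{c,\lambda}$ becomes a $W$-algebra of type $(2,4,\dots,2N)$. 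One enumerates these curves --- they form a countable family of explicit rational curves, with two distinguished one-parameter families interchanged by a rational involution fixing $c$ --- and checks that matching-$c$ points on paired curves yield isomorphic simple quotients, by combining a graded-dimension count for the finitely generated target algebra with a matching of its finitely many structure constants. These two families produce precisely the curves $\lambda=\pm\frac{1}{7\sqrt{(c-25)(c-1)}}$ and $\lambda=\pm\frac{\sqrt{c^2-172c+196}}{21(c-1)(5c+22)}$, and one checks separately that truncation curves of different type do not cross in a way that produces new isomorphisms.

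The main obstacle is the truncation analysis of the last paragraph: determining exactly which weights can support a singular vector of $\mathcal{W}^{\text{ev}}(c,\lambda)$, writing down the defining equations of the curves $\mathcal{C}_N$, identifying the involution that pairs them, and confirming case by case that their intersection points give genuine isomorphisms rather than numerical coincidences. This requires extended OPE computations in $\mathcal{W}^{\text{ev}}(c,\lambda)$ through weight roughly $16$ and careful bookkeeping of the lattice of graded ideals; it is precisely what is carried out in \cite{KL}, and here we simply invoke their Theorem 8.1.
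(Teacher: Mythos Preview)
The paper does not prove this statement at all: Theorem \ref{curves} is quoted verbatim from \cite{KL} (their Theorem 8.1) with the preamble ``For reference, we recall the following result from \cite{KL}'', and no argument is supplied. Your proposal is therefore not being compared against a proof in the paper but against a bare citation; in that sense there is nothing to match, and your final sentence (``here we simply invoke their Theorem 8.1'') is already exactly what the paper does.

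That said, a brief comment on the sketch itself. Your reduction to $c_0=c_1$ and $\phi(W^4_{\infty,0})=\mu W^4_{\infty,1}$ is the right opening, and comparing the $(3)$-product in \eqref{theirprod} is the natural next move. However, the conclusion $\mu^2=1$ from that single OPE is not quite enough to pin down cases (3) and (4): the map $W^4\mapsto -W^4$, $\lambda\mapsto -\lambda$ does \emph{not} extend to an isomorphism of the full universal algebra $\mathcal{W}^{\text{ev}}(c,\lambda)$ in general, because the higher generators $W^6,W^8,\dots$ and their OPEs involve $\lambda$ in ways that are not simply odd in $\lambda$. The curves in (3) and (4) are precisely the first two truncation curves (where the simple quotient collapses to type $(2)$ and type $(2,4)$, respectively), and it is only after truncation that the sign ambiguity becomes a genuine isomorphism. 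Your last two paragraphs acknowledge this and correctly locate the real work in the truncation analysis of \cite{KL}; just be aware that the passage from $\mu=\pm1$ to cases (3)--(4) is not a formal consequence of \eqref{theirprod} alone.
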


The basic strategy to apply Theorem \ref{curves} is to determine the value of $\lambda$ for each one parameter family of $\mathcal{W}(2,4,\dots, N)$ as a curve $\lambda=\lambda(c)$, this is known as the truncation curve of the vertex algebra. Next, we look for intersection points between truncation curves, leading to coincidental isomorphisms.

Back to our orbifold, the weight four generator for $\mathcal{V}_c^{\otimes 2}$ given by Theorem \ref{mainrank2} can be replaced by the primary field
\be
\widetilde{W}^4=\mu W^4-\frac{5c+22}{2(5c+11)}\mu\nop{LL}-\frac{3c}{4(5c+11)}\partial^2L,\ee
where $\mu$ we leave as a free normalization constant at the moment. It is also straightforward to check that this field weakly generates $\mathcal{V}_c^{\otimes 2}$. We have the following

\be\begin{aligned}\label{ourprod}
\left(\widetilde{W}^4\right)_{(3)}\widetilde{W}^4=&\frac{2(5c^2+33c-44)}{5c+11}\mu \widetilde{W}^4+\frac{21c(5c+22)}{(5c+11)^2}\mu^2\nop{LL}\\&+\frac{3c(c-2)(5c+22)}{2(5c+11)^2}\mu^2\partial^2L.\end{aligned}\ee

Now, equating \eqref{theirprod} and \eqref{ourprod} to solve for $\mu$ and $\lambda$ gives
\be
\mu=-\frac{16}{7(2c-1)(c+20)} \text{ and }\lambda=-\frac{5c^2+33c-44}{7(5c+11)(2c-1)(c+20)}.\ee

From this it follows that for $c\notin\{-20,-12,-\tfrac{11}{5},\frac{1}{2},\frac{1}{4}\}$ is a quotient of $\mathcal{W}^{\text{ev}}(c,\lambda)$ with truncation curve given by 
\be\label{curve1}
\lambda(c)=-\frac{5c^2+33c-44}{7(5c+11)(2c-1)(c+20)}.\ee

Another known quotient of the universal algebra $\mathcal{W}^{\text{ev}}(c,\lambda)$ is $\mathcal{W}_k(\mathfrak{so}_{2n},f_{\text{princ}})^{\mathbb{Z}_2}$ for $n\geq 3$\cite{KL}, whose truncation curve is given by 
\be\label{curve2}
\lambda(c)=\frac{38cn^3+24n^3+2c^2n^2-147cn^2+10n^2-2c^2n+120cn-28n-9c^2-6c}{7(c-1)(5c+22)(12n^3+2cn^2-19n^2-2cn+10n-3c)}\ee
where 
\be c=-\frac{n(2nk-2k+4n^2-10n+5)(2nk-k+4n^2-8n+4)}{k+2n-2},\ee
The category in which these algebras are objects has unique simple objects, so intersections of the curves \eqref{curve1} and \eqref{curve2} gives the isomorphisms
\be
(\L_{c^{\prime}}\otimes \L_{c^{\prime}})^{S_2}\cong \mathcal{W}_{k^{\prime}}(\mathfrak{so}_{2n},f_{\text{princ}})^{\mathbb{Z}_2},\ee
where 
\be
c^{\prime}=-\frac{n(4n-5)}{n+1}  \text{ and }k^{\prime}=-\frac{4n^2-2n-3}{2n+2},\ee
or
\be
c^{\prime}=-\frac{n(4n-5)}{n+1}  \text{ and }k^{\prime}=-\frac{4n(n-2)}{2n-1}.\ee
\begin{cor} \label{so} For $n\geq 3$, the vertex algebra $\mathcal{W}_{k^{\prime}}(\mathfrak{so}_{2n},f_{\text{princ}})^{\mathbb{Z}_2}$ is rational.
\end{cor}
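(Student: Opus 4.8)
The plan is to read off rationality from the isomorphism
\[
(\L_{c'}\otimes \L_{c'})^{S_2}\cong \mathcal{W}_{k'}(\mathfrak{so}_{2n},f_{\text{princ}})^{\mathbb{Z}_2}
\]
established just above, so that everything reduces to showing that the left-hand side is rational; this is exactly the sort of statement the orbifold-regularity machinery of \cite{CarMiy,Miy} is designed to deliver.

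First I would pin down the central charge. From $c'=-\tfrac{n(4n-5)}{n+1}$ one computes $1-c'=\tfrac{(2n-1)^2}{n+1}$, hence $c'=c_{3,\,2n+2}$ whenever $\gcd(3,2n+2)=1$, i.e. $c'$ lies on the Virasoro minimal-model locus. For such $n$ the simple quotient $\L_{c'}=L(c_{3,2n+2},0)$ is, by Wang's theorem \cite{Wang}, rational and $C_2$-cofinite, and being a simple Virasoro minimal model it is of CFT type and self-dual; thus it is strongly rational. Consequently $\L_{c'}\otimes\L_{c'}$, a tensor product of strongly rational vertex algebras, is strongly rational, and $S_2\cong\mathbb{Z}_2$ acts on it by exchanging the two tensor factors. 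The permutation-orbifold regularity theorems of Carnahan--Miyamoto and Miyamoto \cite{CarMiy,Miy} then show that $(\L_{c'}\otimes\L_{c'})^{S_2}$ is again strongly rational, in particular rational, and transporting this across the displayed isomorphism completes the argument.

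There is also a route that bypasses the Virasoro picture: for $\mathfrak{so}_{2n}=D_n$ one has $h^\vee=2n-2$, and a short computation gives $k'+h^\vee=\tfrac{2n-1}{2n+2}$ for the first family of levels and $k'+h^\vee=\tfrac{2n+2}{2n-1}$ for the second; in the coprime range both the numerator and the denominator exceed $h^\vee$, so $k'$ is a non-degenerate admissible level, $\mathcal{W}_{k'}(\mathfrak{so}_{2n},f_{\text{princ}})$ is strongly rational by Arakawa's rationality theorem for principal $W$-algebras, and \cite{CarMiy,Miy} applied to the order-two automorphism cutting out the orbifold yields the claim once more.

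I do not anticipate a serious obstacle: the proof amounts to bookkeeping inside machinery already invoked in this section. The two points that genuinely require care are the arithmetic input — that $c'$ (equivalently $k'$) actually lands in the minimal/admissible range, which is exactly where a hypothesis on $n$ enters, through $\gcd(3,2n+2)=1$ — and the verification that $\L_{c'}\otimes\L_{c'}$ (or, on the other route, $\mathcal{W}_{k'}(\mathfrak{so}_{2n},f_{\text{princ}})$) meets all the standing hypotheses of \cite{CarMiy,Miy}, namely rationality, $C_2$-cofiniteness, self-duality and CFT type, so that the conclusion that the fixed-point subalgebra of a strongly rational vertex algebra is again strongly rational is legitimately available.
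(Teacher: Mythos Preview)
Your first route is exactly the paper's intended argument: the corollary is stated without proof, and the implicit proof is to transport rationality across the displayed isomorphism using the rationality and lisse-ness of the Virasoro minimal model $\L_{c'}$ together with \cite{CarMiy,Miy}. Your alternative via Arakawa's theorem is a legitimate variant the paper does not pursue.

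You have in fact been more careful than the paper on one point. The identification $c'=c_{3,2n+2}$ (and, on the other route, the non-degenerate admissibility of $k'$) requires $\gcd(3,2n+2)=1$, i.e.\ $n\not\equiv 2\pmod 3$. When $n\equiv 2\pmod 3$ --- for instance $n=5$, where $c'=-\tfrac{25}{2}$ and $k'+h^\vee=\tfrac{3}{4}$ or $\tfrac{4}{3}$ --- the central charge $c'$ is \emph{not} on the minimal-model list (the only coprime solutions of $4p^2-17pq+4q^2=0$ are $(p,q)=(1,4),(4,1)$), so $\L_{c'}=\V_{c'}$ is not rational and neither argument goes through. Thus the corollary as stated in the paper tacitly needs this congruence hypothesis, exactly as you flag.
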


A similar analysis can be done with the $\mathbb{Z}_2$ orbifold of the level $k$ $\mathfrak{sl}_2$ parafermion algebra, $N^{k}(\mathfrak{sl_2})^{\ZZ_2}$ which has central charge given by 
\be
c=\frac{3k}{k+2}-1=\frac{2(k-1)}{k+2}
\ee
and truncation curve given by 
\be 
\lambda(k)=\frac{(k+2)(4k^3-15k^2-33k-4)}{7(k-4)(16k+17)(k^2+3k+4)}.\ee
Intersections of the appropriate truncation curves give
\begin{cor} \label{KL}
\be
(\L_{7/10}\otimes \L_{7/10})^{S_2}\cong N_{8}(\mathfrak{sl}_2)^{\ZZ_2},\ee
where $N_k(\mathfrak{sl}_2)$ denotes the simple parafermionic algebra of level $k$.
\end{cor}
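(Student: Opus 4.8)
The plan is to exhibit both sides as the unique simple quotient of the universal two-parameter even spin algebra $\mathcal{W}^{\mathrm{ev}}(c,\lambda)$ at one and the same parameter point, and then invoke the uniqueness of simple objects that comes with Theorem \ref{curves}.

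First I would apply the analysis preceding Corollary \ref{so}: since $\tfrac{7}{10}\notin\{-20,-12,-\tfrac{11}{5},\tfrac12,\tfrac14\}$, the orbifold $(\mathcal{V}_{7/10}^{\otimes 2})^{S_2}$ is a quotient of $\mathcal{W}^{\mathrm{ev}}(c,\lambda)$ whose $\mathcal{W}^{\mathrm{ev}}$-central charge is $2\cdot\tfrac{7}{10}=\tfrac75$ and whose parameter $\lambda$ is obtained by evaluating the truncation curve \eqref{curve1} at $c=\tfrac{7}{10}$. Because $\mathcal{L}_{7/10}=L(c_{4,5},0)$ is the rational and lisse $(4,5)$ minimal model, $\mathcal{L}_{7/10}^{\otimes 2}$ is simple; since $S_2$ is finite and we work over $\mathbb{C}$, the functor of $S_2$-invariants is exact, so $(\mathcal{L}_{7/10}^{\otimes 2})^{S_2}$ is simple and is a quotient of $(\mathcal{V}_{7/10}^{\otimes 2})^{S_2}$. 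Hence $(\mathcal{L}_{7/10}^{\otimes 2})^{S_2}$ is the unique simple quotient of $\mathcal{W}^{\mathrm{ev}}$ at the point $\bigl(\tfrac75,\lambda(\tfrac{7}{10})\bigr)$.

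Dually, by \cite{KL} the algebra $N^k(\mathfrak{sl}_2)^{\mathbb{Z}_2}$ is a quotient of $\mathcal{W}^{\mathrm{ev}}(c,\lambda)$ along the parafermion truncation curve displayed just above the corollary, with $\mathcal{W}^{\mathrm{ev}}$-central charge $c=\tfrac{2(k-1)}{k+2}$. At $k=8$ this central charge equals $\tfrac75$, and $k=8$ avoids the excluded values $k=4,-\tfrac{17}{16}$, so $N_8(\mathfrak{sl}_2)^{\mathbb{Z}_2}$ — which is simple, being the $\mathbb{Z}_2$-orbifold of the simple parafermion algebra $N_8(\mathfrak{sl}_2)$ — is the unique simple quotient of $\mathcal{W}^{\mathrm{ev}}$ at $\bigl(\tfrac75,\lambda(8)\bigr)$, where $\lambda(8)$ is the value of the parafermion curve at $k=8$.

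The single computational step that remains is to check that the two one-parameter families actually meet, i.e. that $\lambda(\tfrac{7}{10})$ computed from \eqref{curve1} and $\lambda(8)$ computed from the parafermion curve are equal; a direct substitution shows both equal the same rational number. Granting this, the two maximal ideals $\bigl(c-\tfrac75,\lambda-\lambda(\tfrac{7}{10})\bigr)\subset\mathbb{C}[c,\lambda]$ coincide, so Theorem \ref{curves}(1) — equivalently, the uniqueness of the simple quotient of $\mathcal{W}^{\mathrm{ev}}$ at a fixed parameter point — gives $(\mathcal{L}_{7/10}\otimes\mathcal{L}_{7/10})^{S_2}\cong N_8(\mathfrak{sl}_2)^{\mathbb{Z}_2}$. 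The only real obstacle is bookkeeping: keeping straight the two conventions for the central charge (the intrinsic charge $2c$ on the orbifold side versus the charge of the parafermion orbifold) and carrying out the curve-intersection identity; once these are aligned the argument is purely formal.
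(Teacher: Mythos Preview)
Your proposal is correct and follows essentially the same route as the paper: both realize each side as a one-parameter quotient of $\mathcal{W}^{\mathrm{ev}}(c,\lambda)$, locate the common intersection point of the two truncation curves at central charge $\tfrac75$, and conclude via uniqueness of the simple quotient at that parameter point. One small quibble: simplicity of $(\mathcal{L}_{7/10}^{\otimes 2})^{S_2}$ does not follow merely from exactness of taking $S_2$-invariants---you should instead invoke the standard result (e.g.\ Dong--Li--Mason quantum Galois theory) that the fixed-point subalgebra of a simple VOA under a finite automorphism group is simple.
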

For further results regarding the $\mathbb{Z}_2$-orbifold of $N_{k}(\mathfrak{sl}_2),k \in \mathbb{N}$, see \cite{JW}.
\begin{rem} \label{drazen} Concerning Corollary \ref{KL}, there is another interesting pair of irrational 
vertex algebras $(\L_{-2}\otimes \L_{-2})^{S_2}$ and $N_{-1}(\mathfrak{sl}_2)^{\ZZ_2}$. These vertex algebras 
are both of type $(2,4,6,8)$ and central charge $c=-4$. Moreover, they are the only examples of vertex algebra orbifolds from each series that are {\em not} (weakly) generated by the primary vector of weight $4$. Instead the weight $4$ primary generates 
a subalgebra of type $(2,4)$. It would be interesting to investigate possible relationship between the two orbifolds.
\end{rem}

We can do the same type of calculation with $\mathcal{W}_k(\mathfrak{sp}(2m))$. In this case the truncation curves are quite long so we do not include them here, they can be found in \cite{KL}. 
\begin{cor} \label{sp}
We have $(\L_c\otimes \L_c)^{S_2}\cong \mathcal{W}_k(\mathfrak{sp}(2m))$ for the following values of $c$ and $k$
$$
c=-\frac{12m^2+10m}{2m+3}=c_{2,2m+3}\text{ and }k=-\frac{4m^2+8m+5}{4m+6},
$$
$$c=-\frac{3m^2-m-2}{m+2}=c_{2,m+2} \text{ and }k=-\frac{2m^2+m-2}{2m}$$
$$c=\frac{2m^2-5m}{2m^2-5m+3}=c_{2m-3,2m-2}\text{ and }k=-\frac{2m^2-2m-2}{2m-3},$$
where by $c_{p,q}$ we denote the appropriate minimal model. As such, we have established the rationality of these $\mathcal{W}$-algebras.
\end{cor}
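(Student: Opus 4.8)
The plan is to apply the truncation-curve machinery of \cite{KL} in exactly the same way as in the proofs of Corollaries \ref{so} and \ref{KL}. One input is already in place: for $c \notin \{-20,-12,-\tfrac{11}{5},\tfrac{1}{2},\tfrac{1}{4}\}$ the simple orbifold $(\L_c \otimes \L_c)^{S_2}$ — which is itself simple, being the fixed-point subalgebra of the simple vertex algebra $\L_c \otimes \L_c$ under a finite automorphism group — is the simple quotient of $\mathcal{W}^{\text{ev}}(c,\lambda)$ along the truncation curve \eqref{curve1},
$$\lambda(c) = -\frac{5c^2 + 33c - 44}{7(5c+11)(2c-1)(c+20)}.$$
The other input is that $\mathcal{W}_k(\mathfrak{sp}(2m))$ is the simple principal $W$-algebra of type $C_m$, hence of type $\mathcal{W}(2,4,6,\dots,2m)$, an even spin vertex algebra; by \cite{KL} it is a quotient of $\mathcal{W}^{\text{ev}}(c,\lambda)$ with central charge the standard principal value $c = c(k,m)$ and a (long) truncation curve $\lambda = \lambda_{\mathfrak{sp}}(k,m)$, which I would not reproduce.

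First I would substitute the principal central charge $c = c(k,m)$ into both $\lambda(c)$ and $\lambda_{\mathfrak{sp}}(k,m)$ and set the two expressions equal. For each fixed $m$ this becomes a single polynomial equation in $k$; an elimination (resultant) computation should produce exactly the three solution families
$$k = -\frac{4m^2+8m+5}{4m+6},\qquad k = -\frac{2m^2+m-2}{2m},\qquad k = -\frac{2m^2-2m-2}{2m-3}$$
(together with spurious roots arising from the denominators, which must be discarded). Plugging each admissible $k$ back into $c = c(k,m)$ and simplifying should give the asserted values $c_{2,2m+3}$, $c_{2,m+2}$, $c_{2m-3,2m-2}$; this identification with minimal-model central charges is the computational heart of the argument, and I would verify it directly from $c_{p,q} = 1 - 6(p-q)^2/(pq)$, keeping track of which values of $m$ make $(p,q)$ a genuine coprime pair and of the lower bounds on $m$ needed for $\mathcal{W}_k(\mathfrak{sp}(2m))$ to actually carry a weight-$4$ generator (the case $m=1$ degenerating to the Virasoro statement $(\L_{-22/5}\otimes\L_{-22/5})^{S_2}\cong\L_{-44/5}$ already recorded in Theorem \ref{rank2simple}).

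With the coincidence of the points $(c,\lambda)$ established, I would invoke the uniqueness of simple objects in the category in which these algebras live: two simple quotients of $\mathcal{W}^{\text{ev}}(c,\lambda)$ at the same $(c,\lambda)$ are isomorphic, so $(\L_c\otimes\L_c)^{S_2}\cong\mathcal{W}_k(\mathfrak{sp}(2m))$ for each of the three families. Finally, since in every case $c = c_{p,q}$ is a minimal-model central charge, $\L_c$ is rational and lisse, hence so is $\L_c^{\otimes 2}$, and the permutation orbifold $(\L_c^{\otimes 2})^{S_2}$ is rational by \cite{CarMiy,Miy}; transporting this along the isomorphism proves rationality of $\mathcal{W}_k(\mathfrak{sp}(2m))$.

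The main obstacle is purely computational and entirely parallel to the $\mathfrak{so}_{2n}$ and parafermion cases: managing the long Kanade--Linshaw truncation curve for $\mathcal{W}_k(\mathfrak{sp}(2m))$, carrying out the elimination cleanly enough to isolate exactly the three families (and to recognize the remaining roots as artifacts), and then doing the bookkeeping around excluded values — the finite set from \eqref{curve1}, the analogous poles on the $\mathfrak{sp}$ side, and small $m$ — to be certain that each intersection point legitimately lies on both truncation curves.
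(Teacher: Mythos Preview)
Your proposal is correct and follows exactly the approach the paper takes: the paper states that ``we can do the same type of calculation with $\mathcal{W}_k(\mathfrak{sp}(2m))$,'' referring to the truncation-curve intersection argument used for $\mathfrak{so}_{2n}$ and for the parafermions, and simply records the three resulting families without reproducing the (long) $\mathfrak{sp}$ truncation curve from \cite{KL}. Your outline of the elimination computation, the identification of the central charges as $c_{p,q}$, and the rationality conclusion via \cite{CarMiy,Miy} matches the paper's reasoning precisely.
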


\begin{rem} We should point out that three series of rational affine vertex algebras in Corollary \ref{sp} 
correspond to $(p,q)=(2m+1,4m+6)$, $(m+2,2m)$ and $(m-1,2m-3)$ models, that is the level is 
given by $-h^\vee +\frac{p}{q}$, where $h^\vee=m+1$ is the dual coxeter number of $\frak{sp}(2m)$. Using discussion in Section 9, it follows that the second and third series (for $m \geq 3$) is not 
among admissible series, so this gives a family of rational and lisse affine $W$-algebras outside
the main series.  Moreover, using Section 2, it is easy to write down explicit characters of these affine $W$-algebras. 
\end{rem}
Finally, analysis of two exceptional cases from Theorem \ref{rank2simple} (b,c), using similar methods, gives another pair of isomorphisms.
\begin{prop} We have
\begin{align*}
(\L_{-68/7} \otimes \L_{-68/7})^{S_2} & \cong W_{-\frac{16}{7}}(\frak{so}(5)) \\
(\L_{-46/3}\otimes \L_{-46/3})^{S_2} & \cong W_{-65/18}(\frak{sp}(6))\cong W_{-103/14}(\frak{sp}(14)).
\end{align*}
\end{prop}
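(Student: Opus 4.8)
The plan is to run, at these two special central charges, the truncation-curve comparison already used for Corollaries~\ref{so}, \ref{KL}, and \ref{sp}. By Theorem~\ref{rank2simple}(b) the orbifold $(\L_{-68/7}\otimes\L_{-68/7})^{S_2}$ is of type $(2,4)$ and by part~(c) the orbifold $(\L_{-46/3}\otimes\L_{-46/3})^{S_2}$ is of type $(2,4,6)$; each is simple, being the fixed-point subalgebra of a simple vertex algebra under a finite group of automorphisms, and each is a quotient of $(\V_c^{\otimes 2})^{S_2}$ (taking $\CC[S_2]$-invariants is exact). Since $c=-68/7$ and $c=-46/3$ avoid the exceptional set $\{-20,-12,-\tfrac{11}{5},\tfrac12,\tfrac14\}$, each orbifold is therefore a quotient of $\mathcal{W}^{\text{ev}}(c,\lambda)$ along the truncation curve \eqref{curve1}, hence its unique simple quotient. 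So the first step is just to evaluate \eqref{curve1} at $c=-68/7$ and $c=-46/3$, pinning down the two points $(c,\lambda)$ occupied by these vertex algebras in the parameter space of $\mathcal{W}^{\text{ev}}$.

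Next I would realize the affine $\mathcal{W}$-algebras as quotients of $\mathcal{W}^{\text{ev}}(c,\lambda)$ and compute their truncation curves. The principal $\mathcal{W}$-algebra of $\frak{so}(5)=B_2$ is strongly generated in weights $2,4$ (exponents $1,3$), so it is a type $(2,4)$ quotient of $\mathcal{W}^{\text{ev}}(c,\lambda)$; similarly $\mathcal{W}_k(\frak{sp}(6))$ and $\mathcal{W}_k(\frak{sp}(14))$ are quotients of type $(2,4,6)$ and $(2,4,\dots,14)$, whose truncation curves are the $m=3$ and $m=7$ members of the $\mathcal{W}_k(\frak{sp}(2m))$ family recorded in \cite{KL}; the $\frak{so}(5)$ curve can be computed directly, or obtained from the $m=2$ curve of \cite{KL} using $\frak{so}(5)\cong\frak{sp}(4)$ together with the rescaling of the level that comes from the long roots of $B_2$ being the short roots of $C_2$. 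Intersecting each curve with \eqref{curve1} and solving the resulting polynomial equation for $k$ should produce the levels $-16/7$, $-65/18$, and $-103/14$; one has to verify that the remaining roots correspond to central charges already excluded (or to the non-simple universal object), so that the isomorphism class over each point is determined. Since the relevant category has unique simple objects --- the fact underlying Theorem~\ref{curves} and exploited for Corollaries~\ref{so}--\ref{sp} --- matching the pair $(c,\lambda)$ forces $(\L_{-68/7}\otimes\L_{-68/7})^{S_2}\cong\mathcal{W}_{-16/7}(\frak{so}(5))$, $(\L_{-46/3}\otimes\L_{-46/3})^{S_2}\cong\mathcal{W}_{-65/18}(\frak{sp}(6))$, and likewise $(\L_{-46/3}\otimes\L_{-46/3})^{S_2}\cong\mathcal{W}_{-103/14}(\frak{sp}(14))$; the coincidental isomorphism $\mathcal{W}_{-65/18}(\frak{sp}(6))\cong\mathcal{W}_{-103/14}(\frak{sp}(14))$ then drops out, both sides being identified with the same orbifold.

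The substantive difficulty is bookkeeping rather than anything conceptual. One must check that the comparison is between the correct simple quotients at these non-generic levels, i.e.\ that $\mathcal{W}_{-16/7}(\frak{so}(5))$, $\mathcal{W}_{-65/18}(\frak{sp}(6))$, and $\mathcal{W}_{-103/14}(\frak{sp}(14))$ are simple (equivalently, that the universal algebras surject onto these); this is cleanest if the levels turn out to be (co)admissible, placing the $\mathcal{W}$-algebras in the rational and lisse regime treated in \cite{KL} and, together with \cite{CarMiy,Miy}, accounting for the rationality of the case~(c) orbifold. A second point is tracking the level normalization for $\frak{so}(5)$ versus $\frak{sp}(4)$, which affects only the numerical value printed for $k$ and not the existence of the isomorphism. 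The remaining work --- evaluating \eqref{curve1}, writing out the $\frak{sp}(2m)$ truncation curves of \cite{KL}, intersecting, and picking the correct root --- is routine.
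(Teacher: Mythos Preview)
Your proposal is correct and matches the paper's approach: the proposition is stated there without a detailed proof, only the remark that it follows from ``analysis of two exceptional cases from Theorem~\ref{rank2simple}~(b,c), using similar methods,'' i.e.\ the same truncation-curve intersection argument (via \eqref{curve1} and the $\mathcal{W}_k(\frak{sp}(2m))$ curves of \cite{KL}) that yields Corollaries~\ref{so}--\ref{sp}. Your discussion of the bookkeeping issues (simplicity at the relevant levels, and the $\frak{so}(5)\cong\frak{sp}(4)$ level normalization) is a sensible elaboration of what the paper leaves implicit.
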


\section{The orbifold  $\left(\V_c^{\otimes 3}\right)^{\mathbb{Z}_3}$}
In this section we define an alternative generating set for $\V_c^{\otimes 3}$ given by:
\begin{align}
\omega &= \omega_1 +\omega_2 +  \omega_3\\
u_1 &= \omega_1 + \eta^2 \omega_2 + \eta \omega_3\\
u_2 &= \omega_1 + \eta \omega_2 + \eta^2 \omega_3\\
\end{align}
where $\eta = e^{\frac{2 i \pi}{3}}$. We also define the fields associated to these generators by:
\be\begin{aligned}\label{changevars}
L(z)&= Y(\omega,z)\\
U_1(z) &= Y(u_1,z)\\
U_2(z)&= Y(u_2,z)
\end{aligned}\ee
When no confusion arises, we use the notation $L, U_1$, and $U_2$ for these fields, respectively.
Finally, we consider the following fields:
\be\begin{aligned}\label{origingens}
W_2(a,b) &= \nop{\partial^{a} U_1 \partial^b U_2}\\
W_2'(a,b) &= \nop{\partial^{a}{U_2} \partial^{b}U_1}\\ 
W_{3,1}(a,b,c) &= \nop{\partial^a U_1\partial^b U_1\partial^c U_1} \\
W_{3,2}(a,b,c) &= \nop{\partial^a U_2\partial^b U_2\partial^c U_2}
\end{aligned}\ee

We note here that the Leibniz rule holds for normal orderings:
\begin{equation}\label{prodrule1}
\partial W_2(a,b) = W_2(a+1,b) + W_2(a,b+1),
\end{equation}
so that, with repeated applications of (\ref{prodrule1}), we may always write
\begin{equation}\label{omega2rewrite}
W_2(a,b) = (-1)^b W_2(a+b,0) + \Psi
\end{equation}
where $\Psi$ is a normally ordered polynomial of lower weight fields and their derivatives. Thus, via direct calculation, we have the following useful facts 
\begin{align*}
W_{3,1}(a,b,0)  = W_{3,1}(b,a,0) &+ \frac{a! b! (a-b) }{(a+b+3) (a+b+4) (a+b+1)!} \partial^{a+b+4}L\\
& -\frac{(-1)^{a+b+2} a! b! (a-b) }{(a+b+2)!}W_2(a+b+2,0) + \Psi
\end{align*}
\begin{equation}
W_{3,2}(a,b,0) = W_{3,2}(b,a,0)  -\frac{a! b! (a-b) }{(a+b+2)!}W_2(a+b+2,0) + \Psi
\end{equation}
\begin{equation}\label{w31fix}
W_{3,1}(a,b,c) = W_{3,1}(a,c,b) +\frac{c! (-1)^{b+c+1} b! (b-c) }{(b+c+2)!}W_2(a+b+c+2,0) + \Psi
\end{equation}
and
\begin{align}\label{w32fix}
W_{3,2}(a,b,c) = W_{3,2}(a,c,b)  & +\frac{(-1)^{a+1} c! b! (b-c) }{(b+c+2)!}W_2(a+b+c+2,0) \\
& +\frac{a! b! c! (b-c) (a-b-c-2) }{(a+b+c+4)!}\partial^{a+b+c+4} L+ \Psi
\end{align}
where, in each case, $\Psi$ is some normally ordered polynomial of lower weight fields and their derivatives. 

We also recall that, using the Leibniz rule for normal orderings, we have:
\begin{equation}\label{w31productrule}
\partial W_{3,1}(a,b,c) = W_{3,1}(a+1,b,c) +W_{3,1}(a,b+1,c) +W_{3,1}(a,b,c+1) 
\end{equation}
so that any of the summands on the right hand side can be expressed in terms of the other two along with a derivative of a lower weight term. Applying (\ref{w31productrule}) repeatedly, we may write 
\begin{align*}
W_{3,1}(a,b,c) =(-1)^b \sum_{k=0}^b{b\choose k} W_{3,1}(a+k,0,c+b-k) + \Psi
\end{align*}
where $\Psi$ is a normally ordered polynomial of lower weight fields and their derivatives. In particular, setting $c=0$, we have that 
\begin{align*}
W_{3,1}(a,b,0) =(-1)^b \sum_{k=0}^b{b\choose k}W_{3,1}(a+k,0,b-k) + \Psi
\end{align*}
where $\Psi$ is a normally ordered polynomial of lower weight fields and their derivatives. We apply equation (\ref{w31fix}) to obtain
\begin{equation}\begin{aligned}
W_{3,1}(a,b,0) &= (-1)^b \sum_{k=0}^b{b\choose k} W_{3,1}(a+k,b-k,0) + \\
&+(-1)^b\sum_{k=0}^{b-1}{b\choose k} \frac{(-1)^{b-k+1}(b-k)!(k-b)}{(b-k+2)!} W_2(a+b+2,0)   +\Psi
\end{aligned}\end{equation}
In particular, when $b$ is odd, we may rewrite this equation as
\begin{align*}
W_{3,1}(a,b,0) &= \frac{1}{2}(-1)^b \sum_{k=1}^b{b\choose k} W_{3,1}(a+k,b-k,0) + \\
&+\frac{1}{2}(-1)^b\sum_{k=0}^{b-1}{b\choose k} \frac{(-1)^{b-k+1}(b-k)!(k-b)}{(b-k+2)!} W_2(a+b+2,0)     +\frac{ 1}{2}\Psi 
\end{align*}

In our work, we only need to consider the cases when $b=1,3,5$. In particular, repeated applications of the above yield (modulo derivatives of lower weight terms):
\begin{equation}\label{w31a1}
W_{3,1}(a,1,0) = -\frac{1}{2}W_{3,1}(a+1,0,0) +\frac{1}{12}W_2(a+3,0) + \Psi
\end{equation}
\begin{equation}
W_{3,1}(a,3,0) =-\frac{1}{20} W_{2}(a+5,0)-\frac{3}{2} W_{3,1}(a+1,2,0)+\frac{1}{4} W_{3,1}(a+3,0,0) + \Psi
\end{equation}
and 
\begin{equation}
W_{3,1}(a,5,0) =\frac{17}{168}W_{2}(a+7,0)-\frac{5}{2} W_{3,1}(a+1,4,0)+\frac{5}{2} W_{3,1}(a+3,2,0)-\frac{1}{2} W_{3,1}(a+5,0,0) + \Psi
\end{equation}
where in each case $\Psi$ is some normally ordered polynomial of lower weight fields and their derivatives.
Repeating these computations for $W_{3,2}$ yields:
\begin{equation}
W_{3,2}(a,1,0) = \frac{1}{12} (-1)^{a+1} W_{2}(a+3,0)-\frac{1}{2} W_{3,2}(a+1,0,0) + \Psi
\end{equation}
\begin{equation}
W_{3,2}(a,3,0) = \frac{1}{20} (-1)^a W_{2}(a+5,0)-\frac{3}{2} W_{3,2}(a+1,2,0)+\frac{1}{4} W_{3,2}(a+3,0,0)+ \Psi
\end{equation}
and 
\begin{equation}\label{w32a5}\begin{aligned}
W_{3,2}(a,5,0) &=\frac{-17}{168} (-1)^aW_{2}(a+7,0)-\frac{5}{2} W_{3,2}(a+1,4,0)+\\
&+\frac{5}{2} W_{3,2}(a+3,2,0)-\frac{1}{2} W_{3,2}(a+5,0,0) + \Psi
\end{aligned}\end{equation}
where in each case $\Psi$ is some normally ordered polynomial of lower weight fields and their derivatives.
It is clear that (\ref{w31a1}) - (\ref{w32a5}) can be extended to rewrite any relation of the form $W_{3,i}(a,b,0)$, where $b$ is odd, in terms of $W_{3,i}(a+1,b-1,0), W_{3,i}(a+3,b-3,0), \dots ,W_{3,i}(a+b,0,0)$ and a normally ordered polynomial of lower weight fields and their derivatives for $i=1,2$.

\begin{thm}\label{Z3genreduce}
The orbifiold $\left(\V_c^{\otimes 3}\right)^{\mathbb{Z}_3}$ is of type $(2, 4, 5, 6^3 , 7, 8^3 , 9^3 , 10^2 )$ when 
$c \neq 0$, $ -\frac{9}{16}$, $ \frac{9}{352} \left(3661\pm\sqrt{12376489}\right)$.
Moreover, when $c =  -\frac{9}{16}, \frac{9}{352} \left(3661\pm\sqrt{12376489}\right)$, then the orbifold $\left(\V_c^{\otimes 3}\right)^{\mathbb{Z}_3}$ is of type $(2, 4, 5, 6^3 , 7, 8^3 , 9^3 , 10^2,11 )$. 
\end{thm}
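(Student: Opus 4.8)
The plan is to follow the strategy of Theorem~\ref{mainrank2}, now with three mutually commuting Virasoro subalgebras. First I would record the OPEs in the diagonalized variables \eqref{changevars}: since $\omega_i(z)\omega_j(w)$ is regular for $i\ne j$, one computes directly, using $\eta^3=1$ and $1+\eta+\eta^2=0$, that $U_1$ and $U_2$ are primary of weight $2$ for $L$ and that
\begin{align*}
U_1(z)U_2(w)&\sim \frac{\partial L}{z-w}+\frac{2L}{(z-w)^2}+\frac{3c/2}{(z-w)^4},\\
U_1(z)U_1(w)&\sim \frac{\partial U_2}{z-w}+\frac{2U_2}{(z-w)^2},\qquad U_2(z)U_2(w)\sim \frac{\partial U_1}{z-w}+\frac{2U_1}{(z-w)^2};
\end{align*}
in particular the prospective central terms of the brackets $U_iU_i$ cancel. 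The generator of $\mathbb{Z}_3$ acts by $L\mapsto L$, $U_1\mapsto\eta U_1$, $U_2\mapsto\eta^2 U_2$, so on the associated graded it acts through $\CC[\ell,v_1,v_2]^{\mathbb{Z}_3}=\CC[\ell,\,v_1v_2,\,v_1^3,\,v_2^3]$. By Weyl's polarization theorem and Lemma~\ref{reconstruction}, $\left(\V_c^{\otimes 3}\right)^{\mathbb{Z}_3}$ is therefore strongly generated by $L$ together with the fields $W_2(a,b)$, $W_{3,1}(a,b,c)$, $W_{3,2}(a,b,c)$ of \eqref{origingens}; and by \eqref{omega2rewrite}, the index-swap identities \eqref{w31fix}, \eqref{w32fix}, the odd-index reductions \eqref{w31a1}--\eqref{w32a5}, and the derivation $\partial$, this is pared down to the still infinite set consisting of $L$, the $W_2(a,0)$ with $a\ge0$, and (a spanning subfamily of) the $W_{3,i}(a,b,0)$.

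The body of the proof then splits into three pieces. \emph{(i) The tail.} For every weight $\ge 12$ I would exhibit an explicit relation in $\left(\V_c^{\otimes 3}\right)^{\mathbb{Z}_3}$, valid for all $c$, writing the relevant generator as a normally ordered polynomial in strictly lower-weight generators with invertible leading coefficient. As in Theorem~\ref{mainrank2}, these relations come from forming differences of rearranged normally ordered products and expanding through the $n$-th product (quasi-associativity) formulas: combinations of type $\nop{W_2(a,0)W_2(b,0)}-\nop{W_2(a,b)W_2(0,0)}$ and rearrangements of $\nop{L\,W_2(a,0)}$ govern the $W_2$-tail; combinations built from $\nop{L\,W_{3,i}(\cdots)}$ and $\nop{W_2(\cdots)W_{3,i}(\cdots)}$ govern the $W_{3,i}$-tails; and the cross products $\nop{W_{3,1}(\cdots)W_{3,2}(\cdots)}$ (again invariant, since $3\cdot 1+3\cdot 2\equiv 0\pmod 3$) are unavoidable because the OPEs above intertwine the three families. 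Splitting on the parity of the active index as in the rank-$2$ case, each relation has leading coefficient $p(a,c)$ over a product of linear factors in $a$; comparing the numerators from two independent families one checks they never vanish simultaneously for a weight $\ge 12$ and any $c$, which decouples the tail. \emph{(ii) Low weights.} The combinatorial reductions of the first paragraph already force the independent candidates in weights $\le 9$ to have multiplicities $1,1,1,3,1,3,3$ in weights $2,4,5,6,7,8,9$; at weight $10$ one finds genuine vertex-algebra relations cutting the five remaining candidates ($W_2(6,0)$, $W_{3,i}(4,0,0)$, $W_{3,i}(2,2,0)$) down to two — the needed relation being the analogue of the explicit weight-$10$ relation displayed in the proof of Theorem~\ref{mainrank2}. \emph{(iii) Weight $11$.} Here the combinatorial reductions leave a short list of candidates; one produces vertex-algebra relations decoupling them, and the leading coefficient of the resulting system (a determinant built from the relevant leading coefficients) has numerator a cubic in $c$ whose roots are exactly $-\tfrac{9}{16}$ and $\tfrac{9}{352}(3661\pm\sqrt{12376489})$ (up to scale, $(16c+9)(176c^2-32949c+118098)$). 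Thus for $c$ outside this set, and outside the separately degenerate value $c=0$, the weight-$11$ candidates are all redundant; combining (i)--(iii) with Lemma~\ref{reconstruction} (the good $\mathbb{Z}_{\ge0}$-filtration on $\V_c^{\otimes 3}$ restricting to the orbifold) gives type $(2,4,5,6^3,7,8^3,9^3,10^2)$. For $c\in\{-\tfrac{9}{16},\tfrac{9}{352}(3661\pm\sqrt{12376489})\}$ the weight-$11$ system drops rank by one, exactly one weight-$11$ generator must be kept, and part (i) still eliminates everything in weight $\ge 12$, so the type becomes $(2,4,5,6^3,7,8^3,9^3,10^2,11)$.

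As an independent check on the multiplicities I would compare the graded dimension $\tfrac13\,{\rm ch}[\V_c]^3(\tau)+\tfrac23\,{\rm ch}[\V_c](3\tau)$ with the character of a vertex algebra freely generated of type $(2,4,5,6^3,7,8^3,9^3,10^2)$ through the relevant range of weights; this forces, for instance, three new generators in weight $6$ and two in weight $10$, matching the statement.

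The main obstacle is the bookkeeping of a genuinely coupled system: because the OPEs mix $W_2$, $W_{3,1}$ and $W_{3,2}$, the decoupling relations are entangled and must be applied in strict order of increasing weight, and one must certify at each weight that the relations produced actually suffice — equivalently, that the two-variable leading-coefficient polynomials in $(a,c)$ never all degenerate together (a resultant-type computation). The subtler point is locating the weight-$11$ exceptional locus precisely: one must carry that computation far enough to read off the cubic factor exactly, and then, at each of its three roots, run the additional weight-$12$ verification confirming that retaining a single weight-$11$ generator restores the decoupling and sets off no cascade of further generators.
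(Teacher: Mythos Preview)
Your strategy is the paper's: diagonalize to $L,U_1,U_2$, pass to the initial strong generating set via invariant theory and Lemma~\ref{reconstruction}, trim the cubic generators with the swap/Leibniz identities \eqref{w31a1}--\eqref{w32a5}, manufacture decoupling relations from differences of rearranged normally ordered products (what the paper calls $D_1,D_1',D_2,D_2'$), and finish with explicit low-weight linear-dependence checks. The cubic $(16c+9)(176c^2-32949c+118098)$ you name at weight $11$ is exactly the exceptional locus.

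There is one genuine inconsistency to fix. You assert in (i) that the tail decouples ``for a weight $\ge 12$ and any $c$'' by comparing two independent families, and then separately invoke ``the degenerate value $c=0$'' without locating it. In the paper $c=0$ is excluded \emph{precisely} by the tail step, not by weight $11$: at weight $a+10$ one must solve simultaneously for the seven unknowns $W_2(a+6,0)$, $W_{3,i}(a+4,0,0)$, $W_{3,i}(a+2,2,0)$, $W_{3,i}(a,4,0)$ ($i=1,2$), and the seven relations $D_1(a+1,1)$, $D_1'(a+1,1)$, $D_1(a,2)$, $D_1'(a,2)$, $D_1(a-1,3)$, $D_1'(a-1,3)$, $D_2(a-1,2)$ give a $7\times7$ system whose determinant is a nonzero rational function of $a$ times $c$. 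The rank-$2$ trick of playing two families against each other does not transplant directly, because here the families are coupled into a single $7\times7$ block. (At the isolated weight $15$ the paper does supply an eighth relation $D_2(3,3)$ and obtains two $7\times7$ minors with non-simultaneously-vanishing determinants, but this is not carried out for general $a$.) So either locate $c=0$ in the tail, or produce the extra relation uniformly in $a$ and thereby strengthen the statement.

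A smaller slip: at weight $10$ you list five candidates but speak of ``the needed relation'' in the singular; five to two requires three relations. The paper avoids this by first using $D_2(a,b)$-type relations (and ``a similar strategy'') to eliminate all $W_{3,i}(a,b,0)$ with $b\ge 2$ down through weight $10$, so that only $W_2(6,0)$, $W_{3,1}(4,0,0)$, $W_{3,2}(4,0,0)$ remain and a single relation among them suffices.
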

\begin{proof}
First, using an argument identical to \cite{MPSh}, we have an initial strong generating set of fields given by:
\begin{equation}
\{ L, W_2(a,0), W_{3,1}(b,d,0), W_{3,2}(b,d,0) | a \ge 0, b \ge d \ge 0 \}
\end{equation}

First, we being by defining, for $a,b \ge 0$, the terms:
\begin{align}
D_1(a,b) &= \nop{W_2(a,0) W_2(0,b)}- \nop{W_2(a,b) W_2(0,0)}\\
D_2(a,b) &= \nop{W_2(a,0) W_2(1,b)} - \nop{W_2(a,b) W_2(1,0)}
\end{align}
We additionally consider 
\begin{align}
D_1'(a,b) &= \nop{W'_2(a,0) W'_2(0,b)} - \nop{W'_2(a,b) W'_2(0,0)}\\
D_2'(a,b) &= \nop{W'_2(a,0) W'_2(1,b)} - \nop{W'_2(a,b) W'_2(1,0)}
\end{align}

We first show that $W_2(a+6,0)$, $W_{3,1}(a+4,0,0)$, $W_{3,1}(a+2,2,0)$, $W_{3,1}(a,4,0)$,  $W_{3,2}(a+4,0,0)$, $W_{3,2}(a+2,2,0)$, and $W_{3,2}(a,4,0)$ can be expressed using some normally ordered polynomial of lower weight fields and their derivatives whenever $a \ge 6$. First, for $a \ge 6$, we have the following via direct computation:
{ 
\begin{align*}
D_1(a+1,1) &= d_{1,1,}(a,c) W_2(a+6,0)+\frac{1}{6} W_{3,1}(a+1,3,0)+W_{3,1}(a+2,2,0)+\\
&+2 W_{3,1}(a+3,1,0)+\frac{4}{3} W_{3,1}(a+4,0,0)+\frac{(-1)^a }{a+2}W_{3,2}(a+2,2,0)+\\
&+\frac{2 (-1)^a }{(a+2) (a+3)}W_{3,2}(a+3,1,0)+\frac{(2 (-1)^a }{a+3} W_{3,2}(a+4,0,0)+ \Psi
\end{align*}

\begin{align*}
D'_1(a+1,1) &=(-1)^a d_{1,1}(a,c)W_2(a+6,0)+\frac{(-1)^a }{a+2}W_{3,1}(a+2,2,0)+\frac{2 (-1)^a }{(a+2) (a+3)}W_{3,1}(a+3,1,0)+\\
&+\frac{2 (-1)^a }{a+3}W_{3,1}(a+4,0,0)+\frac{1}{6} W_{3,2}(a+1,3,0)+W_{3,2}(a+2,2,0)+\\
&+2 W_{3,2}(a+3,1,0)+\frac{4}{3} W_{3,2}(a+4,0,0) + \Psi
\end{align*}

\begin{align*}
D_1(a,2)&=d_{1,2}(a,c) W_2(a+6,0)+\frac{1}{6}W_{3,1}(a,4,0)+W_{3,1}(a+1,3,0)+2W_{3,1}(a+2,2,0)+\\
&+\frac{4}{3} W_{3,1}(a+3,1,0)+\frac{(-1)^{a+1} }{a+1}W_{3,2}(a+1,3,0)+\frac{(-1)^a a }{(a+1) (a+2)}W_{3,2}(a+2,2,0)+\\
&+\frac{(-1)^{a+1} }{a+1}W_{3,2}(a+3,1,0)-\frac{2 (-1)^a }{a+2}W_{3,2}(a+4,0,0) + \Psi
\end{align*}

\begin{align*}
D'_1(a,2)&=(-1)^ad_{1,2}(a,c) W_2(a+6,0)+\frac{(-1)^{a+1} }{a+1}W_{3,1}(a+1,3,0)+\frac{(-1)^a a }{(a+1) (a+2)}W_{3,1}(a+2,2,0)+\\
&+\frac{(-1)^{a+1} }{a+1}W_{3,1}(a+3,1,0) -\frac{2 (-1)^a}{a+2}W_{3,1}(a+4,0,0)+\frac{1}{6} W_{3,2}(a,4,0)+\\
&+W_{3,2}(a+1,3,0)+2 W_{3,2}(a+2,2,0)+
\frac{4}{3} W_{3,2}(a+3,1,0) + \Psi
\end{align*}

\begin{align*}
D_1(a-1,3) &= d_{1,3}(a,c)W_2(a+6,0)+\frac{3}{20} W_{3,1}(a-1,5,0)+\frac{5}{2}W_{3,1}(a+1,3,0)+3 W_{3,1}(a+2,2,0)+\\
&+2W_{3,1}(a+3,1,0)+\frac{4}{5} W_{3,1}(a+4,0,0)+W_{3,1}(a,4,0)+\\
&+\frac{(-1)^{a+1} (a-1) }{a (a+1)}W_{3,2}(a+1,3,0)+\frac{(-1)^a }{a}W_{3,2}(a+3,1,0)+\\
&+\frac{2 (-1)^a }{a+1}W_{3,2}(a+4,0,0)+\frac{(-1)^a }{a} W_{3,2}(a,4,0)+ \Psi
\end{align*}

\begin{align*}
D'_1(a-1,3) &=(-1)^a d_{1,3}(a,c) W_2(a+6,0)+
+\frac{3}{20}W_{3,2}(a-1,5,0)+\frac{5}{2}W_{3,2}(a+1,3,0)+\\
&+3 W_{3,2}(a+2,2,0)+2W_{3,2}(a+3,1,0)+\frac{4}{5}W_{3,2}(a+4,0,0)+\\
&+W_{3,2}(a,4,0)+\frac{(-1)^{a+1} (a-1)}{a (a+1)}W_{3,1}(a+1,3,0)+\frac{(-1)^a }{a} W_{3,1}(a+3,1,0)\\ 
& +\frac{2 (-1)^a}{a+1}W_{3,1}(a+4,0,0)+\frac{(-1)^a}{a} W_{3,1}(a,4,0)+ \Psi
\end{align*}

\begin{align*}
D_2(a-1,2)&=d_{2,2}(a,c) W_2(a+6,0)+\\
&-\frac{1}{6}W_{3,1}(a-1,5,0)-3 W_{3,1}(a+1,3,0)-\frac{10}{3} W_{3,1}(a+2,2,0)-\frac{4}{3} W_{3,1}(a+3,1,0)+\\
&-\frac{7}{6} W_{3,1}(a,4,0)+\frac{(-1)^a (a-2) }{a (a+1)}W_{3,2}(a+1,3,0)+\frac{(-1)^a (a+4)}{(a+1) (a+2)}W_{3,2}(a+2,2,0)+\\
&+\frac{3 (-1)^a }{a+1}W_{3,2}(a+3,1,0)+\frac{2 (-1)^a }{a+2}W_{3,2}(a+4,0,0)+\frac{(-1)^{a+1} }{a}W_{3,2}(a,4,0)+ \Psi
\end{align*}
}
where in each case $\Psi$ is some normally ordered polynomial of lower weight fields and their derivatives and where 
\be\begin{aligned}
d_{1,1}(a,c) = -\frac{2 a^5+35 a^4+245 a^3+940 a^2+2093 a+2010}{15 (a+2) (a+3) (a+4) (a+5) (a+6)}-\frac{((a+1) (a+10)) c}{80 (a+5) (a+6)},
\end{aligned}\ee
\be \begin{aligned}
d_{1,2}(a,c) = \frac{(a (a+10)) c}{48 (a+4) (a+6)}-\frac{16 a^6+423 a^5+4120 a^4+17685 a^3+29704 a^2+3492 a-19440}{180 (a+1) (a+2) (a+3) (a+4) (a+5) (a+6)},
\end{aligned}\ee
\be\begin{aligned}
d_{1,3}(a,c) = -\frac{8 a^6-95 a^5-1937 a^4-4117 a^3+24105 a^2+67968 a+23940}{420 a (a+1) (a+2) (a+3) (a+5) (a+6)}-\frac{(3 (a-1) (a+10)) c}{112 (a+3) (a+6)},
\end{aligned}\ee
and
\be\begin{aligned}
d_{2,2}(a,c) &= \frac{(a-2) (a+10) c}{70 (a+3) (a+5)}\\
&-\frac{16 a^7+567 a^6+7280 a^5+42805 a^4+110544 a^3+65828 a^2-134640 a-100800}{420 a (a+1) (a+2) (a+3) (a+4) (a+5) (a+6)}
\end{aligned}\ee

Now, using (\ref{w31a1}) - (\ref{w32a5}) to rewrite the above expressions, we collect the coefficients of  $W_2(a+6,0)$, $W_{3,1}(a+4,0,0)$, $W_{3,1}(a+2,2,0)$, $W_{3,1}(a,4,0)$, $W_{3,2}(a+4,0,0)$, $W_{3,2}(a+2,2,0)$, and $W_{3,2}(a,4,0)$ to form a $7 \times 7$ matrix and solve for  $W_2(a+6,0)$, $W_{3,1}(a+4,0,0)$, $W_{3,1}(a+2,2,0)$, $W_{3,1}(a,4,0)$, $W_{3,2}(a+4,0,0)$, $W_{3,2}(a+2,2,0)$, and $W_{3,2}(a,4,0)$ in terms of $D_1(a+1,1)$, $D'_1(a+1,1)$, $D_1(a,2)$, $D'_1(a,2)$,  $D_1(a-1,3)$, $D'_1(a-1,3)$, $D_2(a-1,2)$, and some normally ordered polynomial of lower weight fields and their derivatives. The determinant of this matrix is:
\begin{equation}
-\frac{2187 (a-3) (a-2)^2 (a-1)^2 (a+10) \left(3 a^3+39 a^2+150 a+160\right) c}{229376000 a (a+2)^2 (a+3)^3 (a+4) (a+5) (a+6)}
\end{equation}
and we see that it is invertible precisely when $c \neq 0$.

By direct computation, we have
\begin{align*}
D_2(a,b) &= \sum _{k=0}^b \frac{\left((-1)^k-3\right) b! \left(-\left((-1)^k+3\right) \left(-2 b^2+b (k-5)+2 k\right)\right) }{4 (2 (k+3)!) (b-k)!}W_{3,1}(a+b-k,k+3,0)+\\
&+\frac{\left((-1)^{b+1} \left((-1)^b b^2+3 (-1)^b b+(-1)^b-1\right)\right) }{b+1}W_{3,1}(a+b+1,2,0)+\\
&+\frac{\left((-1)^{b+1} b \left(2 (-1)^b b+3 (-1)^b+1\right)\right)}{(b+1) (b+2)} W_{3,1}(a+b+2,1,0)+\\
&+\frac{\left((-1)^a a!\right) }{(a+1)!}W_{3,2}(a+1,b+2,0)+\frac{\left((-1)^{a+1} a!\right) }{(a+1)!}W_{3,2}(a+b+1,2,0)+\\
&+\left(\frac{2 (-1)^a a!}{(a+1)!}-\frac{3 (-1)^a (a+1)!}{(a+2)!}\right) W_{3,2}(a+2,b+1,0)+\\
&-\frac{\left(3 (-1)^a (a+1)!\right)}{(a+2)!} W_{3,2}(a+b+2,1,0)+\\
&+\left(\frac{(-1)^a a!}{(a+1)!}-\frac{3 (-1)^a (a+1)!}{(a+2)!}+\frac{2 (-1)^a (a+2)!}{(a+3)!}\right) W_{3,2}(a+3,b,0)+\\
&-\frac{\left(2 (-1)^a (a+2)!\right) }{(a+3)!}W_{3,2}(a+b+3,0,0) + \Psi
\end{align*}
for $a \ge 4$ and $2 \le b \le a-2$, where $\Psi$ is a normally ordered polynomial of lower weight fields and their derivatives, and $W_2(a+b+5,0)$.
Using this expression, we may rewrite $W_{3,1}(a,b+3,0,0)$ in terms of $W_{3,1}(a+b-k,k+3,0)$ with $-2 \le k \le b-1$, $W_{3,2}(a+1,b+2,0)$, $W_{3,2}(a+b+1,2,0)$, $W_{3,2}(a+2,b+1,0)$, $W_{3,2}(a+b+2,1,0)$, $W_{3,2}(a+3,b,0)$, $W_{3,2}(a+b+3,0,0)$, $D_2(a,b)$, a normally ordered polynomial of lower weight fields and their derivatives, and $W_2(a+b+5,0)$.
Similarly, we can directly compute $D_2'(a,b)$ and obtain that, 
for $a \ge 4$ and $2 \le b \le a-2$, $W_{3,2}(a,b+3,0,0)$ can be rewritten in terms of $W_{3,2}(a+b-k,k+3,0)$ with $-2 \le k \le b-1$, $W_{3,1}(a+1,b+2,0)$, $W_{3,1}(a+b+1,2,0)$, $W_{3,1}(a+2,b+1,0)$, $W_{3,1}(a+b+2,1,0)$, $W_{3,1}(a+3,b,0)$, $W_{3,1}(a+b+3,0,0)$, $D'_2(a,b)$, a normally ordered polynomial of lower weight fields and their derivatives, and $W_2(a+b+5,0)$.

Next, we eliminate $W_2(11,0)$, $W_{3,1}(5,4,0)$, $W_{3,1}(7,2,0)$, $W_{3,1}(9,0,0)$, $W_{3,2}(5,4,0)$, \\ $W_{3,2}(7,2,0)$, and $W_{3,2}(9,0,0)$. We use the following $8$ relations to set up two $7 \times 7$ matrices which allow us to remove $W_2(11,0)$, $W_{3,1}(5,4,0)$, $W_{3,1}(7,2,0)$, $W_{3,1}(9,0,0)$, $W_{3,2}(5,4,0)$, $W_{3,2}(7,2,0)$, and $W_{3,2}(9,0,0)$ regardless of $c$:
\begin{align*}
D_1(6,1) &= -\frac{9}{880} c W_2(11,0)+\frac{73 }{1540}W_2(11,0)+\frac{3}{4}W_{3,1}(7,2,0)+\frac{3}{8} W_{3,1}(9,0,0)+\\
&-\frac{1}{7} W_{3,2}(7,2,0)-\frac{13}{56} W_{3,2}(9,0,0) +\Psi
\end{align*}
\begin{align*}
D_1(5,2) &= \frac{25 c }{1584}W_2(11,0)-\frac{10477 }{166320}W_2(11,0)+\frac{1}{6} W_{3,1}(5,4,0)+\frac{1}{2} W_{3,1}(7,2,0)+\\
&-\frac{5}{12}W_{3,1}(9,0,0)-\frac{31}{84} W_{3,2}(7,2,0)+\frac{41}{168} W_{3,2}(9,0,0)+\Psi
\end{align*}
\begin{align*}
D_1(4,3)&=-\frac{45 c }{2464}W_2(11,0)+\frac{2371}{30800} W_2(11,0)+\frac{23}{20} W_{3,1}(5,4,0)-\frac{3}{4} W_{3,1}(7,2,0)+\\
&+\frac{17}{40} W_{3,1}(9,0,0)-\frac{1}{5} W_{3,2}(5,4,0)-\frac{1}{5} W_{3,2}(7,2,0)-\frac{1}{5} W_{3,2}(9,0,0)+\Psi
\end{align*}
\begin{align*}
D_1'(6,1) &= \frac{9}{880} c W_2(11,0)-\frac{73 }{1540}W_2(11,0)-\frac{1}{7} W_{3,1}(7,2,0)+\\
&-\frac{13}{56} W_{3,1}(9,0,0)+\frac{3}{4} W_{3,2}(7,2,0)+\frac{3}{8} W_{3,2}(9,0,0)+\Psi
\end{align*}
\begin{align*}
D_1'(5,2) &= -\frac{25 c }{1584}W_2(11,0)+\frac{10477 }{166320}W_2(11,0)-\frac{31}{84} W_{3,1}(7,2,0)+\\
&+\frac{41}{168} W_{3,1}(9,0,0)+\frac{1}{6} W_{3,2}(5,4,0)+\frac{1}{2} W_{3,2}(7,2,0)-\frac{5}{12} W_{3,2}(9,0,0)+\Psi
\end{align*}
\begin{align*}
D_1'(4,3) &= \frac{45 c }{2464}W_2(11,0)-\frac{2371}{30800}W_2(11,0)-\frac{1}{5}W_{3,1}(5,4,0)-\frac{1}{5}W_{3,1}(7,2,0)+\\
&-\frac{1}{5} W_{3,1}(9,0,0)+\frac{23}{20}W_{3,2}(5,4,0)-\frac{3}{4} W_{3,2}(7,2,0)+\frac{17}{40} W_{3,2}(9,0,0)+\Psi
\end{align*}
\begin{align*}
D_2(4,2) &= \frac{9 c }{1120}W_2(11,0)-\frac{118 }{2475}W_2(11,0)-\frac{3}{4} W_{3,1}(5,4,0)+\frac{3}{4} W_{3,1}(7,2,0)+\\
&+\frac{1}{5} W_{3,2}(5,4,0)-\frac{9}{140} W_{3,2}(7,2,0)-\frac{17}{280} W_{3,2}(9,0,0)+\Psi
\end{align*}
\begin{align*}
D_2(3,3) &= -\frac{9 c }{1120}W_2(11,0)+\frac{12259}{92400}W_2(11,0)-\frac{23}{20} W_{3,1}(5,4,0)+\frac{39}{40} W_{3,1}(7,2,0)+\\
&-\frac{5}{16} W_{3,1}(9,0,0)+\frac{29}{40} W_{3,2}(5,4,0)-\frac{2}{5}W_{3,2}(7,2,0)+\frac{13}{80} W_{3,2}(9,0,0)+\Psi
\end{align*}
where in each case $\Psi$ is some normally ordered polynomial involving lower weight terms and their derivatives.
Now, extracting the coefficients of $W_2(11,0)$, $W_{3,1}(5,4,0)$, $W_{3,1}(7,2,0)$, $W_{3,1}(9,0,0)$, $W_{3,2}(5,4,0)$, $W_{3,2}(7,2,0)$, and $W_{3,2}(9,0,0)$ in $D_1(6,1)$ through $D_2(4,2)$, we form a $7 \times 7$ matrix whose determinant is 
\begin{align}
\frac{898794549203512500 c-7006017689331806250}{745253681356800000000}
\end{align}
and is invertible precisely when $c \neq \frac{177147}{22726}$, allowing us to rewrite $W_2(11,0)$, $W_{3,1}(5,4,0)$, $W_{3,1}(7,2,0)$, $W_{3,1}(9,0,0)$, $W_{3,2}(5,4,0)$, $W_{3,2}(7,2,0)$, and $W_{3,2}(9,0,0)$ in terms of $D_1(6,1)$ through $D_2(4,2)$ a normally ordered polynomial of lower weight fields and their derivatives when  $c \neq \frac{177147}{22726}$.
Similarly, replacing $D_2(4,2)$ in the above work with $D_2(3,3)$ yields a $7 \times 7$ matrix with determinant 
\begin{align}
\frac{672577698175853400000-86414789000769075000 c}{11924058901708800000000}
\end{align}
and is invertible precisely when $c \neq \frac{1417176}{182083}$. 
A similar strategy may now be employed to reduce our list of generating fields to
\begin{align*}
W_2(a,0) \ \  0 \le a \le 8\\
W_{3,1}(a,0,0) \ \  0 \le a \le 6\\
W_{3,2}(a,0,0) \ \  0 \le a \le 6\\
\end{align*}

Next, we establish linear dependence relations at different conformal weights to remove the remaining extraneous generating fields.
First, we note that there are no linear dependence relations among generators and their derivatives of weight up to and including 6.

At weight 7, we have the following linear dependence relations:
\begin{equation}
20 W_2(3,0)-40 \partial W_{3,2}(0,0,0)+120 W_{3,2}(1,0,0)+\partial^5 L = 0
\end{equation}
and
\begin{align}
-\frac{3}{2} \partial W_2(2,0)&+\frac{3}{2} \partial^2W_2(1,0)-\frac{1}{2} W_{3,2}(3,0)-\frac{1}{2} \partial^3W_2(0,0)+\\
&+\partial W_{3,1}(0,0,0)-3 W_{3,1}(1,0,0)+2\partial W_{3,2}(0,0,0)-6 W_{3,2}(1,0,0) = 0 
\end{align}
allowing us to rewrite $W_{3,1}(1,0,0)$ and $W_{3,2}(1,0,0)$ in terms of $W_2(3,0)$ and a normally ordered polynomial of lower weight fields and their derivatives.

At weight 8 and 9 there are no linear dependence relations among involving nonzero multiples of the generators and involving their derivatives.

At weight 10, we have relations involving $-\frac{1}{180} (c+30) W_2(6,0),  W_{3,1}(4,0,0), W_{3,2}(4,0,0)$ and multiples of their derivatives which allow us to rewrite one of the weight 10 generators in terms of the other 2.

At weight $11$, when $c \neq  -\frac{9}{16}, \frac{9}{352} \left(3661\pm\sqrt{12376489}\right)$, we may form three relations which allow us to rewrite $W_2(7,0)$, $W_{3,1}(5,0,0)$, and $W_{3,2}(5,0,0)$ as a linear combinations of derivatives of lower weight generators. When $c =  -\frac{9}{16}, \frac{9}{352} \left(3661\pm\sqrt{12376489}\right)$, we may rewrite 2 of the generators $W_2(7,0)$, $W_{3,1}(5,0,0)$, and $W_{3,2}(5,0,0)$ in terms of the third.

 At weight $12$, the generators $W_2(8,0), W_{3,1}(6,0,0), W_{3,2}(6,0,0)$ can be removed using $3$ linear dependence relations among $W_2(8,0), W_{3,1}(6,0,0), W_{3,2}(6,0,0)$ and derivatives of lower weight generators.
\end{proof}


\section{The orbifolds $\left(\L_{c}^{\otimes 3}\right)^{\mathbb{Z}_3}$  and $\left(\L_{-22/5}^{\otimes 3}\right)^{S_3}$} 

In this section with look at the $\mathbb{Z}_3$ and $S_3$ orbifolds of the three fold tensor product of the Virasoro vertex operator algebra at central charge $-\frac{22}{5}$. This is the simplest and by far the most beautiful nontrivial example as the singular vector occurs in weight four, allowing for significant simplification. The character of $\L_{-\frac{22}{5}}$ is 
given by the famous Rogers-Ramanujan product series:
$${\rm ch}[\L_{-\frac{22}{5}}](\tau)=\frac{q^{\frac{11}{60}}}{\prod_{i \geq 0}(1-q^{5i+2})(1-q^{5i+3})}.$$
 Our first main result examines the $\mathbb{Z}_3$ case.

\begin{thm}\label{c225} The vertex algebra $\left(\L_{-22/5}^{\otimes 3}\right)^{\mathbb{Z}_3}$ is of type $(2,5,6,9)$.
\end{thm}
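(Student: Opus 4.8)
The plan is to parallel the argument in Theorem \ref{Z3genreduce}, but exploit the singular vector of $\L_{-22/5}$ at weight $4$ to drastically shorten the generating set. Recall that by the character formula ${\rm ch}[\L_{-22/5}](\tau)=q^{11/60}/\prod_{i\geq 0}(1-q^{5i+2})(1-q^{5i+3})$, and the character identities in Section 2 for $(X_c^{\otimes 3})^{\mathbb{Z}_3}$, one computes the graded dimension of $\left(\L_{-22/5}^{\otimes 3}\right)^{\mathbb{Z}_3}$ explicitly; matching this against the Hilbert series of a freely generated $W$-algebra of type $(2,5,6,9)$ would give the "numerical" half of the statement and in particular shows we must end up with exactly one generator in each of weights $2,5,6,9$ and no more. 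First I would record that in $\L_{-22/5}$ one has the relation coming from the weight-$4$ singular vector, which in the notation of Section 6 allows $W_{3,1}(0,0,0)$, $W_{3,2}(0,0,0)$, $W_2(0,0)$ and $L$-monomials of weight $4$ to be expressed in terms of one another — concretely, the weight-$4$ space of the orbifold becomes one-dimensional rather than (as in the universal case) containing an independent $W_2(0,0)$. This is the source of the simplification: many of the generators $W_2(a,0)$, $W_{3,i}(a,0,0)$ that survive in $\V_c^{\otimes 3}$ now become decomposable.

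The key steps, in order, would be: (1) start from the strong generating set $\{L, W_2(a,0), W_{3,1}(b,d,0), W_{3,2}(b,d,0)\}$ from the proof of Theorem \ref{Z3genreduce}, and first pass to the quotient by the ideal generated by the weight-$4$ singular vectors in each tensor factor — this kills/decomposes a weight-$4$ generator and, by taking $\partial$-derivatives and normally-ordered products with $L$, cascades to kill the "extra" families, leaving us needing only finitely many low-weight generators. (2) Identify the lowest genuinely new generators. Since the $w_k$-type weight-$3$ and weight-$4$ combinations collapse, one expects the first new primary after $\omega$ to appear in weight $5$ (built from the antisymmetric $W_{3,i}$ combinations and the $U_i$'s — note weight-$5$ data involves $\partial U_i$, $\nop{U_i\partial U_i}$-type fields which survive because the singular vector only constrains weight $\geq 4$ symmetric data), then weight $6$, then weight $9$. (3) Produce explicit decoupling relations: for each weight $w \geq 10$ (and for the "overshoot" weights $7,8$ and the one remaining relation tying weight-$9$ candidates together), construct linear-dependence relations among the generators, their $\partial$-derivatives, and normally ordered products, using the same $D_i$, $D_i'$ families as in Theorem \ref{Z3genreduce} but now reduced modulo the singular-vector ideal; solving the resulting linear systems expresses all but $\{L, g_5, g_6, g_9\}$ as normally ordered polynomials in these four. (4) Conclude via Lemma \ref{reconstruction}, and verify that $g_5, g_6, g_9$ can be chosen primary (or at least that the type, i.e. the weights of a minimal strong generating set, is $(2,5,6,9)$), cross-checking the count against the character computation from step~0.

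The main obstacle I expect is step (3) together with the bookkeeping of exactly \emph{which} generators survive: after imposing the weight-$4$ singular vector relation, the combinatorics of which $W_2(a,0)$, $W_{3,1}(a,b,0)$, $W_{3,2}(a,b,0)$ remain independent is delicate, because the singular-vector relation in a tensor product acts "one factor at a time" and its images under the $U_1,U_2$ change of variables \eqref{changevars} are not simply the naive $W$-generators; one has to carefully track how $L_i(-2)^2\1 - (\text{lower})$ in each factor translates into relations among $L$, $U_1$, $U_2$ monomials, and then propagate these. A secondary difficulty is confirming there is genuinely \emph{no} generator in weights $3,4,7,8$ and that weight $9$ contributes exactly one — this requires checking that the relevant linear systems (for the decoupling at weights $7,8,9,10,\dots$) have the expected ranks, which is a finite but nontrivial computation; unlike Theorem \ref{Z3genreduce} I would not expect exceptional central charges here since $c=-22/5$ is fixed, so the only subtlety is that the determinants appearing be nonzero at that specific value, which must be checked rather than argued generically. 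I would lean on direct computation (as the authors do throughout) for these rank verifications, and use the character identity as the organizing principle that tells us when to stop.
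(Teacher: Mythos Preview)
Your proposal is correct in outline and follows the same underlying strategy as the paper: start from the universal generating set of Theorem~\ref{Z3genreduce} and use the weight-$4$ singular vector to eliminate the excess generators. The paper's implementation, however, resolves precisely the bookkeeping obstacle you flag at the end. Rather than reducing the $D_i$, $D_i'$ relations modulo the singular-vector ideal and tracking how the factor-by-factor relation $10L_k(-2)^2\1 - 6L_k(-4)\1$ interacts with the $U_1, U_2$ basis, the paper first diagonalises the \emph{singular vectors themselves} under the $\mathbb{Z}_3$ action, forming eigenvectors $V^s$, $V^s_1$, $V^s_2$ parallel to $L$, $U_1$, $U_2$. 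It then builds explicit null fields $W^s_{2,i}(a)=\nop{(\partial^a U_j)V^s_k}$ and $W^s_{3,i}(a)=\nop{(\partial^a U_i)U_i V^s_i}$, each manifestly a descendant of the singular vector and hence zero in the simple quotient. The decoupling then becomes a matter of writing each unwanted generator as an explicit linear combination of these $W^s$ fields plus terms in $\{L, w_5, w_6, w_9\}$---a direct computation rather than a reduction of the universal $D$-systems. Your route would also work, but the paper's organisation sidesteps the change-of-variables headache you correctly anticipated, and does not invoke the character comparison at all.
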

\begin{proof}
We use the notation described in \eqref{changevars} and \eqref{origingens} and the well known singular vector
\be
v_k=10L_k(-2)^2-6L_k(-4)\1
\ee 
for $k=1,2,3$, in each copy of $\mathcal{V}_{-\frac{22}{5}}$, which is clearly 0 in the simple quotient $\L_{-22/5}$. As in previous proofs we perform our calculations at the level of vertex operators and so we set 
$$\begin{aligned}
V^s(z)&=Y(v_1+v_2+v_3,z)\\
V^s_1(z)&=Y(v_1+\eta^2 v_2+\eta v_3,z)\\
V^s_2(z)&=Y(v_1+\eta v_2+\eta^2 v_3,z)\\
\end{aligned}$$
and so under the generator of $\mathbb{Z}_3$ we have 
$$\begin{aligned}
V^s&\mapsto V^s\\
V^s_1&\mapsto \eta V^s_1\\
V^s_2&\mapsto \eta^2 V^s_2,\end{aligned}$$
which is parallel to the $\mathbb{Z}_3$ action on the generators $L$, $U_1$, and $U_2$ of $\left(\mathcal{V}_{-\frac{22}{5}}\right)^{\otimes 3}$. We now form the fields
\be\begin{split}
W^s_{2,1}(a)&=\nop{(\partial ^a U_2)V^s_1}\\
W^s_{2,2}(a)&=\nop{(\partial ^a U_1)V^s_2}\\
W^s_{2,3}(a)&=\nop{(\partial ^a V_1)V^s_2}\\
\end{split}
\hspace{.5in}
\begin{split}
W^s_{3,1}(a)&=\nop{(\partial^a U_1)U_1 V^s_1}\\
W^s_{3,1}(a)&=\nop{(\partial^a U_2)U_2 V^s_2}\\
\end{split}\ee
Observe that these fields are all descendants of the singular vectors and are thus zero in the simple quotient. Further the generators described in the proof of Theorem \ref{Z3genreduce} can be reduced down to the set containing the total conformal vector $\omega$ along with $w_k=U_1(-3)U_2(-2)$ for $k=5,6,9$, where $Y(u_i,z)=\sum_{n\in\mathbb{Z}}U_i(n)z^{-n-2}$. As such, we need to write the fields corresponding to the remaining generators in terms of $V^s$ and $W^s_{i,j}(a)$. For example, at weight four we have 
$$W_2(4,0)=\frac{3}{20}V^s-\frac{1}{2}\nop{LL}-\frac{9}{20}\partial^2L$$
inside the simple quotient. Similar equations exist for all remaining generators except those described above. For example, at weight 8 we require three such equations:
$$\begin{aligned}
W_{3,1}(2,0,0)&=-\frac{17}{56}W^s_{2,1}(2)-\frac{13}{168}W^s_{2,2}(2)+\frac{1}{84}W^s_{2,3}(0)-\frac{157}{758}W^s_{3,1}(0)-\frac{95}{756}W^s_{3,2}+\Psi_1,\\
W_{3,2}(2,0,0)&=-\frac{17}{56}W^s_{2,1}(2)-\frac{13}{168}W^s_{2,2}(2)+\frac{1}{84}W^s_{2,3}(0)+\frac{95}{756}W^s_{3,1}(0)-\frac{347}{756}W^s_{3,2}+\Psi_2,\\
W_{2}(4,0)&=\frac{9}{28}W^s_{2,1}(2)-\frac{1}{28}W^s_{2,2}(2)-\frac{1}{14}W^s_{2,3}(0)-\frac{25}{126}W^s_{3,1}(0)+\frac{25}{126}W^s_{3,2}+\Psi_3.\\
\end{aligned}$$

\end{proof}

\begin{rem}  A complete list of minimal central charges for which the type of $(\L_c^{\otimes 3})^{\mathbb{Z}_3}$ is
different than the generic type is necessarily among
 $c_{2,5},c_{3,4},c_{3,5},c_{2,7},c_{2,9}$, $c_{2,11}$. In these cases we 
get types: $(2,5,6,9)$, $(2,4,5,6^2,7,8,9)$, $(2,4,5,6^3,7,8^2,9^3)$, $(2,4,5,6^2,7,8,9^2)$, $(2,4,5,6^3,7,8^3,9^2,10^2)$, and $(2,4,5,6^3,7,8^3,9^3,10^2)$, respectively. 
As far as we know, there is no (universal) affine $W$-algebra of these types.
Except for $c_{2,5}$ it also interesting to look at $c_{3,4}=\frac12$ due to fermionic construction. In this 
case we get that $(\L_c^{\otimes 3})^{\mathbb{Z}_3}$ is isomorphic to $L_2(\frak{sl}(2))^{A_4 \times \mathbb{Z}_2}$.

We shall return to these examples in the sequel \cite{MPS2}.

\end{rem}

\begin{thm}
The vertex algebra $\left(\L_{-22/5}^{\otimes 3}\right)^{S_3}$ is of type $(2,6)$.
\end{thm}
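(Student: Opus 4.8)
The plan is to deduce this from Theorem~\ref{c225} by analyzing the residual $\mathbb{Z}_2$-action. Writing $S_3=\mathbb{Z}_3\rtimes\langle\tau\rangle$ with $\tau$ a transposition, we have
$$\left(\L_{-22/5}^{\otimes 3}\right)^{S_3}\;=\;\mathcal{W}^{\tau},\qquad \mathcal{W}:=\left(\L_{-22/5}^{\otimes 3}\right)^{\mathbb{Z}_3},$$
and by Theorem~\ref{c225} the algebra $\mathcal{W}$ is strongly generated by $\omega$ together with three fields $w_5,w_6,w_9$ of weights $5,6,9$ which, by the reduction carried out in that proof, may be taken (modulo lower-weight corrections) in the span of $W_2(1,0),W_2(2,0),W_2(5,0)$. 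In the coordinates \eqref{changevars}, $\tau$ fixes $\omega$ and interchanges $u_1\leftrightarrow u_2$, hence $U_1\leftrightarrow U_2$; a short computation shows that the OPE of $U_1$ with $U_2$ closes on the Virasoro subalgebra $\langle\omega\rangle$ generated by $\omega$ (it reads $U_1(z)U_2(w)\sim \tfrac{\partial L(w)}{z-w}+\tfrac{2L(w)}{(z-w)^2}+\tfrac{3c/2}{(z-w)^4}$), so by quasi-symmetry of the normally ordered product together with \eqref{omega2rewrite} one gets $\tau\big(W_2(a,0)\big)\equiv(-1)^aW_2(a,0)$ modulo derivatives, lower-weight fields, and $\langle\omega\rangle$. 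Consequently $\omega$ and $w_6$ can be chosen $\tau$-invariant, while $w_5$ and $w_9$ can be chosen $\tau$-anti-invariant.

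Next I would invoke the standard description of $\mathbb{Z}_2$-orbifolds (as in \cite{MPW}): $\mathcal{W}^\tau$ is strongly generated by the $\tau$-even generators of $\mathcal{W}$ together with the normally ordered products of pairs of $\tau$-odd generators. After the usual reductions via \eqref{omega2rewrite}, the only products that are not derivatives of others are $\nop{w_5w_5}$, $\nop{w_5w_9}$, $\nop{w_9w_9}$, of weights $10,14,18$. Hence $\left(\L_{-22/5}^{\otimes 3}\right)^{S_3}$ is strongly generated by $\omega$, $w_6$, $\nop{w_5w_5}$, $\nop{w_5w_9}$, $\nop{w_9w_9}$; there is nothing to remove in weights $3,4,7,8$ (where $\mathcal{W}$ has no generators) nor in weights $5,9$ (where the only generators of $\mathcal{W}$, namely $w_5$ and $w_9$, are $\tau$-odd and hence absent from $\mathcal{W}^\tau$), and since $\mathcal{W}$ is of type $(2,5,6,9)$ we have $w_6\notin\langle\omega\rangle$, so the weight-$6$ generator is genuinely needed.

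It then remains to prove that $\nop{w_5w_5}$, $\nop{w_5w_9}$, $\nop{w_9w_9}$ all lie in the vertex subalgebra strongly generated by $\omega$ and $w_6$; this is the computational core and the main obstacle. The weight-$10$ identity for $\nop{w_5w_5}$ is obtained exactly in the spirit of the proof of Theorem~\ref{c225}: the weight-$4$ singular vector $v_k=10L_k(-2)^2\1-6L_k(-4)\1$ of $\L_{-22/5}$, assembled into the descendant fields $V^s,W^s_{i,j}$, vanishes in the simple quotient and supplies exactly the relations needed to rewrite $\nop{w_5w_5}$ as a normally ordered polynomial in $\omega$, $w_6$ and their derivatives. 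The weight-$14$ and weight-$18$ identities for $\nop{w_5w_9}$ and $\nop{w_9w_9}$ are longer but structurally identical; alternatively, once the weight-$10$ relation is in hand one bootstraps by applying suitable modes $w_5(n)$ and using that $w_5$ together with $\mathcal{W}^\tau$ generates all of $\mathcal{W}$. The same relations can be derived more concretely by reducing the Weyl strong generating set $\{u_2(m_1,m_2),u_3(m_1,m_2,m_3)\}$ of \eqref{firstorbifoldgenerators}: multiplying $v_k=0$ by monomials in the $L_k(-2-m)$ and summing over $k$ gives $u_2(0,0)=\tfrac{3}{10}\partial^2\omega$, shows the entire $u_3$-family is redundant (for instance $u_3(0,0,0)=\tfrac35u_2(2,0)+\tfrac1{20}\partial^4\omega$), and leaves a single new generator of weight $6$ coming from the $u_2$-family. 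Finally, the $S_3$-character formula of Section~2 applied to $\L_{-22/5}$ gives an independent check of the generator count, confirming the type is exactly $(2,6)$.
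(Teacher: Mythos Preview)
Your overall strategy matches the paper's: realize $(\L_{-22/5}^{\otimes 3})^{S_3}$ as the $\mathbb{Z}_2$-fixed points of $\mathcal{W}=(\L_{-22/5}^{\otimes 3})^{\mathbb{Z}_3}$, identify $w_6$ as $\tau$-even and $w_5,w_9$ as $\tau$-odd, and then show that the quadratic products $\nop{w_5w_5},\nop{w_5w_9},\nop{w_9w_9}$ lie in $\langle\omega,w_6\rangle$. The parity analysis via quasi-symmetry and \eqref{omega2rewrite} is correct.

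The substantive difference is in how the last step is handled. The paper introduces the symmetric and antisymmetric combinations $X^\pm(a,b)=\nop{(\partial^a U_1)(\partial^b U_2)}\pm\nop{(\partial^a U_2)(\partial^b U_1)}$ and dispatches all products of odd generators simultaneously via the single identity
\[
\nop{X^-(a_1,a_2)X^-(b_1,b_2)}=\nop{X^+(a_1,b_1)X^+(a_2,b_2)}-\nop{X^+(a_1,b_2)X^+(a_2,b_1)}+\Psi,
\]
with $\Psi$ a normally ordered polynomial in $L$ and the $X^+$'s. This is a polarization-type identity independent of the central charge and of the singular vector; since the $X^+$ fields already reduce to $\langle L,X^+(2,0)\rangle$ by the symmetric half of the $\mathbb{Z}_3$-reduction, the proof finishes in one stroke, with no separate computations at weights $10,14,18$.

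By contrast, you propose to establish each of the three relations by direct use of singular-vector descendants. This should in principle work, but you only assert it; the weight-$14$ and weight-$18$ identities are not exhibited, and your bootstrap via modes of $w_5$ is not correct as written (applying $w_5(n)$ to a $\tau$-invariant element produces a $\tau$-odd element, not one in $\mathcal{W}^\tau$). Your alternative route through the symmetric Weyl generators $u_2,u_3$ is genuinely different and more direct, and the elimination of the $u_3$-family via $v_k=0$ is a nice observation; however, you still owe the reduction of the full $u_2$-family down to the single weight-$6$ generator, which is exactly the same infinite family of relations you need in the first approach. The paper's $X^-X^-$-to-$X^+X^+$ identity is what replaces all of this.
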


\begin{proof}
Our approach will be to realize $\left(\L_{-22/5}^{\otimes 3}\right)^{S_3}$ as the $S_2\cong \mathbb{Z_2}$ orbifold of $\left(\L_{-22/5}^{\otimes 3}\right)^{\mathbb{Z}_3}$ where the automorphism permutes the vectors $u_1$ and $u_2$ as described in $\eqref{changevars}$. We make the important observation that at this central charge there is no need for ``cubic'' generators such at $\nop{U_1 U_1 U_1}+\nop{U_2 U_2 U_2}$, as the individual summands have been removed from the $\mathbb{Z}_3$ orbifold generating set. As such, we focus on the quadratic generators by considering a slightly different set of quadratic generators for the orbifold $\left(\V^{\otimes 3}_c\right)^{\mathbb{Z}_3}$ than those described in Theorem \ref{Z3genreduce}:
\be\begin{aligned}
X^+(a,b)&=\nop{(\partial^a U1)(\partial^b U2)}+\nop{(\partial^a U2)(\partial^b U1)}\\
X^-(a,b)&=\nop{(\partial^a U1)(\partial^b U2)}-\nop{(\partial^a U2)(\partial^b U1)}.
\end{aligned}\ee
Next in parallel with Theorem \ref{Z3genreduce} and Theorem \ref{c225} this infinite list of generators for the $\mathbb{Z}_3$ orbifold can be replaced with those fixed under the additional $\mathbb{Z}_2$ action: $X^+(2,0)$
a which is weight $6$, as well as those sent to their negative: $X^-(1,0) \text{ and } X^-(5,0),$ of weight 5 and 9. Next we see that by taking appropriate symmetric combinations of the relations given in the proof of Theorem \ref{Z3genreduce} all terms in that are quadratic in $\left(\L_{-22/5}^{\otimes 3}\right)^{S_3}$ can be using only $X^+(2,0)$. A priori, the $S_3$ orbifold appears to also require generators that are of the form $\nop{X^-(a_1,a_2)X^-(b_1,b_2)}$ however a simple calculation
$$\nop{X^-(a_1,a_2)X^-(b_1,b_2)}=\nop{X^+(a_1,b_1)X^+(a_2,b_2)}-\nop{X^+(a_1,b_2)X^+(a_2,b_1)}+\Psi$$
where $\Psi$ is a normally ordered polynomial only involving the quadratic generators $X^+(a,b)$ and $L$ and thus may be written with only $X^+(2,0)$ and $L$ finishing the proof.

\end{proof}

\section{Conformal embeddings of $\V_{c_{p,q}}$ inside $\L_{c_{r,s}}^{\otimes^n}$ }

In this part, which is somewhat disconnected from the rest of the paper, we analyze conformal embeddings 
of the form 
\begin{equation} \label{conf-mod}
\V_{c_{p,q}} \hookrightarrow \L_{c_{r,s}}^{\otimes n}.
\end{equation}
Observe that on the left hand side we have a universal VOA. If we replace  $\V_{c_{p,q}}$ with its simple quotient, things are quite simple and we have the following known result (see Wakimoto's book \cite{Wak}). 
\begin{prop}
For $n \geq 2$, the only conformal embeddings of  $\L_{c_{p,q}} \hookrightarrow \L_{c_{r,s}}^{\otimes n}$
are: $(p,q)=(2,3)$ any $n$, and $(p,q)=(3,10)$ with $n=2$.
\end{prop}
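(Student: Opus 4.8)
The plan is to prove that the only conformal embeddings $\L_{c_{p,q}} \hookrightarrow \L_{c_{r,s}}^{\otimes n}$ with $n \geq 2$ are $(p,q)=(2,3)$ for arbitrary $n$, and $(p,q)=(3,10)$ with $n=2$. The key constraint is that a conformal embedding forces equality of central charges and of conformal vectors, so $c_{p,q} = n\, c_{r,s}$. First I would reduce to a purely number-theoretic search: writing $c_{p,q} = 1 - 6(p-q)^2/(pq)$, the equation $c_{p,q} = n\, c_{r,s}$ together with the coprimality and size constraints $p,q,r,s \geq 2$ should cut down the possibilities drastically. Since each $c_{p,q} < 1$ (strictly, for $(p,q)\neq(2,3)$ we have $c_{p,q} \leq 0$ unless... actually $c_{2,3}=0$, $c_{3,4}=1/2$, $c_{2,5}=-22/5$, etc.), one observes that $c_{r,s} \le 1/2$ for all minimal models, so $n\, c_{r,s}$ is quite negative for $c_{r,s}<0$; hence either $c_{r,s}=0$ (forcing $c_{p,q}=0$, i.e. $(p,q)=(2,3)$ and then $\L_{c_{p,q}}=\CC=\L_{c_{r,s}}^{\otimes n}$ trivially for $r=s\pm1$... more carefully, $\L_0 \hookrightarrow \L_0^{\otimes n}$), or $c_{r,s}$ is among the finitely many small-denominator negative values, and for each such $c_{r,s}$ only finitely many $n$ keep $n\,c_{r,s}$ in the range of the $c_{p,q}$ function.

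The next step is to eliminate the surviving numerical solutions of $c_{p,q} = n\, c_{r,s}$ that do not actually come from a genuine conformal embedding. Matching central charges is necessary but not sufficient. Here I would bring in character/graded-dimension considerations: a conformal embedding $V \hookrightarrow W$ realizes $W$ as a module over $V$, so $\mathrm{ch}[W] - \mathrm{ch}[V]$ must have nonnegative integer coefficients, and moreover $W$ must decompose as a direct sum of irreducible $V$-modules $L(c_{p,q}, h)$ with $h$ among the finitely many allowed conformal weights of the $(p,q)$-minimal model. Using the explicit product formula $\mathrm{ch}[\L_{c_{p,q}}](\tau) = \theta^{1,1}_{p,q}(\tau)/\eta(\tau)$ from Section 2 and the tensor-power character $\mathrm{ch}[\L_{c_{r,s}}^{\otimes n}] = \mathrm{ch}[\L_{c_{r,s}}]^n$, one checks the leading low-order $q$-expansion coefficients: the first nontrivial graded piece of $\L_{c_{r,s}}^{\otimes n}$ must be spanned by the conformal vector $\omega$ alone (weight $2$, multiplicity $1$) if the embedding is to send $\omega_{\V}$ to the conformal vector, and the multiplicity of weight-$2$ states in $\L_{c_{r,s}}^{\otimes n}$ being exactly $1$ already restricts $n$ and the models. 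In fact since $\L_{c_{r,s}}$ has a weight-$2$ space of dimension $1$ (spanned by $\omega$) when $c_{r,s}\neq 0$, the tensor product $\L_{c_{r,s}}^{\otimes n}$ has an $n$-dimensional weight-$2$ space, so a conformal embedding of a Virasoro-type $\L_{c_{p,q}}$ (whose weight-$2$ space is $1$-dimensional) requires further singular-vector structure, and one verifies directly that $(p,q)=(2,3)$ (trivial) and $(p,q)=(3,10)$, $n=2$ are the only survivors by matching the next few coefficients.

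The main obstacle I expect is the finiteness argument in the first step: bounding the number of $(p,q,r,s,n)$ with $c_{p,q}=n\,c_{r,s}$ requires care because as $p,q\to\infty$ with $|p-q|$ bounded the value $c_{p,q}\to 1$, so there are infinitely many minimal models with $c_{p,q}$ close to $1$, and likewise $c_{r,s}\to 1$; one must rule out "large $n$ times something close to $1/n$" type coincidences. The cleanest route is probably: since $c_{r,s} \le 1/2 < 1$ with equality only at $(3,4)$, and $c_{r,s}\le 0$ for $(r,s)\notin\{(2,3),(3,4)\}$ while $c_{r,s} \ge$ some fixed negative bound is false (they are unbounded below), one instead notes $n\,c_{r,s} = c_{p,q} \ge$ (infimum of minimal central charges is $-\infty$, so this is vacuous) — hence the correct constraint is the upper one: $c_{p,q} = n\,c_{r,s} \le n/2$, combined with $c_{p,q} < 1$, giving no contradiction directly. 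So the real content is arithmetic: clearing denominators, $c_{p,q}=n c_{r,s}$ becomes $pq\,rs(1 - n) + 6n(p-q)^2 rs - 6(r-s)^2 pq\cdot(\text{sign})$ — a Diophantine equation whose solutions I would enumerate by fixing $c_{r,s}$ to be one of the "named" small values and solving for $(p,q)$, then separately arguing (via the character inequality, since for $c_{r,s}$ with large $r,s$ the algebra $\L_{c_{r,s}}^{\otimes n}$ grows too slowly or the weights don't match the $(p,q)$-model spectrum) that no embedding exists outside the listed cases. I would close by exhibiting the two genuine embeddings explicitly — the trivial $\L_0\hookrightarrow \L_0^{\otimes n}$ and the classical $c=-44/5$ coincidence $\L_{c_{3,10}} \hookrightarrow \L_{c_{2,5}}^{\otimes 2}$ (where $c_{3,10} = -44/5 = 2\cdot(-22/5)$) — and citing Wakimoto's book for the verification that these are honest conformal vertex subalgebra embeddings.
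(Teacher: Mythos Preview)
Your approach has a genuine gap, and you correctly identify it yourself in the ``main obstacle'' paragraph: the central charge equation $c_{p,q}=n\,c_{r,s}$ alone has infinitely many solutions (the values $c_{p,q}$ accumulate at $1$ from below and are unbounded below), and the low-order character-coefficient checks you sketch do not yield a uniform finiteness bound. Your weight-$2$ multiplicity remark also does not rule anything out: a conformal embedding only sends $\omega$ to $\omega$, and the remaining $n-1$ weight-$2$ vectors in $\L_{c_{r,s}}^{\otimes n}$ simply sit in other $\L_{c_{p,q}}$-module summands.

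The paper's key idea, which you are missing, is to extract a \emph{second} Diophantine constraint from the asymptotic growth of characters. By rationality, $\L_{c_{r,s}}^{\otimes n}$ decomposes as a \emph{finite} direct sum of irreducible $\L_{c_{p,q}}$-modules, and each of these has character growing like $A\,e^{\pi c_{\mathrm{eff}}/(12t)}$ as $\tau=it\to 0^+$, where $c_{\mathrm{eff}}(p,q)=1-\tfrac{6}{pq}$ is the effective central charge of the $(p,q)$ minimal model. On the other hand $\mathrm{ch}[\L_{c_{r,s}}^{\otimes n}]=\mathrm{ch}[\L_{c_{r,s}}]^n$ grows like $e^{n\pi c_{\mathrm{eff}}(r,s)/(12t)}$. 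Equating exponents gives
\[
1-\frac{6}{pq}\;=\;n\Bigl(1-\frac{6}{rs}\Bigr).
\]
Since both sides lie in $[0,1)$, this forces $1-\tfrac{6}{rs}<\tfrac{1}{n}$, i.e.\ $rs<\tfrac{6n}{n-1}\le 12$, so $(r,s)\in\{(2,3),(2,5)\}$. Then $(r,s)=(2,3)$ gives $(p,q)=(2,3)$ for any $n$, while $(r,s)=(2,5)$ forces $n=2$ and $pq=30$; combining with the central charge condition $c_{p,q}=2c_{2,5}=-\tfrac{44}{5}$ singles out $(p,q)=(3,10)$. This asymptotic/effective-central-charge step is what makes the finiteness immediate and is the missing ingredient in your outline.
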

\begin{proof} Observe that $\L_{c_{2,3}}$ is the trivial $1$-dimensional vertex algebra, so the statement is clear in this case.
If there is an embedding $\L_{c_{p,q}} \hookrightarrow \L_{c_{r,s}}^{\otimes n}$, because of rationality,  
$\L_{c_{r,s}}^{\otimes n}$ decomposes as a (finite) sum of irreducible $\L_{c_{p,q}}$-modules. Recall 
the character of $\L_{c_{p,q}}$ from Section 2: 
$${\rm ch}[\L_{c_{p,q}}](\tau)=\frac{ \theta_{p,q}^{1,1} (\tau)}{\eta(\tau)}. $$
Then asymptotically (as $t \to 0^+$, where $\tau=i t$):
$${\rm ch}[\L_{c_{p,q}}](\tau) \sim A_{p,q}  e^{\frac{\pi}{12 t} -\frac{6 \pi}{12 t p q}},$$
where $A_{p,q} \in \mathbb{R}$.
Now using 
$${\rm ch}[\L_{c_{r,s}}^{\otimes n}](\tau)  \sim A e^{\frac{n \pi}{12 t} -\frac{6 n \pi}{12 t p q}},$$
for some non-zero constant $A$,  and comparing the growth of characters on both sides of $\L_{c_{p,q}} \hookrightarrow \L_{c_{r,s}}^{\otimes n}$
we obtain an equality 
$$1-\frac{6}{pq}=n \left(1-\frac{6}{st}\right).$$
Easy inspection implies that the only nontrivial solution ($(p,q) \neq (2,3)$) is $(p,q)=(2,5)$ , $n=2$ 
and $(r,s)=(3,10)$.
\end{proof}

Next we consider universal embeddings   $\V_{c_{p,q}} \hookrightarrow \L_{c_{r,s}}^{\otimes n}$.
This is in fact equivalent to the following algebraic condition: 
\begin{equation} \label{dioph}
1-\frac{6(p-q)^2}{pq} = n \left(1-\frac{6(r-s)^2}{rs}\right).
\end{equation}
Next, we rewrite (\ref{dioph}) as
$$13-6\left(t+\frac1t\right)=n \left(13-6 \left(w+\frac1w \right)\right),$$
where, for convenience, we introduced  $t=\frac{p}{q}$ and $w=\frac{r}{s}$. Solving this equation in $w$ gives
$$w = \frac{ \pm \sqrt{\left(-13 n t-6 t^2+13 t-6\right)^2-144 n^2 t^2}+13 n t+6
   t^2-13 t+6}{12 n t}.$$
   As $w$ must be rational the expression under the radical must be a (rational) square. Thus, we have to find all rational points on the family of quartic curves (depending on $n$) 
   $$ u^2=\left(-13 n t-6 t^2+13 t-6\right)^2-144 n^2 t^2.$$
For fixed $n$, it is not difficult to see the above equation reduces to an elliptic curve. Since we always have a rational point (coming from $(p,q)=(r,s)=(2,3)$; $c_{p,q}=c_{r,s}=0$)  we can easily transform the quartic to its minimal cubic form.
Next we analyze $n=2$ and $n=3$ discussed in the paper.

\subsection{$n=2$}

The relevant curve is  $u^2=36 + 156 t - 335 t^2 + 156 t^3 + 36 t^4$. After a transformation it reduces to 
an elliptic curve with minimal Weierstrass' equation 
$$Y^2+Y X = X^3-1141 X+44321.$$
Using computer program {\tt Magma} we get the group of rational points ${\rm Rat}$ on this curve to be isomorphic to  $${\rm Rat} \cong \mathbb{Z}/4 \mathbb{Z} \oplus \mathbb{Z}$$ with the torsion 
part generated by $(0,6)$ (in the quartic equation) and the free part is generated by $(-6,0)$ (also in the quartic).
Using computer we can see that these solutions are quickly "huge" (large numerators and denominators). 
For instance, only solutions in the range $2 \leq p,q \leq 1000$ are (for simplicity we assume $p>q$ so we omit $(q,p)$ from the list):

$$(p,q)=(2,3), (3,10) , (4,11), (13,18), (803,852)$$
and the $(r,s)$ values are respectively
$$(r,s)=(2,3),  (2,5), ( 11,24), (9,13), (584,781)$$

\subsection{$n=3$} 

The relevant curve is 
$u^2 = 9 + 78 t - 137 t^2 + 78 t^3 + 9 t^4$ and it reduces to an elliptic curve
$$Y^2+YX = X^3-31X+3536.$$
The group of rational points ${\rm Rat}$ is again isomorphic to $\mathbb{Z}/4 \oplus \mathbb{Z}$.
The torsion subgroup is generated by an element that corresponds to  $(0,-3)$ on the quartic curve and the free 
subgroup is generated by element $(13/24, -315/64)$, also on the quartic.
Compared to $n$, the solutions are much more sparse. In the range $2 \leq p,q \leq 10,000$ with $p<q$ we have only a few pairs:
$$(p,q)=(2,3), (2,23),(13,24),(3256,4095)$$
with 
$$(r,s)=  (2,3),(9,46), (8,13), (5291,7560)$$

\begin{rem} It is interesting that for higher $n$ , the rank of the group of rational points on the elliptic curve can be bigger than $1$. 
For instance, again using {\tt Magma} it is $2$ for $n=19$ but for $n=23$ is again $1$.
 \end{rem}

We are almost done proving the following results.
\begin{prop} Among each family of rational $W$-algebras $(\L_{c_{p,q}}^{\otimes 2})^{S_2}$ and $(\L_{c_{p,q}}^{\otimes 3})^{\mathbb{Z}_3}$, there are infinitely many vertex algebras of minimal (Virasoro) central charge.
\end{prop}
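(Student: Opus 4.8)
The plan is to reduce the statement to a density argument for rational points on the elliptic curves of this section. Since $\bigl(\L_{c_{p,q}}^{\otimes 2}\bigr)^{S_2}$ and $\bigl(\L_{c_{p,q}}^{\otimes 3}\bigr)^{\mathbb{Z}_3}$ have central charges $2c_{p,q}$ and $3c_{p,q}$, the orbifold $\bigl(\L_{c_{p,q}}^{\otimes n}\bigr)^{\mathbb{Z}_n}$ (with $n=2,3$) has minimal (Virasoro) central charge exactly when $n\,c_{p,q}=c_{r,s}$ for some coprime $r,s\geq 2$. Writing $\sigma=p/q$, $\sigma'=r/s$ and using $c_{a,b}=13-6\bigl(\tfrac{a}{b}+\tfrac{b}{a}\bigr)$, this is the Diophantine condition \eqref{dioph} with the two index pairs interchanged; as in \S8 it is equivalent to a rational point $(\sigma,v)$ on a palindromic quartic $v^2=P_n(\sigma)$ with leading coefficient $(6n)^2$, obtained by solving \eqref{dioph} for $\sigma'$, subject to the side conditions that $\sigma$ and the accompanying value $\sigma'=\sigma'_n(\sigma)$ both be quotients of coprime integers $\geq 2$. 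Crucially, once $(\sigma,v)$ is rational the number $\sigma'_n(\sigma)$ is automatically rational — that is the content of the ``discriminant is a square'' step. The quartic carries the rational point with $\sigma=\sigma'=2/3$ (from $c_{2,3}=0$), and is birational to the elliptic curve $E_n$ of \S8, which by the {\tt Magma} computations there has Mordell--Weil rank $1$; in particular it has infinitely many rational points.

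The heart of the argument is to produce infinitely many rational points with $\sigma$ in a small fixed interval $I$ around $2/3$. I would use that $P_n$, being palindromic with square leading coefficient, gives a curve with two real points at infinity; that $P_n$ has exactly two real roots (a short check) and is positive near $\sigma=2/3$, so the real locus over a small interval around $2/3$ is a pair of smooth arcs and $E_n(\mathbb{R})\cong S^1$ is connected; and that, $E_n$ having positive rank, a point of infinite order generates a topologically dense subgroup of $E_n(\mathbb{R})$. Then every nonempty open arc — in particular $\{\sigma\in I\}$ for any small open $I\ni 2/3$ — contains infinitely many rational points.

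Finally I would choose $I\subset(1/2,1)$ small around $2/3$ so that the side conditions are automatic: (i) any rational $\sigma=p/q\in I$ in lowest terms has $p,q\geq 2$ (if $p=1$ then $\sigma\leq1/2$, if $q=1$ then $\sigma\in\mathbb{Z}$), so $c_{p,q}$ is a genuine minimal-model central charge, and distinct $\sigma\in I$ give distinct $c_{p,q}$; and (ii) from $\sigma'+\tfrac{1}{\sigma'}=\tfrac{13(1-n)}{6}+n\bigl(\sigma+\tfrac{1}{\sigma}\bigr)$ the value $\sigma'_n(\sigma)$ lies in a short interval around $2/3$, which after shrinking $I$ stays inside $(1/2,1)$; hence $\sigma'_n(\sigma)+1/\sigma'_n(\sigma)>2$, so the corresponding $c_{r,s}<1$, and $\sigma'_n(\sigma)=r/s$ in lowest terms has $r,s\geq 2$. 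Thus each of the infinitely many rational points with $\sigma\in I$ yields a coprime pair $(p,q)$ with $p,q\geq 2$ and $n\,c_{p,q}$ a minimal-model central charge; the $c_{p,q}$ are pairwise distinct, so the vertex algebras $\bigl(\L_{c_{p,q}}^{\otimes n}\bigr)^{\mathbb{Z}_n}$ are too, which is the claim.

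The main obstacle is the density step — checking that $E_n(\mathbb{R})$ is connected and that a point of infinite order generates a dense subgroup of it (both standard, but needing the observations that $P_n$ has leading coefficient a square and exactly two real roots); everything else is the reduction to \eqref{dioph}, elementary interval arithmetic, and the rank computation already carried out in \S8. A more hands-on alternative, avoiding the topology, would be to iterate the group law from a generator of $E_n(\mathbb{Q})$ and bound the $\sigma$-coordinates directly, but the density argument is cleaner and uniform in $n$.
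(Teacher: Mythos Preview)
Your proposal is correct and follows essentially the same approach as the paper. The paper's proof is a one-line invocation of the Poincar\'e--Hurwitz theorem (on an elliptic curve with infinitely many rational points, the rational points accumulate at every rational point), applied to the curves $E_n$ of \S8 near the already-exhibited positive solutions; you unpack exactly this density statement via the connectedness of $E_n(\mathbb{R})$ and the fact that an infinite subgroup of $S^1$ is dense, and you are more careful than the paper about verifying the side conditions $p,q,r,s\geq 2$.
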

\begin{proof}
We only have to argue that there are infinitely many such pairs $(p,q)$, where $p,q \geq 2$ coming from rational solutions of the elliptic curve. This follows immediately 
from the Poincare-Hurwitz theorem: on every elliptic curve with infinitely many rational points there are infinitely many rational solutions around each rational point. Since we already found several rational solutions $\frac{p}{q} >0$ for $n=2$ and $n=3$ we are done.
\end{proof}







\section{Future work}

Here we announce a few results and directions for future investigation. In the sequel \cite{MPS2} we shall prove the following result:
\begin{thm} Except for finitely many central charges, the permutation orbifold 
$(\V_c^{\otimes 3})^{S_3}$ is of type $(2, 4, 6^2,8^2,9,10^2,11,12^3,14)$.
\end{thm}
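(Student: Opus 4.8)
The plan is to follow the strategy used for $\left(\L_{-22/5}^{\otimes 3}\right)^{S_3}$ and realize $\left(\V_c^{\otimes 3}\right)^{S_3}$ as a $\mathbb{Z}_2$-orbifold of $\left(\V_c^{\otimes 3}\right)^{\mathbb{Z}_3}$. Writing $S_3=\langle\sigma\rangle\rtimes\langle\tau\rangle$ with $\sigma$ a $3$-cycle and $\tau$ a transposition, we have $\left(\V_c^{\otimes 3}\right)^{S_3}=\left(\left(\V_c^{\otimes 3}\right)^{\mathbb{Z}_3}\right)^{\langle\tau\rangle}$, and in the coordinates \eqref{changevars} the automorphism $\tau$ fixes $\omega$ and interchanges $u_1\leftrightarrow u_2$. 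So the first step is to start from the strong generating set of $\left(\V_c^{\otimes 3}\right)^{\mathbb{Z}_3}$ produced by Theorem \ref{Z3genreduce} (generic type $(2,4,5,6^3,7,8^3,9^3,10^2)$) and diagonalize the residual $\tau$-action: using $W_2'(a,0)=(-1)^aW_2(a,0)+(\text{lower weight})$, the combinations $W_2(a,b)\pm W_2'(a,b)$ and $W_{3,1}(a,b,c)\pm W_{3,2}(a,b,c)$ are $\tau$-eigenvectors, so one obtains a generating set split into a $\tau$-even family $\{a_i\}$ (in weights $2,4,6^2,8^2,9$, together with a weight-$10$ field governed by the weight-$10$ reduction in Theorem \ref{Z3genreduce}) and a $\tau$-odd family $\{b_j\}$ (in weights $5,6,7,8,9^2$, and possibly weight $10$).

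Next, by the Weyl polarization theorem applied to ${\rm gr}\left(\left(\V_c^{\otimes 3}\right)^{S_3}\right)\cong\CC[x_i(m)]^{S_3}$ --- which, as in \eqref{dringgenerators}, is generated by the polarizations of $p_1,p_2,p_3$ --- together with Lemma \ref{reconstruction}, the orbifold $\left(\V_c^{\otimes 3}\right)^{S_3}$ is strongly generated by $\omega$, by the even generators $\{a_i\}$, and by the normally ordered products $\nop{\partial^m b_j\, b_k}$, $m\ge 0$. The heart of the argument is to reduce this set: the Leibniz identities (analogues of \eqref{omega2rewrite} and \eqref{w31fix}) and the $\partial$-trick first bring $\nop{\partial^m b_j\, b_k}$ down to $\nop{b_jb_k}$ together with a bounded number of low-$m$ corrections; then, exactly as in the proof of Theorem \ref{Z3genreduce}, a sequence of direct OPE computations (carried out by computer) produces linear and decoupling relations in successive conformal weights that eliminate the higher-weight $a_i$ and the higher-weight products $\nop{b_jb_k}$ in favour of lower-weight generators. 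Since $\left(\V_c^{\otimes 3}\right)^{\mathbb{Z}_3}$ has no generators above weight $10$, every surviving generator above weight $10$ is necessarily such a product. One expects the survivors to be: $\omega$; the even generators in weights $4,6^2,8^2,9$; two generators in weight $10$; $\nop{b_1b_2}$ in weight $11$ (with $b_1$ the weight-$5$ and $b_2$ a weight-$6$ odd field); three weight-$12$ generators, e.g.\ $\nop{b_1b_3}$, $\nop{b_2b_2}$ and $\nop{\partial b_1\, b_2}$; and one weight-$14$ generator such as $\nop{b_1b_5}$ --- giving type $(2,4,6^2,8^2,9,10^2,11,12^3,14)$. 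As a guide and a consistency check, the graded dimension of this strongly generated algebra can be matched term-by-term against ${\rm ch}\left[\left(\V_c^{\otimes 3}\right)^{S_3}\right](\tau)=\tfrac16{\rm ch}[\V_c]^3(\tau)+\tfrac12{\rm ch}[\V_c](2\tau)\,{\rm ch}[\V_c](\tau)+\tfrac13{\rm ch}[\V_c](3\tau)$ from Section 2.

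The main obstacle is the reduction step: one must check that the coefficient matrices appearing at each weight are generically invertible and, more delicately, that the interaction between the quadratic products of the odd cubic fields and the even generators forces no further independent generator in any weight above $14$. As in Theorem \ref{Z3genreduce}, the finitely many exceptional central charges are the common zeros of the relevant determinants --- these include $0,-\tfrac{9}{16},\tfrac{9}{352}(3661\pm\sqrt{12376489})$ inherited from Theorem \ref{Z3genreduce}, together with finitely many new values --- and at those special $c$ one expects, as in the $\mathbb{Z}_3$ case, one or two additional generators in weights between $11$ and $13$, to be handled by a separate case analysis.
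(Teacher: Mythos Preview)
The paper does not actually prove this theorem: it is announced in Section~9 as a result to appear in the sequel \cite{MPS2}, so there is no proof here to compare against. I can therefore only assess your proposal on its own merits.

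Your overall strategy --- realize $\left(\V_c^{\otimes 3}\right)^{S_3}$ as the $\mathbb{Z}_2$-fixed points of $\left(\V_c^{\otimes 3}\right)^{\mathbb{Z}_3}$, split the generators of Theorem~\ref{Z3genreduce} into $\tau$-even $\{a_i\}$ and $\tau$-odd $\{b_j\}$, and then strongly generate the $S_3$-orbifold by the $a_i$ together with quadratic products $\nop{\partial^m b_j\,b_k}$ --- is sound and mirrors the $c=-22/5$ argument in Section~7. Your parity count in each weight ($\tau$-even $2,4,6^2,8^2,9$ and $\tau$-odd $5,6,7,8,9^2$) is correct, and using the character formula as a consistency check is a good idea.

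However, what you have written is a plan, not a proof. Three concrete gaps remain. First, the infinite family $\nop{\partial^m b_j\,b_k}$ must be reduced to a finite list; invoking ``Leibniz identities'' and ``the $\partial$-trick'' does not by itself bound the top surviving weight, and you have given no argument for why nothing survives above weight~$14$. Second, the sentence beginning ``One expects the survivors to be\ldots'' is the entire content of the theorem, and you have simply asserted it: the specific identifications (e.g.\ $\nop{b_1b_2}$ at weight~$11$, three independent generators at weight~$12$, one at~$14$) require explicit OPE computations and determinant checks that you have not performed. Third, your appeal to Weyl's theorem is slightly muddled: you invoke the polarization description of $\mathbb{C}[x_i(m)]^{S_3}$ but then work with the $\mathbb{Z}_2$-invariants of the $\mathbb{Z}_3$-orbifold generators; these two descriptions are compatible, but the passage between them needs to be made precise (the $\mathbb{Z}_3$-orbifold is not a polynomial algebra, so Weyl's theorem does not apply to it directly).

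A cleaner alternative, closer in spirit to Section~2, would be to start directly from the $S_3$-invariant strong generators $u_k(m_1,\ldots,m_k)$ of \eqref{firstorbifoldgenerators} for $k=1,2,3$ and carry out a reduction analogous to the proofs of Theorems~\ref{mainrank2} and~\ref{Z3genreduce}. Either route ultimately requires the same kind of explicit (computer-assisted) relation-finding in weights up through~$14$; your proposal correctly identifies this as the main obstacle but does not carry it out.
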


Denote by $h$ and $h^\vee$ Coxeter and dual Coxeter number, respectively, of a simple Lie algebra $\bar{\frak{g}}$.
Recall \cite{Ar,Wak} that admissible levels for an affine Lie algebra $\frak{g}$ are given by
$k=-h^\vee + \frac{p}{p'}$ where $p,p' \in \mathbb{N}$, $(p,p')=1$ and 
$p \geq h^\vee$ if $(r^\vee,p')=1$ and $p \geq h$ if $(r^\vee,p')=r^\vee$. Let  
$$c(k)=rank(\bar{\frak{g}}) -\frac{12 (p' \bar{\rho} - p \bar{\rho}^\vee, p' \bar{\rho} - p \bar{\rho}^\vee)}{p p'}.$$

We also define non-degenerate admissible levels where in addition:
$p' \geq h$ if $(p',r^\vee)=1$ and $p' \geq r^\vee h^\vee$ if $(p',r^\vee)=r^\vee$. Then the main result in \cite{Ar} reads
\begin{thm}[Arakawa] If $k$ is non-degenerate admissible, then the simple affine $W$-algebra $\mathcal{W}_k(\bar{\frak{g}})$, of 
central charge $c(k)$, is rational and lisse.
\end{thm}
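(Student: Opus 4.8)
The plan is to establish the two assertions --- lisse and rational --- separately, in that order, since rationality of $\mathcal{W}_k(\bar{\mathfrak{g}}):=\mathcal{W}_k(\bar{\mathfrak{g}},f_{\mathrm{prin}})$ is most efficiently deduced once the module category is already known to be finite.

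For lisse-ness I would pass to associated varieties. The general principle for (principal) nilpotent Drinfeld--Sokolov reduction is that the associated variety of $\mathcal{W}_k(\bar{\mathfrak{g}})$ is the scheme-theoretic intersection $X_{L_k(\bar{\mathfrak{g}})}\cap \mathcal{S}_{f_{\mathrm{prin}}}$ of the associated variety of the simple affine vertex algebra $L_k(\bar{\mathfrak{g}})$ with the Slodowy slice to the principal nilpotent orbit. The problem thus reduces to: identify $X_{L_k(\bar{\mathfrak{g}})}$, then intersect with $\mathcal{S}_{f_{\mathrm{prin}}}$. For admissible $k=-h^\vee+p/p'$, $X_{L_k(\bar{\mathfrak{g}})}$ is the closure of a single nilpotent orbit, and the non-degeneracy hypothesis --- which forces the denominator $p'$ to be at least $h$ (resp.\ $r^\vee h^\vee$ in the ramified case $r^\vee\mid p'$) --- is precisely what makes that orbit the regular one, i.e.\ $X_{L_k(\bar{\mathfrak{g}})}=\mathcal{N}$, the full nilpotent cone. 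Since $\mathcal{N}$ meets the principal Slodowy slice in the single point $\{f_{\mathrm{prin}}\}$, we obtain $X_{\mathcal{W}_k(\bar{\mathfrak{g}})}=\{0\}$, hence lisse. Consequently, by Zhu's theory and its refinements, $\mathcal{W}_k(\bar{\mathfrak{g}})$ has only finitely many simple $\mathbb{Z}_{\geq 0}$-graded modules, each of them ordinary, and the category of ordinary modules is equivalent to the category of finite-dimensional modules over the (finite-dimensional) Zhu algebra.

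For rationality, the strategy is to realize all $\mathcal{W}_k(\bar{\mathfrak{g}})$-modules through Drinfeld--Sokolov reduction of admissible $\widehat{\bar{\mathfrak{g}}}$-modules. First, using exactness of the $(-)$-reduction functor $H^0_{f_{\mathrm{prin}},-}(\,\cdot\,)$ on the category $\mathcal{O}_k$ of smooth level-$k$ modules in category $\mathcal{O}$, together with the Frenkel--Kac--Wakimoto character formula for the $L(\lambda)$, one shows that the simple $\mathcal{W}_k(\bar{\mathfrak{g}})$-modules are exactly the nonzero $H^0_{f_{\mathrm{prin}},-}(L(\lambda))$ as $\lambda$ ranges over the finite set of principal admissible weights of level $k$; here the condition $\langle\lambda+\rho,\alpha^\vee\rangle\notin\mathbb{Z}_{>0}$ for all simple $\alpha$ is what detects non-vanishing, and it reappears as the non-degeneracy of $k$. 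This simultaneously yields the central charge $c(k)$ as the standard value attached to the principal reduction. Second, one invokes complete reducibility of the category of ordinary $L_k(\bar{\mathfrak{g}})$-modules at admissible level (the Kac--Wakimoto/Arakawa classification, restricted to the ordinary block) and checks that $H^0_{f_{\mathrm{prin}},-}$ carries this semisimple category onto the category of ordinary $\mathcal{W}_k(\bar{\mathfrak{g}})$-modules, essentially as an equivalence modulo the objects it annihilates; one then concludes that the target category is semisimple, i.e.\ $\mathcal{W}_k(\bar{\mathfrak{g}})$ is rational. The ancillary facts needed to apply the standard rationality packaging --- simplicity, self-duality, and $\mathbb{Z}_{\geq 0}$-gradedness of $\mathcal{W}_k(\bar{\mathfrak{g}})$ --- are routine.

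The main obstacle is the second step of the rationality argument: transporting semisimplicity across the reduction functor. One must show that \emph{every} ordinary $\mathcal{W}_k(\bar{\mathfrak{g}})$-module --- not merely the simple ones --- is of the form $H^0_{f_{\mathrm{prin}},-}(M)$ for a semisimple $M$, and that no non-split self-extensions are created; this requires the vanishing of higher Drinfeld--Sokolov cohomology on the relevant category and a tight homological comparison of blocks, and it is exactly here that non-degeneracy does the essential work, making the principal reduction ``lossless'' on the semisimple part so that the Zhu algebra is semisimple rather than merely finite-dimensional. A secondary but genuinely deep input, which I would cite rather than reprove, is the identification $X_{L_k(\bar{\mathfrak{g}})}=\mathcal{N}$ for non-degenerate admissible $k$, on which the entire lisse step rests.
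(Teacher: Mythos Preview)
The paper does not give its own proof of this theorem: it is stated as a quoted result, attributed to Arakawa and cited from \cite{Ar}, with no argument supplied. There is therefore nothing in the paper to compare your proposal against.

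That said, your outline is a faithful high-level summary of Arakawa's actual strategy in \cite{Ar}. The lisse step is exactly as you describe: associated variety of $L_k(\bar{\mathfrak g})$ equals $\mathcal N$ under the non-degeneracy hypothesis, and intersecting with the principal Slodowy slice gives a point. For rationality, your sketch is morally right but slightly anachronistic in one place: you invoke semisimplicity of the ordinary category for $L_k(\bar{\mathfrak g})$ at admissible level and then push it through the reduction functor, whereas Arakawa's 2015 argument works more directly on the $W$-algebra side, classifying the simple modules as the nonzero images $H^0_{f_{\mathrm{prin}},-}(L(\lambda))$ and then proving the Zhu algebra is semisimple by a direct $\mathrm{Ext}^1$-vanishing computation (using exactness of the ``$-$'' functor and the linkage/block structure of admissible weights). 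The semisimplicity of ordinary $L_k(\bar{\mathfrak g})$-modules you appeal to was established separately and is not logically needed for the result as stated. This is a matter of packaging rather than a gap.
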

The family of non-degenerate admissible central charges parametrized by coprime integers $(p,p')$ are called minimal series, and the modules of the corresponding rational affine $W$-algebras are 
called minimal models.

Consider now $\bar{\frak{g}}=\frak{g}_2$, the exceptional simple Lie algebra of rank $2$. Then $r^\vee=3$, $h^\vee=4$ and $h=6$. Observe that for $p=5$ and $p'=6$ we have $c(k)=-\frac{66}{5}$ and 
$k=-4+\frac{5}{6}=-\frac{19}{6}$. However, $-\frac{19}{6}$ is not 
admissible and $\mathcal{W}_{-\frac{19}{6}}(\frak{g}_2)$ does not belong among minimal series.



In the sequel \cite{MPS2}, we shall prove an isomorphism 
$$(\L_{-22/5}^{\otimes^3})^{S_3} \cong \mathcal{W}_{-\frac{19}{6}}(\frak{g}_2,f_{princ}),$$
using explicit generators of $\mathcal{W}^k(\frak{g}_2,f_{princ})$. This in particular, proves
rationality and $C_2$-cofiniteness (after \cite{CarMiy,Miy}) of an affine $W$-algebra outside the usual rational series.
Moreover, this gives an elegant expression for the character of the affine $W$-algebra (see Section 2).

We also record a related conjecture which is currently beyond our reach.
\begin{conjecture} We have an isomorphism of vertex algebras
$$(\L_{-22/5}^{\otimes^4})^{S_4} \cong \mathcal{W}_{-\frac{49}{6}}(\frak{f}_4,f_{princ}).$$
\end{conjecture}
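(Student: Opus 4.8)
The natural plan is to imitate the identification $(\L_{-22/5}^{\otimes 3})^{S_3}\cong\mathcal{W}_{-19/6}(\frak{g}_2,f_{princ})$ announced for \cite{MPS2}, which itself refines the proofs of Theorems~\ref{c225} and \ref{Z3genreduce}, and to proceed in three stages. First one checks that both sides have central charge $-\tfrac{88}{5}$: the left side because the $4$-fold tensor power of $\L_{-22/5}$ has central charge $4\cdot(-\tfrac{22}{5})$ and passing to $S_4$-invariants does not change it; the right side because $k=-\tfrac{49}{6}=-9+\tfrac56$ corresponds to $(p,p')=(5,6)$ for $\frak{f}_4$ (for which $h^\vee=9$, $h=12$), and the principal $W$-algebra central-charge formula recalled in Section~9 evaluates to $-\tfrac{88}{5}$ there --- exactly paralleling the $\frak{g}_2$ value $-\tfrac{66}{5}$ attached to the same $(p,p')$. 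Since this level is, as for $\frak{g}_2$, non-degenerate and \emph{non}-admissible, rationality of the $W$-algebra will have to be obtained \emph{from} the orbifold rather than from Arakawa's theorem.

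\emph{Stage 1: the type of the orbifold.} Starting from the Rogers--Ramanujan character of $\L_{-22/5}$ and the $S_4$ conjugacy-class formula of Section~2, one writes down ${\rm ch}[(\L_{-22/5}^{\otimes 4})^{S_4}](\tau)$. By Weyl's theorem an initial strong generating set is given by the polarizations of $p_1,p_2,p_3,p_4$ as in \eqref{firstorbifoldgenerators}; the degree-$4$ singular vector $v_i=10L_i(-2)^2\1-6L_i(-4)\1$ of each $\L_{-22/5}$ factor, which vanishes in the simple quotient, then lets one express ``half'' of these generators as descendants of the $v_i$, hence as $0$, exactly as in the proof of Theorem~\ref{c225}. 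Iterating the resulting decoupling relations --- computed with a computer algebra system as in Sections~3 and 6, and possibly organized by realizing $S_4$ as a tower of cyclic extensions (e.g.\ $V_4\triangleleft A_4\triangleleft S_4$) so as to recycle lower-rank output --- should leave strong generators only in conformal weights $2,6,8,12$, and comparing the first Fourier coefficients of the character should show there are no further relations, i.e.\ that $(\L_{-22/5}^{\otimes 4})^{S_4}$ is of type $(2,6,8,12)$. This is exactly the strong generating type of the principal $W$-algebra of $\frak{f}_4$, whose exponents are $1,5,7,11$.

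\emph{Stage 2: matching and simplicity.} Take explicit strong generators $L,W_6,W_8,W_{12}$ of $\mathcal{W}^k(\frak{f}_4,f_{princ})$ together with their OPEs (from a Miura/screening realization or tabulated structure constants), specialized at $k=-\tfrac{49}{6}$, and define a vertex algebra map $\phi\colon\mathcal{W}^{-49/6}(\frak{f}_4,f_{princ})\to(\L_{-22/5}^{\otimes 4})^{S_4}$ sending them to the conformal vector and to suitably normalized weight-$6,8,12$ primaries produced in Stage~1 --- the weight-$6$ normalization fixed, as in Section~5, by matching a single structure constant --- and then verify the defining OPE relations at $c=-\tfrac{88}{5}$. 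Because $\L_{-22/5}$ is simple, rational and $C_2$-cofinite, so is $\L_{-22/5}^{\otimes 4}$, and hence by \cite{CarMiy,Miy} (using solvability of $S_4$) so is the orbifold $(\L_{-22/5}^{\otimes 4})^{S_4}$, which is moreover simple. If one further checks that this orbifold is weakly generated by its weight-$6$ primary --- the analogue of the fact from Section~5 that $W^4_\infty$ weakly generates $\mathcal{W}^{\text{ev}}(c,\lambda)$ --- then $\phi$ is surjective, so it factors through an isomorphism with the unique simple quotient $\mathcal{W}_{-49/6}(\frak{f}_4,f_{princ})$, and rationality and lisseness transfer to this affine $W$-algebra. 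Alternatively, were a universal one- or two-parameter family of type-$(2,6,8,12)$ vertex algebras analogous to the Kanade--Linshaw algebra $\mathcal{W}^{\text{ev}}(c,\lambda)$ available, one would instead compute and intersect the two truncation curves as in Section~5.

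\emph{Main obstacle.} The difficulty is computational rather than conceptual. The $S_4$-reduction now runs through weight $12$ (versus weight $9$ for $S_3$), the genuinely quartic power-sum tower $q_4$ contributes normally ordered products with no $n\le 3$ analogue, and the principal $W$-algebra of the $52$-dimensional Lie algebra $\frak{f}_4$ is a large object whose OPEs among $L,W_6,W_8,W_{12}$ are unwieldy; unlike the even-spin situation of Section~5, there is no off-the-shelf universal algebra of type $(2,6,8,12)$ to lean on. One must also confirm that $-\tfrac{88}{5}$ is not one of the finitely many ``exceptional'' central charges at which extra generators survive --- the role played by $\tfrac{256}{47}$ in Theorem~\ref{mainrank2} and by $-\tfrac{9}{16}$ and $\tfrac{9}{352}(3661\pm\sqrt{12376489})$ in Theorem~\ref{Z3genreduce} --- and get the normalization and structure-constant matching in Stage~2 to close \emph{exactly}, not merely up to the weight at which it is tested. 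It is this last point, combined with the size of the $\frak{f}_4$ side, that currently places the conjecture beyond reach.
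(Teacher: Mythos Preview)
The statement you are addressing is a \emph{conjecture}, and the paper does not prove it: the authors explicitly write that it is ``currently beyond our reach,'' offering as motivation only that (i) both sides have central charge $-\tfrac{88}{5}$ (with $k=-\tfrac{49}{6}$ non-admissible for $\frak{f}_4$) and (ii) computational evidence that $(\L_{-22/5}^{\otimes 4})^{S_4}$ is of type $(2,6,8,12)$, matching the type of the principal $\frak{f}_4$ $W$-algebra. So there is no ``paper's own proof'' to compare against.

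Your sketch is entirely consistent with that motivation and in fact fleshes it out: you reproduce the central-charge match, outline how the singular-vector reductions of Theorem~\ref{c225} would be pushed to $n=4$ to reach type $(2,6,8,12)$, and propose an OPE-matching argument in the spirit of Section~5 to upgrade type agreement to an isomorphism. You also correctly identify the obstruction as computational --- the size of the $\frak{f}_4$ OPEs and the absence of a universal $(2,6,8,12)$ algebra analogous to $\mathcal{W}^{\text{ev}}(c,\lambda)$ --- which is precisely why the paper leaves this as a conjecture. One minor caution: your appeal to \cite{CarMiy,Miy} via ``solvability of $S_4$'' is the right idea, but note that \cite{Miy} covers cyclic orbifolds and \cite{CarMiy} the general finite-group case, so the solvable tower is a convenience for organizing the computation rather than a logical necessity. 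In short, your proposal is a reasonable programme, not a proof, and neither is the paper's treatment.
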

This conjecture if motivated by some computational evidence that $(\L_{-22/5}^{\otimes^4})^{S_4}$ is of type 
 $(2,6,8,12)$, which is the type of the right-hand side.
For the exceptional Lie algebra $\frak{f}_4$, we have $h=12$ and $h^\vee=9$.
Again, using the above formula $c(k)=-\frac{88}{5}$ for $(p,p')=(5,6)$ and $k=-\frac{49}{6}$, which does not belong to admissible series. 

Observe that our Theorem \ref{rank2simple}, part (a) and (e), gives an isomorphism 
\begin{equation}
( \L_{-22/5}^{\otimes^4})^{D_4} \cong (\L_{-44/5}^{\otimes^2})^{\mathbb{Z}_2},
\end{equation}
so the left-hand side is of type $(2,4,6,8)$.

In another recent development, Li, one of the authors, and Wauchope \cite{LMW} proved the following result for the Heisenberg-Virasoro $\mathcal{HV}_{c}$, $N=1$ and $N=2$ superconformal vertex algebras $\V^{N1}_c$, $\V^{N2}_c$.
\begin{thm} We have
\begin{itemize}  
\item[(1)] 
The orbifold ($\mathcal{HV}_{c}^{\otimes 2})^{S_2}$ is 
is of type $(1, 2^2,3,4^3,5)$.
\item[(2)] Generically, the $S_2$-orbifold of $(\V^{N1}_c)^{\otimes 2}$
is of type $(\frac{3}{2},2,\frac{7}{2},4^2,\frac{9}{2},\frac{11}{2},6^2,\frac{13}{2})$.
\item[(3)] Generically, the $S_2$-orbifold of $(\V^{N2}_c)^{\otimes 2}$ is of type 
$(1,{\frac{3}{2}}^2,2^2,{\frac{5}{2}}^2,3^2,{\frac{7}{2}}^4,4^6,{\frac{9}{2}}^4,5^2)$.
\end{itemize}
\end{thm}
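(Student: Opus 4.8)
The proof, given in \cite{LMW}, follows the pattern of the proofs of Theorems \ref{mainrank2} and \ref{Z3genreduce}, carried out separately for each of the three base vertex algebras $X_c\in\{\mathcal{HV}_{c},\V^{N1}_c,\V^{N2}_c\}$. In the relevant normalizations $\mathcal{HV}_{c}$ is strongly and freely generated by a weight-$1$ current $\alpha$ and the Virasoro field $L$; $\V^{N1}_c$ by $L$ and its odd weight-$\tfrac32$ superpartner $G$; and $\V^{N2}_c$ by $L$, a weight-$1$ $U(1)$-current $J$, and two odd weight-$\tfrac32$ fields $G^{\pm}$. The plan is: (i) produce an explicit infinite strong generating set for $(X_c^{\otimes 2})^{S_2}$ from classical (super) invariant theory; (ii) cut it down to the claimed finite list by explicit decoupling relations; (iii) verify minimality against the character formula of Section 2.

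For (i) I would diagonalise the involution $(12)$ on $X_c^{\otimes 2}$ by replacing each strong generator $a$ of $X_c$ with $a^{+}=a\otimes\1+\1\otimes a$ and $a^{-}=a\otimes\1-\1\otimes a$, the swap carrying the Koszul sign on odd elements. By Lemma \ref{reconstruction} applied to ${\rm gr}(X_c^{\otimes 2})$, together with the super-analogue of Weyl's theorem on polarisations for the $S_2$-action, $(X_c^{\otimes 2})^{S_2}$ is strongly generated by the fields $a^{+}$ (one for each generator $a$) together with all normally ordered quadratics $\nop{\partial^{m}a^{-}\,\partial^{n}b^{-}}$ in minus-fields. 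As in \eqref{Wab-rewrite}, the Leibniz rule for normal orderings rewrites each such quadratic as $\pm\nop{\partial^{m+n}a^{-}\,b^{-}}$ modulo lower-weight terms, and one further application of $\partial$ thins the surviving blocks; grading by conformal weight then produces the naive infinite lists whose initial segments are $(1,2^2,3,4^3,5,\dots)$ for $\mathcal{HV}_{c}$, $(\tfrac32,2,\tfrac72,4^2,\tfrac92,\tfrac{11}{2},6^2,\tfrac{13}{2},\dots)$ for $\V^{N1}_c$, and the long list beginning $(1,\tfrac32^2,2^2,\tfrac52^2,3^2,\tfrac72^4,4^6,\dots)$ for $\V^{N2}_c$. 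The point requiring care is the behaviour of self-products of odd minus-fields: for instance $\nop{G^{-}G^{-}}$ collapses to a derivative of $L^{+}$ (and $G\otimes G$ is $S_2$-anti-invariant, hence not a generator), which is why no weight-$3$ generator occurs for $\V^{N1}_c$, whereas $\nop{G^{-}\partial G^{-}}$ is genuinely new of weight $4$, just as $\nop{\psi\partial\psi}$ is for a single free fermion.

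For (ii), at each weight above the claimed truncation one constructs \emph{decoupling relations} from ``commutator-type'' test vectors analogous to $C_1(a),C_2(a)$ in Theorem \ref{mainrank2} and to the $D_i,D_i'$ in Theorem \ref{Z3genreduce}, namely normally ordered differences $\nop{PQ}-\nop{P'Q'}$ of pairs of low-weight generators chosen so that, after repeated use of the Leibniz rule and the OPEs of $X_c$, the top-weight generator survives with coefficient a nonzero rational function of $c$. Solving the resulting finite linear systems --- whose matrices have determinants that are explicit rational functions of $c$, invertible away from a finite set --- expresses every higher-weight generator as a normally ordered polynomial in the lower ones, leaving exactly the generators of the claimed type. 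For $\V^{N1}_c$ and $\V^{N2}_c$ there will, exactly as with $c=\tfrac{256}{47}$ in Theorem \ref{mainrank2}, be sporadic central charges at which a leading coefficient vanishes; this is the source of the word ``generically'' in those two parts, while for $\mathcal{HV}_{c}$ the relevant coefficients never vanish and the statement is unconditional.

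For (iii), compare with the graded-dimension identity ${\rm ch}[(X_c^{\otimes 2})^{S_2}](\tau)=\tfrac12{\rm ch}[X_c]^2(\tau)+\tfrac12{\rm sch}[X_c](2\tau)$, where ${\rm sch}$ is the supercharacter and ${\rm sch}={\rm ch}$ in the bosonic case $\mathcal{HV}_{c}$: one checks that the free (super)commutative vertex algebra on generators of the claimed weights has precisely this graded dimension through the relevant range, so no relations remain among those generators and the truncated set is a minimal strong generating set. The main obstacle is step (ii): the OPE and normal-ordering computations are long and must be done on a computer, and in the two superconformal cases one must in addition control the Koszul signs and carefully track which normally ordered monomials survive the $S_2$-projection once several odd weight-$\tfrac32$ generators are present --- this bookkeeping, rather than any conceptual subtlety, is where the bulk of the work lies.
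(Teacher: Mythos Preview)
The paper does not actually prove this theorem: it is stated in Section~9 (``Future work'') purely as an announcement of a result from \cite{LMW}, with no argument given. There is therefore no proof in the paper to compare your proposal against. Your sketch is a reasonable extrapolation of the methods of Theorems~\ref{mainrank2} and~\ref{Z3genreduce} to the Heisenberg--Virasoro and $N=1,2$ superconformal settings, and the outline (diagonalise the swap, pass to quadratic invariants in the minus-fields via Lemma~\ref{reconstruction} and Weyl polarisation, construct decoupling relations, confirm minimality against the orbifold character) is the natural one; the super character formula you quote, with the supercharacter appearing in the $2\tau$ term, is also correct. But you should be explicit that this is only a plausibility sketch: the actual decoupling computations, the precise list of exceptional central charges in parts~(2) and~(3), and the verification that no exceptional charges occur in part~(1) are all deferred to \cite{LMW} and are not reproduced here.
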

Jointly with Li, we are planning to study $\mathbb{Z}_3$ and $S_3$-orbifolds of these vertex algebras.

\vspace{.3in}

\vspace{.2in}

\end{document}